\def\Box{\vcenter{\vbox{\hrule\hbox{\vrule
     \vbox to 8.8pt{\hbox to 10pt{}\vfill}\vrule}\hrule}}}
\renewenvironment{proof}{{\noindent\it Proof.}\ }{\hfill $\blacksquare$\par\medskip}
\newcommand{\tr}{\textup{Tr}}
\newcommand{\ra}{\rangle}
\newcommand{\la}{\langle}
\newcommand{\F}{{\mathbb F}}
\newcommand{\Z}{{\mathbb Z}}
\newcommand{\cQ}{{\mathcal Q}}
\newcommand{\cP}{{\mathcal P}}
\newcommand{\GU}{\textup{GU}}
\newcommand{\PG}{\textup{PG}}
\newcommand{\PGU}{\textup{PGU}}
\newcommand{\Aut}{\textup{Aut}}
\newcommand{\PGaU}{\textup{P}\Gamma\textup{U}}
\newtheorem{thm}{Theorem}[section]
\newtheorem{lemma}[thm]{Lemma}
\newtheorem{corollary}[thm]{Corollary}
\newtheorem{example}[thm]{Example}
\numberwithin{equation}{section}
\numberwithin{thm}{section}
\newtheorem{remark}[thm]{Remark}
\begin{document}
\newcommand{\stopthm}{\begin{flushright}
\(\box \;\;\;\;\;\;\;\;\;\; \)
\end{flushright}}
\newcommand{\symfont}{\fam \mathfam}

\title{On transitive ovoids of finite Hermitian polar spaces}
\author{Tao Feng \and Weicong Li\thanks{Corresponding author: conglw@zju.edu.cn}}
\date{}
%
\maketitle
\begin{abstract}
In this paper, we complete the classification of transitive ovoids of finite Hermitian polar spaces.
\newline

\noindent\text{Keywords:} transitive ovoids, Hermitian polar spaces, Singer orbits.

\noindent\text{Mathematics Subject Classification(2010)}: 51E20, 05B25, 51A50
\end{abstract}


\section{Introduction}

 An \textit{ovoid} $O$ of a finite classical polar space $\cP$ of rank $r>1$ is a subset of points  of $\cP$ that has exactly one  point in common with each maximal totally singular subspace of $\cP$. The size of an ovoid, if one exists, is called the \textit{ovoid number}, which we denote by $\theta(\cP)$. Alternatively, an ovoid of $\cP$ is a set of $\theta(\cP)$ points of $\cP$ no two of which are perpendicular. There has been extensive research on ovoids due to their close connection with various geometric objects as well as other branches of combinatorics, cf. \cite{Thas1981,Thas2001}. There have been two major generalizations of the concept of ovoids in the past two decades, namely $m$-systems and intriguing sets. Please refer to \cite[Chapter 7]{GGG} and \cite{BambergKLP2007} for more details and the references therein.

Let $V$ be an $(n+1)$-dimensional vector space over $\F_{q^2}$, equipped with a nondegenerate Hermitian form $h:\,V\times V\rightarrow\F_{q^2}$. Let $\PG(V)$ be the projective space associated to $V$, and denote by $\la v\ra$ the projective point that corresponds to the $1$-dimensional vector space spanned by a nonzero vector $v$. The \textit{Hermitian polar space} $H(n,q^2)$ associated to the pair $(V,\,h)$ consists of the totally singular subspaces of $V$ with respect to $h$, with inclusion as incidence relation. The maximal totally singular subspaces of $H(n,q^2)$ have dimension $r=\lfloor (n+1)/2\rfloor$, and $H(n,q^2)$ has rank $r$. In the case $n>2$ is even, $H(n,q^2)$ has no ovoids, cf. \cite{Thas1981}.
In the case $n\ge 3$ is odd,  there are many ovoids in $H(3,q^2)$, but for odd $n$ with $n>3$ little is known about the existence of ovoids in $H(n,q^2)$ besides the results in \cite{BM1995,DeM2005,Moor1996}.  Let $n$ be odd and $O$ be an ovoid of $H(n,q^2)$. Then $O$ has size $q^n+1$. For a point $P$ of $H(n,q^2)$,  we have
\begin{equation}\label{eqn_def}
 |P^{\perp}\cap O|=\Big\lbrace\begin{array}{lcl}
 1,\, &\text{if}& P\in O\\
 q^{n-2}+1,\, &\text{if}& P\not\in O
 \end{array},
\end{equation}
by \cite{BambergKLP2007}. Here, $\perp$ is the polarity associated with $H(n,q^2)$. Moreover, it is clear that $g(O)$ is also an ovoid for each $g \in\PGaU(n+1,q^2)$; we say that $g(O)$ is \textit{projectively equivalent} to $O$.

All the known ovoids of Hermitian polar spaces of rank $r>1$ live in $H(3,q^2)$. A nondegenerate plane section of $H(3,q^2)$ is a \textit{classical} ovoid, and all classical ovoids of $H(3,q^2)$ are projectively equivalent. There is a powerful way to obtain new ovoids from an old ovoid of $H(3,q^2)$, known as \textit{derivation}, due to Payne and Thas \cite{Thas1994Spreads}. To be specific,
given an ovoid $O$ of $H(3,q^2)$ such that there is  a line $\ell$ of $\PG(3,q^2)$ that intersects $O$ in $q+1$ points, let $O'$ be the set obtained from $O$ by removing the points on $\ell$ and then adding the singular points on $\ell^\perp$. Then $O'$ is also an ovoid.  The Singer-type ovoids with $q$ even, which we will describe later, can be obtained from the classical ovoids by multiple derivations.

An ovoid of $H(n,q^2)$ is \textit{transitive} if there is a subgroup $H$ of $\PGaU(n+1,q^2)$ that stabilizes and acts transitively on the points of $O$. If $H$ is a subgroup of $\PG\textup{U}(n+1,q^2)$, then it is said to be \textit{linearly} transitive. In $H(3,q^2)$, the classical ovoids are linearly transitive ovoids, whose full stabilizer has a nonabelian composition factor $\textup{PSU}(3,q^2)$. When $q>2$ is even, Cossidente and Korchm\'{a}ros constructed a new family of transitive ovoids of $H(3,q^2)$, the Singer-type ovoids, based on the cyclic spreads of $H(2,q^2)$ in \cite{BEKS1993}. Such ovoids have a soluble
stabilizer in $\PGaU(4,q^2)$.  In the same paper, they classified the linearly transitive ovoids of $H(3,\,q^2)$ with $q$ even, and the only such ovoids are the classical ovoids and the Singer-type ovoids. The smallest case $q=2$ is due to Brouwer and Wilbrink \cite{Brouwer1990}, who classified all ovoids of $H(3,\,2^2)$.
In 2009, Bamberg and Penttila \cite{Bamberg2009Classificationovoid} classified transitive ovoids of finite classical polar spaces  admitting an insoluble transitive automorphism group, which is based on the deep results in \cite{GPPS1999}. There are no such transitive ovoids in $H(n,q^2)$ with $n>3$. Besides the classical ovoids, the only such ovoids in $H(3,q^2)$ exist when $q=5$. We refer the reader to \cite{Bamberg2009Classificationovoid} for a survey of classification results of transitive ovoids in other classical polar spaces.

In this paper, we give a complete classification of transitive ovoids in $H(3,q^2)$, and show that for a given odd integer $n\ge 5$ there is no transitive ovoid in $H(n,q^2)$. To be specific, we prove the following theorem.
\begin{thm}\label{main}	
Let $q$ be a prime power and $n\ge 3$ be an integer. Suppose that $O$ is a transitive ovoid of $H(n,q^2)$. Then $n=3$, and $O$ is projectively equivalent to one of the following:
\begin{enumerate}
\item[(1)] a classical ovoid $H(2,q^2)$, with full stabilizer $\Gamma \textup{U}(3,q^2)$;
\item[(2)] a Singer-type ovoid for even $q=2^d$, with full stabilizer $\Z_{q^3+1}:\,6d$ if $q>2$;
\item[(3)] an exceptional ovoid of $H(3,5^2)$, with full stabilizer $\Z_2\times (\Z_3\times \textup{PSL}(2,7)):2$;
\item[(4)] an ovoid of $H(3,8^2)$, with full stabilizer $\Z_{57}:9$;
\item[(5)] an ovoid of $H(3,8^2)$, with full stabilizer $\Z_{57}:18$.
\end{enumerate}
\end{thm}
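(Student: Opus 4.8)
The plan is to analyse the transitive group $H\le\PGaU(n+1,q^2)$ according to whether it is soluble, handling the insoluble case by citation. If $H$ is insoluble, the classification of Bamberg and Penttila \cite{Bamberg2009Classificationovoid}, which rests on the primitive prime divisor theorem of \cite{GPPS1999}, applies directly: it forces $n=3$ and leaves only the classical ovoid (case (1)) and the exceptional ovoid of $H(3,5^2)$ (case (3)), with no example for $n\ge 5$. Henceforth we assume $H$ soluble, and it remains to classify soluble transitive ovoids; the assertion is that these are the Singer-type ovoids of \cite{BEKS1993} for even $q$ (case (2)) together with the two ovoids of $H(3,8^2)$ in cases (4)--(5), with nothing for $n\ge 5$ (the classical ovoid may also admit a soluble transitive group for a few tiny $q$, but since it already appears this does no harm).

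The heart of the argument is a structural reduction. As $H$ is transitive on the $q^n+1$ points of $O$, we have $q^n+1\mid|H|$, so $|H|$ is divisible by every prime divisor of the cyclotomic value $\Phi_{2n}(q)$; for $q\ge 3$ Zsigmondy's theorem supplies a primitive prime divisor $p$ of $q^{2n}-1$ (so $p\mid\Phi_{2n}(q)$ and $p>2n$), whence $p\mid|H|$. Since $\ord_p(q^2)=n$, an element of order $p$ of $\PGaU(n+1,q^2)$ fixes a nonsingular point $\la v\ra$ and acts irreducibly on $V_n:=v^{\perp}$, where it lies in a Singer cycle of $\GU(n,q^2)$ of order $q^n+1$. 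Feeding this primitive-prime-divisor element into Aschbacher's subgroup theorem and invoking the solubility of $H$ should eliminate every class except the one in which $H$ preserves the nondegenerate decomposition $V=V_n\perp\la v\ra$ and induces on $V_n$ a soluble irreducible group normalising that Singer cycle; in particular $H$ is conjugate into $N:=N_{\PGaU(n+1,q^2)}(\Z_{q^n+1})$. Here $N/\Z_{q^n+1}$ is metacyclic of order dividing $2nf$, where $q=p_0^{f}$: a cyclic ``relative Weyl'' part of order $n$ extended by the group $\Z_{2f}$ of field automorphisms of $\F_{q^2}$; for $n=3$ and $q=2^{d}$ one recovers $N\cong\Z_{q^3+1}{:}6d$, which is exactly the stabiliser in case (2).

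It then remains to run over the soluble subgroups $H\le N$ with $q^n+1\mid|H|$, list their orbits on the points of $H(n,q^2)$, and decide which orbits of size $q^n+1$ are ovoids. The efficient test is \eqref{eqn_def}: an $H$-orbit $O$ of size $q^n+1$ is an ovoid precisely when $|P^{\perp}\cap O|\le q^{n-2}+1$ for one point $P\notin O$; since the orbits of $\Z_{q^n+1}$ on the Hermitian points are explicit, this reduces to an arithmetic condition on $q$ modulo small integers, refined by the action of the metacyclic complement. For $n\ge 5$ the condition is never met, so no transitive ovoid exists; for $n=3$ it is met exactly by the Singer-type ovoids for even $q$ (case (2)) and by the orbits of $\Z_{57}{:}9$ and $\Z_{57}{:}18$ in $H(3,8^2)$, where $\Z_{57}$ is the index-$9$ (equivalently the $\Phi_6(8)$-)subgroup of $\Z_{513}=\Z_{q^3+1}$ (cases (4) and (5)). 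The full stabilisers follow by checking that no overgroup inside $\PGaU(n+1,q^2)$ fixes the orbit, and the only Zsigmondy-exceptional pair, $q=2$ with $n=3$, is settled by the complete list of ovoids of $H(3,2^2)$ in \cite{Brouwer1990} (for $q=2$ and $n\ge 5$ the quantity $q^{2n}-1$ still has a primitive prime divisor, so the general argument applies).

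I expect the main obstacle to be the middle step: eliminating all Aschbacher classes other than the Singer normaliser --- especially the subfield, field-extension and imprimitive classes, where mere divisibility of $|H|$ is not enough and one must use the exact degree $q^n+1$ together with solubility --- and then, inside $N$, carrying out the orbit-and-perpendicularity bookkeeping carefully enough both to establish non-existence for $n\ge 5$ and to separate the genuine ovoids from the near-misses, which is what produces the two sporadic examples at $q=8$.
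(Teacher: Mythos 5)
Your overall architecture matches the paper's: dispose of the insoluble case by \cite{Bamberg2009Classificationovoid}, and in the soluble case use a primitive prime divisor of $p^{2nd}-1$ to force $H$ (up to conjugacy) into the normalizer $N\cong\Z_{q^n+1}{:}2nd$ of a Singer cycle acting on the perp of a fixed nonsingular point --- this is exactly what the paper gets from \cite[Theorem 4.2]{BT2008Overgroup}, so your ``middle step'' is fine in principle. The genuine gap is in the last step, which you compress into ``list the orbits, test \eqref{eqn_def}, and the condition reduces to an arithmetic condition on $q$ modulo small integers, never met for $n\ge 5$.'' This does not engage with the actual difficulty. A soluble transitive $H\le N$ with $q^n+1\mid |H|$ need \emph{not} contain the cyclic group $\Z_{q^n+1}$: writing $H\cap\Z_{q^n+1}=\la\rho^s\ra$, the ovoid is in general a union of $s$ partial Singer orbits twisted by field automorphisms, so ``the orbits of $\Z_{q^n+1}$ are explicit'' settles only the case $s=1$. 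The case $s>1$ is where almost all of the paper's work lies: one must average over the multiset $\bigcup_i\rho^i(O)$, use a parity argument on intersections with points $\la(0,t)\ra$ (each such intersection being a multiple of $q+1$) to extract trace conditions, and convert these into a \emph{lower} bound on $s$ (roughly $s\ge q$, or $s\ge (q+1)/2$ when $n=3$, via covering a parabolic quadric by hyperplane sections, or a pigeonhole argument in characteristic $2$), which is then played against the \emph{upper} bound $s\mid\gcd(2nd,q^n+1)$. Nothing in your proposal produces this tension, and without it the ``bookkeeping'' is not finite, let alone an arithmetic condition mod small integers.

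Moreover, even granting the reduction to $N$, your plan has no mechanism for unbounded $n$: for fixed small $p$ (say $p=2,3$) and large odd $n$ the divisor bound $s\mid 2nd$ is weak, and the paper cannot and does not eliminate $n\ge 5$ by orbit analysis alone. It needs the Blokhuis--Moorhouse $p$-rank bound \cite{Moor1996} (Theorem \ref{thm_moo}), together with the monotonicity/Stirling estimates of Lemma \ref{lem_Fprop}, to bound $n$ in terms of $p$ (Table 1); it needs a Kloosterman-sum estimate (Lemma \ref{lem_omegai}) even to show that the \emph{full} Singer orbit $S_1$ fails to be an ovoid for $n\ge 5$, $q$ even; and it still has to finish a residual finite list (including $H(5,2^4)$, $H(5,3^4)$, and $q\in\{3,5,11\}$, $q=8$ for $n=3$) by computer search. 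Your proposal omits all of these ingredients, so as written the claims ``for $n\ge 5$ the condition is never met'' and ``for $n=3$ it is met exactly by the listed examples'' are assertions of the theorem rather than steps of a proof; identifying the lower-bound-on-$s$ versus $s\mid 2nd$ mechanism and importing the $p$-rank bound are the missing ideas you would need to make the plan work.
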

\begin{remark}
Please refer to \cite{Bamberg2009Classificationovoid} for a description of the exceptional ovoid in $H(3,5^2)$, which is based on a unital spread of $H(2,5^2)$ discovered in \cite{CT2005}. For a description of the Singer-type ovoids and the two exceptional ovoids in $H(3,8^2)$, please refer to Example \ref{rem_Singer} and Example \ref{rem_q8} respectively.
\end{remark}

This paper is organized as follows. In Section 2, we first present some preliminary results. In Section 3, we introduce the model of $H(n,q^2)$, $n$ odd, that we shall use in this paper. We study the orbits of certain Singer groups of order $q^n+1$ and determine when they form ovoids of $H(n,q^2)$. In particular, in the case $n=3$ and $q$ is even we give an algebraic description of the Singer-type ovoids. In Section 4, we first derive some restrictions on the parameters of a transitive ovoid with a soluble stabilizer in $\PGaU(n+1,q^2)$, and then give the proof of Theorem \ref{main}.

\section{Preliminaries}

\subsection{Some technical lemmas}
Throughout this paper, let $p$ be a prime, $d$ be a positive integer and $n$ be an odd positive integer greater than or equal to $3$. Set $q=p^d$, take $\omega_0$ to be a fixed element of order $(q^n+1)(q-1)$ in $\F_{q^{2n}}$, and write $\omega=\omega_0^{q-1}$. If $E$ is a field extension of $F$ of finite degree, we use $\tr_{E/F}$ for the relative trace function from $E$ to $F$.

\begin{lemma}\label{lem_xomega}
Take notation as above, and set $W:=\F_{q^n}^*\cdot \la \omega\ra$. Then $\F_{q^{2n}}^*=W$ if $q$ is even, and $\F_{q^{2n}}^*=W\cup W\omega_0$ if $q$ is odd.
\end{lemma}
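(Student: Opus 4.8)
The plan is to carry out the whole argument inside the cyclic group $\F_{q^{2n}}^*$, which has order $q^{2n}-1=(q^n-1)(q^n+1)$, by comparing the orders of the subgroups involved. First I would note that $\F_{q^n}^*$ is the unique subgroup of $\F_{q^{2n}}^*$ of order $q^n-1$, and that $\la\omega\ra$ is the unique subgroup of order $q^n+1$: since $\omega_0$ has order $(q^n+1)(q-1)$, its power $\omega=\omega_0^{q-1}$ has order $(q^n+1)(q-1)/\gcd\bigl(q-1,(q^n+1)(q-1)\bigr)=q^n+1$. As $\F_{q^{2n}}^*$ is abelian, $W=\F_{q^n}^*\cdot\la\omega\ra$ is a subgroup, and in a cyclic group the two subgroups of orders $q^n-1$ and $q^n+1$ intersect in the subgroup of order $\gcd(q^n-1,q^n+1)=\gcd(q^n-1,2)$.

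Next I would split on the parity of $q$. If $q$ is even, then $q^n-1$ is odd, so the intersection above is trivial and $|W|=(q^n-1)(q^n+1)=q^{2n}-1$; hence $W=\F_{q^{2n}}^*$. If $q$ is odd, then $q^n-1$ is even, the intersection has order $2$, and $|W|=(q^n-1)(q^n+1)/2=(q^{2n}-1)/2$, so $[\F_{q^{2n}}^*:W]=2$. It then suffices to show $\omega_0\notin W$, because then $\F_{q^{2n}}^*$ is the disjoint union of the two cosets $W$ and $W\omega_0$.

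To show $\omega_0\notin W$ when $q$ is odd, I would argue by contradiction: if $\omega_0\in W$, then $\la\omega_0\ra\subseteq W$, so its order $(q^n+1)(q-1)$ divides $|W|=(q^{2n}-1)/2$. But
\[
\frac{(q^{2n}-1)/2}{(q^n+1)(q-1)}=\frac{1}{2}\cdot\frac{q^n-1}{q-1}=\frac{1}{2}\bigl(1+q+\cdots+q^{n-1}\bigr),
\]
and $1+q+\cdots+q^{n-1}$ is a sum of $n$ odd numbers with $n$ odd, hence odd, so the quotient is not an integer --- a contradiction. This parity observation is really the only place where the hypothesis that $n$ is odd is used (for even $n$ the quotient above is an integer and the asserted description of $\F_{q^{2n}}^*$ would fail), and it is the crux of the argument; everything else is routine subgroup-order bookkeeping in a cyclic group.
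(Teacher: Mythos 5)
Your proof is correct and follows essentially the same route as the paper: both arguments use $\gcd(q^n-1,q^n+1)$ to show $W$ is all of $\F_{q^{2n}}^*$ for $q$ even and has index $2$ for $q$ odd, and both exclude $\omega_0\in W$ via the oddness of $1+q+\cdots+q^{n-1}$ (the paper phrases this as $\omega_0$ being a nonsquare, you phrase it as an order-divisibility contradiction, which is the same parity fact).
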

\begin{proof}
In the case $q$ is odd, $\gcd(q^n+1,q^n-1)=2$, so $W$ is a multiplicative subgroup of index $2$ in $\F_{q^{2n}}^*$; moreover, $\omega_0$ is a nonsquare of $\F_{q^{2n}}^*$ since $\frac{q^{2n}-1}{(q^n+1)(q-1)}=q^{n-1}+q^{n-2}+\cdots+1$ is odd. The claim now follows readily. In the case $q$ is even, the claim follows from the fact $\gcd(q^n+1,q^n-1)=1$.
\end{proof}
\medskip

\begin{lemma}\label{lem_wiproperty}
Take notation as above, and assume that $n=3$. Let $x$ be an element of $\la \omega\ra\setminus\F_{q^2}$. Then $\tr_{\F_{q^6}/\F_{q^2}}(x) \ne 1$.
\end{lemma}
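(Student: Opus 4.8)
The plan is to argue by contradiction: assume $\tr_{\F_{q^6}/\F_{q^2}}(x)=x+x^{q^2}+x^{q^4}=1$ and derive $x\in\F_{q^2}$, contradicting the hypothesis. The only structural input used is that $x\in\la\omega\ra$ forces $x^{q^3+1}=1$, i.e. $x^{q^3}=x^{-1}$; consequently $x^{q^4}=x^{-q}$, $x^{q^5}=x^{-q^2}$ and $x^{q^7}=x^{q}$. In other words $x$ is a norm-one element for $\F_{q^6}/\F_{q^3}$, and that is all we need.

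First I would extract two polynomial relations from the assumed equation. Rewriting it as $x+x^{q^2}+x^{-q}=1$ and multiplying by $x^q$ gives $x^{q+1}+x^{q^2+q}+1=x^q$. Applying the Frobenius $t\mapsto t^{q^3}$ to the assumed equation (its right-hand side $1$ being fixed) yields $x^{-1}+x^{-q^2}+x^{q}=1$, and multiplying by $x$ gives $1+x^{1-q^2}+x^{q+1}=x$. Subtracting the first relation from the second, the term $x^{q+1}$ cancels, and using the identities $x^{1-q^2}-x=-x^{1-q^2}(x^{q^2}-1)$ and $x^{q^2+q}-x^{q}=x^{q}(x^{q^2}-1)$ the difference collapses to the factored form $(x^{q^2}-1)(x^{q}+x^{1-q^2})=0$.

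Finally I would treat the two cases. If $x^{q^2}=1$, then iterating $t\mapsto t^{q^2}$ gives $x=x^{q^6}=1\in\F_{q^2}$. If instead $x^{q}=-x^{1-q^2}$, then $x^{q^2+q-1}=-1$, hence $\ord(x)\mid 2(q^2+q-1)$; combined with $\ord(x)\mid q^3+1$ and the elementary fact $\gcd(q^3+1,q^2+q-1)=1$ — which follows from $q^3+1\equiv 2q\pmod{q^2+q-1}$ together with $q^2+q-1$ being odd and coprime to $q$ — this forces $\ord(x)\mid 2$, so $x=\pm1\in\F_{q^2}$. In either case $x\in\F_{q^2}$, the desired contradiction, so $\tr_{\F_{q^6}/\F_{q^2}}(x)\ne1$.

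There is no conceptual obstacle here; the computation is short. The only point demanding care is the bookkeeping of exponents in the subtraction step, where one must consistently reduce high powers of $x$ via $x^{q^3}=x^{-1}$ so that the two relations' difference simplifies to the displayed product. Note also that the case $q$ even is covered by the same argument: then $-1=1$, so $x^{q^2+q-1}=1$, and $\gcd(q^3+1,q^2+q-1)=1$ together with $\ord(x)\mid q^3+1$ already force $x=1$.
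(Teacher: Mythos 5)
Your argument is correct. The two relations you extract are right (subtracting $x^{q+1}+x^{q^2+q}+1=x^q$ from $1+x^{1-q^2}+x^{q+1}=x$ does collapse to $(x^{q^2}-1)(x^{q}+x^{1-q^2})=0$), and both factors are eliminated soundly: $x^{q^2}=1$ forces $x=x^{q^6}=1$, and $x^{q^2+q-1}=-1$ gives $\ord(x)\mid\gcd\bigl(2(q^2+q-1),\,q^3+1\bigr)\mid 2$, using $q^3+1\equiv 2q\pmod{q^2+q-1}$ together with $q^2+q-1$ being odd and prime to $q$; in every case $x\in\F_{q^2}$, the desired contradiction. This is, however, a genuinely different route from the paper's. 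The paper sets $s:=x^{1+q^2+q^4}\in\F_{q^2}^*$, computes $\tr_{\F_{q^6}/\F_{q^2}}(x^{q^2+q^4})=s\,\tr_{\F_{q^6}/\F_{q^2}}(x^{q^3})=s$, and concludes that under the hypothesis the minimal polynomial of $x$ over $\F_{q^2}$ would be $X^3-X^2+sX-s=(X-1)(X^2+s)$, which is reducible --- an immediate contradiction since $x\notin\F_{q^2}$ means its minimal polynomial is an irreducible cubic. That version needs no case split and no gcd computation; yours trades the symmetric-function step for bare-hands elimination plus an order argument, at the cost of the extra fact $\gcd(q^3+1,q^2+q-1)=1$. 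The two proofs are closely parallel underneath: your cases correspond exactly to the paper's factors, since $x=1$ matches $X-1$, while $x^{q^2+q-1}=-1$ is, after applying the $q$-th power Frobenius and using $x^{q^3}=x^{-1}$, the relation $x^2=-s$ coming from $X^2+s$. Your closing observation that the even-$q$ case is subsumed (with $-1=1$) is also fine.
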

\begin{proof}
Suppose to the contrary that $x$ is an element of $\la \omega\ra\setminus\F_{q^2}$ such that $\tr_{\F_{q^6}/\F_{q^2}}(x)=1$. Write $s:=x^{1+q^2+q^4}$, which lies in $\F_{q^2}^*$. We compute that
\begin{align*} \tr_{\F_{q^6}/\F_{q^2}}(x^{q^2+q^4})=s\tr_{\F_{q^6}/\F_{q^2}}(x^{-1})=s\tr_{\F_{q^6}/\F_{q^2}}(x^{q^3})=s.
\end{align*}
Since $x\not\in\F_{q^2}$, the minimal polynomial of $x$ over $\F_{q^2}$ is
\begin{align*} (X-x)(X-x^{q^2})(X-x^{q^4})&=X^3-\tr_{\F_{q^6}/\F_{q^2}}(x)X^2+\tr_{\F_{q^6}/\F_{q^2}}(x^{q^2+q^4})X-s\\
	&=X^3-X^2+sX-s.
\end{align*}
However, this polynomial factors as $(X-1)(X^2+s)$: a contradiction.
This completes the proof.
\end{proof}
\medskip

\begin{lemma}\label{lem_ind}
Suppose that $\gcd(n,e)=1$. Then $\tr_{\F_{q^{ne}}/\F_{q^e}}(x)=\tr_{\F_{q^{n}}/\F_q}(x)$ for $x\in\F_{q^n}$, and a basis  of $\F_{q^e}$ over $\F_q$ is also a basis of $\F_{q^{ne}}$ over $\F_{q^n}$.
\end{lemma}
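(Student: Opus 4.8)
The plan is to prove the two assertions separately, starting with the trace identity, which is pure bookkeeping, and then deducing the statement about bases from a dimension count. For the trace identity, I would expand $\tr_{\F_{q^{ne}}/\F_{q^e}}(x)=\sum_{i=0}^{n-1}x^{q^{ei}}$, using that $\F_{q^{ne}}/\F_{q^e}$ is a Galois extension of degree $n$ whose Galois group is cyclic, generated by the $q^e$-power Frobenius. For $x\in\F_{q^n}$ one has $x^{q^n}=x$, so in the exponent $q^{ei}$ the index $ei$ may be reduced modulo $n$; that is, $x^{q^{ei}}$ depends only on the residue of $ei$ modulo $n$. Since $\gcd(n,e)=1$, multiplication by $e$ is a bijection of $\Z/n\Z$, so as $i$ runs through $\{0,1,\dots,n-1\}$ the residues $ei\bmod n$ run through $\{0,1,\dots,n-1\}$, each exactly once. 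Hence $\sum_{i=0}^{n-1}x^{q^{ei}}=\sum_{j=0}^{n-1}x^{q^{j}}=\tr_{\F_{q^n}/\F_q}(x)$, which lies in $\F_q\subseteq\F_{q^e}$, as it should.

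For the statement on bases, let $B=\{b_1,\dots,b_e\}$ be an $\F_q$-basis of $\F_{q^e}$. Because $\gcd(n,e)=1$, the degree $[\F_{q^{ne}}:\F_{q^n}]$ equals $e=|B|$, so it suffices to prove that $B$ spans $\F_{q^{ne}}$ over $\F_{q^n}$: a spanning set of cardinality equal to the dimension is automatically a basis. Set $M:=\mathrm{span}_{\F_{q^n}}B$. Then $M$ contains the $\F_q$-span of $B$, which is $\F_{q^e}$, and $M$ contains $\F_{q^n}$. Moreover, for any $\alpha\in\F_{q^e}$, writing $\alpha=\sum_i d_i b_i$ with $d_i\in\F_q$, and any $\beta\in\F_{q^n}$, we have $\beta\alpha=\sum_i(\beta d_i)b_i\in M$ since each $\beta d_i\in\F_{q^n}$. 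Therefore $M$ contains every product $\alpha\beta$ with $\alpha\in\F_{q^e}$ and $\beta\in\F_{q^n}$, hence the subring of $\F_{q^{ne}}$ generated by $\F_{q^e}\cup\F_{q^n}$. Being a finite integral domain, this subring is a field, so it coincides with the subfield generated by $\F_{q^e}$ and $\F_{q^n}$, namely $\F_{q^{\mathrm{lcm}(e,n)}}=\F_{q^{ne}}$ (using $\gcd(e,n)=1$ again). Thus $M=\F_{q^{ne}}$, as needed.

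I do not expect a genuine obstacle: the lemma is elementary. The only things to watch are where coprimality of $n$ and $e$ is actually used — once for the bijectivity of multiplication by $e$ on $\Z/n\Z$ in the trace computation, and once to get $\mathrm{lcm}(e,n)=ne$ so that the compositum of $\F_{q^e}$ and $\F_{q^n}$ fills out all of $\F_{q^{ne}}$ — together with the small logical point that in the second part one should reduce to proving spanning (rather than linear independence) and then invoke the equality of cardinalities.
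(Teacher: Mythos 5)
Your proof is correct, and the trace identity is handled exactly as in the paper: reduce exponents modulo $n$ via $x^{q^n}=x$ and use that $i\mapsto ei$ permutes $\Z/n\Z$ because $\gcd(n,e)=1$. For the basis statement, however, you take a genuinely different route. The paper proves \emph{linear independence}: given $\sum_{i}c_i\zeta_i=0$ with $c_i\in\F_{q^n}$, it multiplies by an arbitrary $x\in\F_{q^n}$, applies $\tr_{\F_{q^{ne}}/\F_{q^e}}$, and uses the trace identity just proved to get $\sum_i\tr_{\F_{q^n}/\F_q}(c_ix)\zeta_i=0$, whence $\tr_{\F_{q^n}/\F_q}(c_ix)=0$ for all $x$ and so $c_i=0$ by nondegeneracy of the trace; since $[\F_{q^{ne}}:\F_{q^n}]=e$, the $e$ independent elements form a basis. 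You instead prove \emph{spanning}: the $\F_{q^n}$-span of the $\F_q$-basis of $\F_{q^e}$ contains every product $\alpha\beta$ with $\alpha\in\F_{q^e}$, $\beta\in\F_{q^n}$, hence the subring generated by the two subfields, which, being a finite integral domain, is the compositum $\F_{q^{\operatorname{lcm}(e,n)}}=\F_{q^{ne}}$; a spanning set of cardinality $e=[\F_{q^{ne}}:\F_{q^n}]$ is then a basis. The paper's argument has the feature that the second claim is deduced from the first (which is why the lemma bundles them) and rests only on nondegeneracy of the trace form; yours is independent of the first claim and leans on the lattice of finite subfields (coprimality giving $\operatorname{lcm}(e,n)=ne$), at the cost of the short compositum/finite-integral-domain detour. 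Both arguments are elementary and complete.
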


\begin{proof}
Take $x\in\F_{q^n}$, so that $x^{q^n}=x$. Since $\gcd(n,e)=1$, we have $\{ie\pmod{n}:\,0\le i\le n-1\}=\{0,\cdots,n-1\}$, so
\begin{align*}
\tr_{\F_{q^{ne}}/\F_{q^e}}(x)&=x+x^{q^e}+\cdots+x^{q^{e(n-1)}}\\
&=x+x^q+\cdots+x^{q^{n-1}}=\tr_{\F_{q^{n}}/\F_q}(x).
\end{align*}

Let $\zeta_1,\cdots,\zeta_e$ be a basis of $\F_{q^e}$ over $\F_q$. Suppose that  $\sum_{i=1}^ec_i\zeta_i=0$ for some elements $c_i\in\F_{q^n}$. For any  $x\in\F_{q^n}$, we have
\begin{align*}
0&=\tr_{\F_{q^{ne}}/\F_{q^e}}\left(x\cdot\sum_{i=1}^ec_i\zeta_i\right)
=\sum_{i=1}^e\tr_{\F_{q^{n}}/\F_{q}}(c_ix)\zeta_i
\end{align*}
by the previous claim. Since the $\zeta_i$'s form a basis over $\F_{q}$, it follows that $\tr_{\F_{q^{n}}/\F_{q}}(c_ix)=0$ for each $i$. This holds for any $x\in\F_{q^n}$, so we have $c_i=0$ for each $i$. This proves the second claim.
\end{proof}

\medskip

\subsection{The bound of Blokhuis and Moorhouse}
For an positive integer $n$ and a prime $p$, define
\begin{equation}\label{eqn_F}
F(n,p):=\frac{1}{p^n}\binom{n+p-1}{n}^2-\frac{1}{p^n}\binom{n+p-2}{n}^2.
\end{equation}
\begin{thm}\cite[Corollary 1.3]{Moor1996}\label{thm_moo}
Let $q=p^d$ be a prime power with $p$ prime and $n$ be an odd integer. If $H(n,q^2)$ contains an ovoid, then $F(n,p)\ge1$.
\end{thm}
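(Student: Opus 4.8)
The plan is to carry out the Blokhuis--Moorhouse $p$-rank argument. Coordinatize the point set $U$ of $H(n,q^2)$ by nonzero vectors $v$ with $h(v,v)=\sum_i v_i^{q+1}=0$, and let $M=(m_{P,Q})_{P,Q\in U}$ be the tangency matrix over $\F_p$, with $m_{P,Q}=1$ if $P\in Q^\perp$ and $0$ otherwise. The first input is a lower bound for $\mathrm{rank}_{\F_p}(M)$: if $O$ is an ovoid then no two of its points are perpendicular, so for $P\in O$ we have $P^\perp\cap O=\{P\}$, whence the row of $M$ indexed by $P$, restricted to the coordinates in $O$, is the standard basis vector $e_P$. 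The $|O|$ rows of $M$ indexed by the points of $O$ are therefore linearly independent, so $\mathrm{rank}_{\F_p}(M)\ge|O|=q^n+1$. (One also has, from \eqref{eqn_def} together with $p\mid q$ and $n-2\ge1$, that $M\chi_O=\mathbf 1$ over $\F_p$, with $\mathbf 1$ the all-ones vector.)

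The second, and decisive, input is an exact evaluation of $\mathrm{rank}_{\F_p}(M)$ --- the content of a $p$-rank computation for Hermitian varieties due to Moorhouse, patterned on the orthogonal case of Blokhuis and Moorhouse. Using $[x=0]=1-x^{q^2-1}$ on $\F_{q^2}$, write $m_{P,Q}=1-h(v_P,v_Q)^{q^2-1}$, so $M=J-M'$ with $M'=\bigl(h(v_P,v_Q)^{q^2-1}\bigr)$ and $\mathrm{rank}_{\F_p}(M)\le 1+\mathrm{rank}_{\F_p}(M')$; here $\mathrm{rank}_{\F_p}(M')$ equals the $\F_p$-dimension of the space spanned, as functions on $U$, by the monomials in $v$ occurring in the expansion of $h(v,w)^{q^2-1}=\bigl(\sum_i v_iw_i^{\,q}\bigr)^{q^2-1}$. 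Expanding via the characteristic-$p$ multinomial theorem, the surviving terms are parametrized by $2d$ ``digit vectors'' --- one per base-$p$ digit of $q^2-1=p^{2d}-1$ --- each an $(n+1)$-tuple of digits in $\{0,\dots,p-1\}$ summing to $p-1$; one then reduces the resulting monomials modulo the Frobenius relations $v_i^{q^2}=v_i$ and modulo the defining equation $\sum_i v_i^{q+1}=0$ of $U$. Because conjugation interchanges the first $d$ digit slots with the last $d$, and because $q+1=p^d+1$ couples slot $j$ with slot $j+d$, the count splits into $d$ independent blocks, each of which, after the reduction modulo the equation of the variety, contributes a factor $\binom{n+p-1}{n}^2-\binom{n+p-2}{n}^2=p^{\,n}F(n,p)$; hence $\mathrm{rank}_{\F_p}(M)=1+\bigl(p^{\,n}F(n,p)\bigr)^{d}$.

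Combining the two inputs, the existence of an ovoid forces $1+\bigl(p^{\,n}F(n,p)\bigr)^{d}\ge q^n+1=p^{\,nd}+1$, so $\bigl(p^{\,n}F(n,p)\bigr)^{d}\ge(p^{\,n})^{d}$; taking $d$-th roots (both sides being positive) yields $p^{\,n}F(n,p)\ge p^{\,n}$, that is, $F(n,p)\ge1$. I expect the only genuine difficulty to be the monomial count in the middle step: one must prove that the reduction modulo $v_i^{q^2}=v_i$ and $\sum_i v_i^{q+1}=0$ removes precisely a $\binom{n+p-2}{n}^2$ worth of dimensions from each of the $d$ blocks and creates no further dependencies, which requires careful bookkeeping of base-$p$ digits and carries in the ring $\F_{q^2}[v_0,\dots,v_n]$ modulo the stated relations. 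The passage from ``ovoid'' to ``$\mathrm{rank}_{\F_p}(M)\ge q^n+1$'' and the final arithmetic are routine by comparison.
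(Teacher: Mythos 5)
The paper does not actually prove this statement: it is quoted from Moorhouse \cite[Corollary 1.3]{Moor1996}, so what you are reconstructing is Moorhouse's own $p$-rank argument, and your skeleton is indeed his. The lower bound is fine: for $P,Q\in O$ distinct we have $m_{P,Q}=0$ and $m_{P,P}=1$ (singular points lie in their own perp), so the rows indexed by $O$ contain an identity submatrix and $\mathrm{rank}_{\F_p}(M)\ge |O|=q^n+1$. The final arithmetic, $(p^nF(n,p))^d\ge p^{nd}$ forcing $F(n,p)\ge 1$, is also correct. So the overall route is the right one and matches the cited source.

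The genuine gap is the one you flag yourself: the decisive inequality $\mathrm{rank}_{\F_p}(M')\le\bigl(\binom{n+p-1}{n}^2-\binom{n+p-2}{n}^2\bigr)^d$ is asserted, not proved, and it is essentially the entire technical content of Moorhouse's paper. Expanding $h(v,w)^{q^2-1}=\prod_{j=0}^{2d-1}\bigl(\sum_i v_i^{p^j}w_i^{p^{j+d}}\bigr)^{p-1}$ only shows that the row space of $M'$ lies in the span of the resulting monomial functions on $U$; to get the stated count one must show that, using the relations $w_i^{q^2}=w_i$ and the Frobenius twists $\sum_i w_i^{p^j+p^{j+d}}=0$ of the defining equation, every monomial divisible by some $w_{i}^{p^j+p^{j+d}}$ can be rewritten inside the reduced set. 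The substitution reintroduces monomials that may again be divisible by such powers (in other variables or other digit pairs), so a terminating rewriting argument (a monomial ordering and induction) is required; without it the upper bound is unsupported. Two smaller corrections: only an upper bound on the rank is needed, so your claims that $\mathrm{rank}_{\F_p}(M')$ \emph{equals} the dimension of the monomial span and that $\mathrm{rank}_{\F_p}(M)=1+(p^nF(n,p))^d$ exactly are overreach (exactness would need a separate lower-bound argument and the ``no further dependencies'' bookkeeping, none of which your inequality actually uses); and the span computation should be run over $\F_{q^2}$, which is harmless because $M$ and $M'$ are $0$--$1$ matrices, whose rank is unchanged under field extension of $\F_p$.
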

Since $\binom{n+p-1}{n}=\frac{n+p-1}{n}\binom{n+p-2}{n-1}$, $\binom{n+p-2}{n}=\frac{p-1}{n}\binom{n+p-2}{n-1}$,  we have
\[
F(n,p)=\binom{n+p-2}{n-1}^2 \frac{n+2p-2}{np^n}.
\]
We have the following \textit{Stirling's approximation} \cite{Stirling1955}:
\begin{equation}\label{eqn_stirling}
	n!=(ne^{-1})^n\sqrt{2\pi n}e^{\alpha_n},\quad \frac{1}{12n+1}< \alpha_n< \frac{1}{12n}.
\end{equation}
\begin{lemma}\label{lem_Fprop}
Let $n$ be a positive integer and $p$ be a prime. Let $F(n,p)$ be the function defined in  \eqref{eqn_F}.
\begin{enumerate}
\item[(1)] $F(n+1,p)<F(n,p)$ if either $p>3$ and $n\geq (p+1)/2$, or $p\le 3$ and $n\ge p+1$;
\item[(2)] For a prime $p>45$, we have $F((p+1)/2,p)<1$.
\end{enumerate}
\end{lemma}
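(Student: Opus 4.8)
The plan is to treat both parts by elementary but careful estimation, reducing each to a single polynomial inequality. For part~(1), I would work with the ratio $F(n+1,p)/F(n,p)$ rather than with the two values separately. Using the closed form $F(n,p)=\binom{n+p-2}{n-1}^2\frac{n+2p-2}{np^n}$ displayed above together with $\binom{n+p-1}{n}=\frac{n+p-1}{n}\binom{n+p-2}{n-1}$, the binomial factors telescope and one finds
\[
\frac{F(n+1,p)}{F(n,p)}=\frac{(n+p-1)^2\,(n+2p-1)}{n\,p\,(n+1)\,(n+2p-2)}.
\]
Since $F(n,p)>0$, proving $F(n+1,p)<F(n,p)$ is equivalent to $np(n+1)(n+2p-2)-(n+p-1)^2(n+2p-1)>0$. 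Expanding both cubics in $n$, a direct computation shows the difference factors as $(p-1)\,g(n)$ with
\[
g(n)=n^3+(2p-3)n^2-3(p-1)n-(p-1)(2p-1),
\]
so (as $p\ge 2$) the whole of part~(1) reduces to checking $g(n)>0$ on the two stated ranges.

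For this I would split on $p$. When $p=2$ one has $g(n)=(n+1)(n^2-3)$, which is positive as soon as $n\ge 2$, covering $n\ge 3$. When $p=3$ one checks $g(4)>0$ directly and notes $g'(n)=3n^2+6n-6>0$ for $n\ge 1$, so $g$ is increasing, hence positive for all $n\ge 4$. When $p\ge 5$, observe that $g'(n)=3n^2+2(2p-3)n-3(p-1)$ is an upward parabola whose vertex lies at a negative value of $n$, so $g'$ is increasing on $[0,\infty)$; evaluating at the endpoint gives $g'((p+1)/2)=(11p^2-10p+3)/4$, which has negative discriminant and is therefore positive, so $g$ is strictly increasing on $[(p+1)/2,\infty)$. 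It then suffices to verify $g((p+1)/2)>0$, and a short computation yields $8\,g((p+1)/2)=5p^3-23p^2+19p-1$, which is clearly positive for $p\ge 5$. This settles part~(1).

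For part~(2), the first step is to make $F((p+1)/2,p)$ explicit. Since $p>45$ forces $p$ to be an odd prime, set $k=(p-1)/2\in\Z$; then $n-1=k$, $n+p-2=3k$, $n+2p-2=(5p-3)/2$ and $p^n=p^{k+1}$, so
\[
F\Big(\tfrac{p+1}{2},p\Big)=\binom{3k}{k}^{2}\cdot\frac{5p-3}{(p+1)\,p^{k+1}}.
\]
Next I would bound $\binom{3k}{k}$ from above: extracting the coefficient of $x^k$ in $(1+x)^{3k}$ and specialising to $x=1/2$ gives the clean bound $\binom{3k}{k}\le(27/4)^k$ (alternatively, feeding Stirling's approximation \eqref{eqn_stirling} into $\binom{3k}{k}=\frac{(3k)!}{k!(2k)!}$ gives the same growth rate $(27/4)^k$ up to an explicit polynomial factor, which is more than enough). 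Plugging this in and using $5p-3<5p$ together with $(p+1)p^{k+1}>p^{k+2}$ yields
\[
F\Big(\tfrac{p+1}{2},p\Big)<\frac{5}{p}\left(\frac{729}{16p}\right)^{k}.
\]
Finally, for a prime $p>45$ we have $p\ge 47$, whence $16p\ge 752>729$; thus both $5/p<1$ and $729/(16p)<1$, so the right-hand side is $<1$, proving part~(2).

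None of these steps is conceptually hard; the two points that need care are the bookkeeping in the cubic expansion in part~(1) — in particular verifying that the cut-offs $n\ge(p+1)/2$ (for $p>3$) and $n\ge p+1$ (for $p\le 3$) are precisely what make $g$ positive — and the observation in part~(2) that the hypothesis ``$p>45$'' is calibrated exactly to the constant $729/16$, so that the crude binomial bound already suffices and no sharper estimate is required; the Stirling estimate \eqref{eqn_stirling} is available if one prefers a fully explicit form of that bound.
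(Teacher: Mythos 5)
Your proof is correct. For part (1) you end up doing essentially the paper's computation: your ratio $F(n+1,p)/F(n,p)$ and the reduction to the positivity of $g(n)=n^3+(2p-3)n^2-3(p-1)n-(p-1)(2p-1)$ is exactly the paper's cubic $f_p(n)$ (note $-(p-1)(2p-1)=-2p^2+3p-1$), and your key evaluations $8\,g\bigl((p+1)/2\bigr)=5p^3-23p^2+19p-1=(p-1)(5p^2-18p+1)$ and the small-prime checks match theirs; the only cosmetic difference is that the paper establishes $f_p'(n)>0$ for all $n\ge 1$ by the decomposition $f_p'(n)=n^2+2n(n-1)+(p-1)(4n-3)$, whereas you argue monotonicity only on $[(p+1)/2,\infty)$ via the vertex of $g'$ — both are fine. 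For part (2) your route is genuinely different and more elementary: the paper feeds Stirling's formula \eqref{eqn_stirling} into $\binom{3(p-1)/2}{(p-1)/2}$ and has to control the error term $e^{2\beta}<1$ and compare the polynomial factor $3(5p-3)$ with $2\pi p(p^2-1)$, while you use the clean bound $\binom{3k}{k}\le(27/4)^k$ obtained from $(1+x)^{3k}$ at $x=1/2$, which sacrifices the harmless $1/\sqrt{2\pi(p-1)}$ gain but removes all asymptotic bookkeeping; both arguments hinge on the same critical constant $(27/4)^2=45.5625$ (the paper's $6.75^2$), which is precisely why the threshold $p>45$ suffices, and your crude handling of the remaining factor ($5p-3<5p$, $(p+1)p^{k+1}>p^{k+2}$, giving the prefactor $5/p<1$) closes the estimate for every prime $p\ge 47$.
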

\begin{proof}
Since $\binom{n+p-1}{n}=\frac{n+p-1}{n}\binom{n+p-2}{n-1}$, we have
\[
\begin{array}{rl}
	&\frac{n^2(n+1)p^{n+1}}{\binom{n+p-2}{n-1}^2 }\cdot(F(n,p)-F(n+1,p))\\
	=&pn(n+1)\cdot(n+2p-2)-n^2\cdot\frac{(n+p-1)^2}{n^2}\cdot(n+2p-1)\\
	=&(p-1)\cdot(n^3+(2p-3)n^2-3(p-1)n-2p^2+3p-1).
	\end{array}
\]
Write $f_p(n):=n^3+(2p-3)n^2-3(p-1)n-2p^2+3p-1$. Hence $F(n,p)-F(n+1,p)>0$ if and only if $f_p(n)>0$. Let $p$ be fixed. Since
\[
f_p'(n)=3n^2+(4p-6)n-3(p-1)=n^2+2n(n-1)+(p-1)(4n-3)>0,
\]
the function $f_p(n)$ is increasing in $n$ when $n\geq 1$. Here, we regard $f_p(n)$ as a polynomial in the variable $n$. We directly compute that
\[
f_p\left(\frac{p+1}{2}\right)=\frac{1}{8}(p-1)(5p^2-18p+1),
\]
which is positive for $p>3$. For $p=2,3$, we have $f_p(p+1)=p(3p^2-p+2)>0$. This proves the first claim.

We have $\frac{p+1}{2}+p-2=\frac{3}{2}(p-1)$. By \eqref{eqn_stirling}, we have
\begin{align*}
\binom{3(p-1)/2}{(p-1)/2}&= \frac{\left(\frac{3}{2}(p-1)\right)!}{\left(\frac{p-1}{2}\right)! \cdot (p-1)!}=\frac{3^{(3p-2)/2}}{2^{p-1}\sqrt{2\pi(p-1)}} e^{\beta}
\end{align*}
where $\beta=\alpha_{3(p-1)/2}-\alpha_{(p-1)/2}-\alpha_{(p-1)}$. Since $\frac{1}{12n+1}<\alpha_n<\frac{1}{12n}$, we have
\[
\beta< \frac{1}{18(p-1)} -\frac{1}{12(p-1)+1}-\frac{1}{6(p-1)+1}<0.
\]
We compute that
\[
F((p+1)/2,p)=\frac{3^{3p-2}}{4^{p-1}\cdot 2\pi(p-1)} \cdot \frac{5p-3}{p^{(p+1)/2}(p+1)}e^{2\beta}
  =\left(\frac{6.75^2}{p}\right)^{(p-1)/2}\frac{3(5p-3)}{2 \pi p(p^2-1)}e^{2\beta}.
\]
Since $6.75^2=45.5625$, $3(5p-3)<15p<2 \pi p(p^2-1)$ and $\beta<0$, we deduce that $F((p+1)/2,p)<1$ for $p>45$ as desired. This completes the proof.
\end{proof}

\medskip

\subsection{Primitive divisors}\label{subsec_pd}

For integers $x$ and $k$, with $x,\,k\ge 2$, a \textit{primitive} prime divisor of $x^k-1$ is a prime divisor $r$ of $x^k-1$ which does not divide $x^{k'}-1$ for all $1\le k'<k$. In other words, $x$ has order $k$ modulo $r$. As a corollary, we have $r\equiv 1\pmod{k}$.  By a theorem of Zsigmondy \cite{Z1892}, there is at least one primitive prime divisor of $x^k-1$ unless $(x,k)=(2,6)$ or $k=2$ and $x+1$ is a power of $2$. We denote such a prime by $x_k$. Note that if $(x,k)\ne (2,3)$, then $x^k+1$ is divisible by a primitive prime divisor $x_{2k}$ of $x^{2k}-1$.

The primitive prime divisors play an important role in the study of geometric objects that satisfy certain transitivity conditions, see \cite{BT2006,Bamberg2009Classificationovoid} for instance.  In the situation of the present paper, we have stronger information than the existence of primitive prime divisors. A divisor $r$ of $x^k-1$ that is relatively prime to each $x^i-1$ for $1\le i\le k$ is said to be a \textit{primitive divisor}, and we call the largest primitive divisor $\Phi^*_k(x)$ of $x^k-1$ the \textit{primitive part}. It is clear that each prime divisor of $\Phi^*_k(x)$ is a primitive prime divisor, so in particular $\Phi^*_k(x)\equiv 1\pmod{k}$ provided that $\Phi^*_k(x)>1$; conversely, each primitive prime divisor of $x^k-1$ divides $\Phi^*_k(x)$.

Let $n$ be an odd integer with $n\ge 3$, and let $q=p^d$ with $p$ a prime. Suppose that $O$ is a transitive ovoid of the Hermitian polar space $H(n,q^2)$ with ambient vector space $V$, and let $H$ be its stabilizer in $\PGaU(n+1,q^2)$.  In the case $(q,n)=(2,3)$, $O$ is either a classical ovoid or a Singer-type ovoid by \cite{Brouwer1990}. \textit{We assume that $(q,n)\ne (2,3)$ for the rest of the paper.} In particular, we always have $\Phi_{2nd}^*(p)>1$. Since $\Phi_{2nd}^*(p)$ is congruent to $1$ modulo $2nd$, we have $\gcd(2nd,\,\Phi_{2nd}^*(p))=1$.  It follows that $H\cap \PGU(n+1,q^2)$ has order divisible by $\Phi_{2nd}^*(p)$. Since the case $H$ is insoluble has been settled in \cite[Theorem 4.3]{Bamberg2009Classificationovoid}, we only consider the case where $H$ is soluble.  We shall need the following soluble case of the main theorem in \cite{BT2008Overgroup}, whose proof is based on \cite{GPPS1999}.
\begin{thm}\cite[Theorem 4.2]{BT2008Overgroup}\label{thm_BT}
Let $p$ be a prime, $q=p^d$, and $n$ be an odd integer such that $n\ge 3$, $(p,nd)\ne (2,3)$. Let $V$ be an $(n+1)$-dimensional vector space over $\F_{q^2}$, and let $h$ be a nondegenerate Hermitian form on $V$ with linear isometry group $\GU(n+1,q^2)$. If a soluble subgroup $G$ of $\GU(n+1,q^2)$ has order divisible by $\Phi^*_{2nd}(p)$, then $G$ fixes a subspace or quotient space $U$ of $V$ of dimension $n$ and  $G^U\leq \Gamma\textup{U}(1,q^{2n})$, where $G^U$ is obtained from the induced action of $G$ on $U$. Moreover, $G^U$ has order divisible by $\Phi^*_{2nd}(p)$.
\end{thm}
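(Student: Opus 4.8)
The plan is to deduce the statement from Aschbacher's subgroup theorem for the unitary group, the only combinatorial input being the arithmetic of the primitive part $\Phi:=\Phi^*_{2nd}(p)$. First I would record the relevant divisibility facts. By hypothesis $\Phi>1$, and every prime $r\mid\Phi$ is a primitive prime divisor of $p^{2nd}-1$, so $\ord_r(p)=2nd$; hence $\ord_r(q)=2n$, $\ord_r(q^2)=n$, $r\equiv 1\pmod{2nd}$ (so $r>2nd\ge 2n>n+1$, and in particular $r\nmid m!$ for $m\le n+1$ and $r\nmid q+1$), $r\mid q^n+1$, and $r\nmid q^{2i}-1$ for $1\le i<n$. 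Comparing this with the order formulas for $\textup{GL}(a,q^2)$, $\GU(a,q^2)$, $\Sp(a,q)$ and $\textup{GO}^{\pm}(a,q)$ gives the principle I will use repeatedly: such a group has order divisible by $r$ only when it is a unitary group with $a\ge n$, in which case $r$ divides its factor $q^n+1$. Consequently $r$ divides the order of no proper classical subgroup acting on a subspace, tensor factor or imprimitivity block of dimension $<n$, nor of a proper subfield subgroup, nor of the permutation group of an imprimitive decomposition.

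Next I would show that $G$ cannot act irreducibly on $V$. If it did, Aschbacher's theorem would place $G$ in a member of one of the families $\mathcal C_2,\dots,\mathcal C_8,\mathcal S$ (the reducible case $\mathcal C_1$ and the case that $G$ contains $\SU(n+1,q^2)$ being excluded here). Solubility then rules out $\mathcal S$ and, as $n+1\ge 4$, every soluble member of the extraspecial-normaliser family $\mathcal C_6$. In $\mathcal C_2$, $\mathcal C_4$ and $\mathcal C_7$ the group is assembled from classical groups on spaces of dimension $\le(n+1)/2<n$ together with a permutation group of degree $\le n+1$; in $\mathcal C_5$ and $\mathcal C_8$ it is a subfield subgroup $\GU(n+1,q_0^2)$ with $q=q_0^f$, $f\ge 2$, or a same-field classical subgroup $\Sp(n+1,q)$ or $\textup{GO}^{\pm}(n+1,q)$; in $\mathcal C_3$ it is a field-extension subgroup of prime index $b\mid n+1$, which again has order prime to $\Phi$, the obstruction being $b\mid\gcd(n,n+1)=1$. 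By the principle of the first paragraph none of these orders is divisible by $\Phi$, a contradiction; hence $G$ is reducible.

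I would then peel off a nondegenerate hyperplane. For a nonzero proper $G$-invariant subspace $W$, the subspaces $W^\perp$ and $\rad(W)=W\cap W^\perp$ are $G$-invariant, the latter totally singular; if $\rad(W)\ne 0$ then $G$ lies in a parabolic subgroup whose Levi factor is $\textup{GL}(k,q^2)\times\GU(n+1-2k,q^2)$ with $1\le k\le(n+1)/2$ and whose unipotent radical is a $p$-group, so $k,\,n+1-2k<n$ together with the principle above precludes $\Phi\mid|G|$. Thus $W$ is nondegenerate and $V=W\perp W^\perp$; iterating with a summand of least dimension produces an orthogonal decomposition $V=U_1\perp\cdots\perp U_s$ with $s\ge 2$ into nondegenerate $G$-invariant subspaces. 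Since $G$ embeds in $\prod_j\GU(U_j)$, some $\dim U_j\ge n$; as each $\dim U_j\le n$ this forces $s=2$ and $\{\dim U_1,\dim U_2\}=\{n,1\}$. Let $U$ be the $n$-dimensional summand — a $G$-invariant nondegenerate subspace, its perp $U^\perp$ furnishing the quotient alternative $V/U^\perp\cong U$ — and observe that the kernel of $G\to G^U$ acts trivially on $U$, hence lies in $\GU(U^\perp)\cong\GU(1,q^2)\cong\Z_{q+1}$, of order coprime to $\Phi$; therefore $\Phi\mid|G^U|$.

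Finally I would pin down $G^U\le\GU(n,q^2)$. It is soluble with order divisible by $\Phi$, now a product of primitive prime divisors of $q^{2n}-1$ of index $n=\dim U$, so any element $x\in G^U$ of order $r\mid\Phi$ has irreducible minimal polynomial of degree $n$ over $\F_{q^2}$; thus $x$ acts irreducibly, $\F_{q^2}[x]\cong\F_{q^{2n}}$, $C_{\GU(n,q^2)}(x)$ is the norm-one subgroup $S\cong\Z_{q^n+1}$ of $\F_{q^{2n}}^*$ (a Singer cycle of $\GU(n,q^2)$), and $N_{\GU(n,q^2)}(S)=\Gamma\textup{U}(1,q^{2n})$; in particular $G^U$ is irreducible. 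Running Aschbacher's theorem on $G^U$ and repeating the eliminations above — every class other than $\mathcal C_3$ is impossible, since an index-$n$ primitive prime divisor cannot divide the order of a proper classical, subfield or imprimitive subgroup of $\GU(n,q^2)$ — leaves $G^U\le\GU(n/b,q^{2b}).b$ for a prime $b\mid n$. As $r\nmid b$, the intersection of $G^U$ with $\GU(n/b,q^{2b})$ still has order divisible by $r$, which there is a primitive prime divisor of index $n/b$ equal to the new dimension; so I would induct on dimension, descending through a chain of subfields of $\F_{q^{2n}}$ until the natural module is $1$-dimensional, giving $G^U\le\GU(1,q^{2n})\rtimes\Gal(\F_{q^{2n}}/\F_{q^2})=\Gamma\textup{U}(1,q^{2n})$; and $\Phi\mid|G^U|$ persists along the way. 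The main obstacle is the dependence on Aschbacher's theorem — concretely, ruling out the imprimitive and field-extension families in dimension $n+1$ and controlling the tower of $\mathcal C_3$-reductions in dimension $n$, which is precisely the point at which the cited source draws on \cite{GPPS1999}; everything else is the elementary order bookkeeping of the first paragraph and the form-theoretic splitting of the third.
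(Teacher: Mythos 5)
This statement is not proved in the paper at all: it is quoted from \cite{BT2008Overgroup} (Theorem 4.2 there), and the authors explicitly note that its proof rests on \cite{GPPS1999}. Your sketch is therefore an attempted independent proof, and it has a genuine gap at exactly the point where that deep input is unavoidable. Applying Aschbacher's theorem places the soluble group $G$ inside a \emph{maximal} subgroup $M$ belonging to one of the classes $\mathcal{C}_2,\dots,\mathcal{C}_8,\mathcal{S}$, and you then discard $\mathcal{S}$ (and $\mathcal{C}_6$) ``by solubility''. But solubility of $G$ says nothing about $M$: an almost simple maximal subgroup in class $\mathcal{S}$ certainly contains soluble subgroups, and such subgroups can have order divisible by a primitive prime divisor $r$ of $p^{2nd}-1$ --- indeed the insoluble transitive ovoids in \cite{Bamberg2009Classificationovoid} arise from precisely such class-$\mathcal{S}$ overgroups (e.g.\ a group with composition factor $\textup{PSL}(2,7)$ acting on $H(3,5^2)$). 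To eliminate $\mathcal{S}$ you must either show $r\nmid |M|$ for every class-$\mathcal{S}$ maximal subgroup of $\GU(n+1,q^2)$ and of $\GU(n,q^2)$ (a deep classification, not order bookkeeping, since $r$ can be as small as $2nd+1$), or apply the theorem of \cite{GPPS1999} to $G$ itself, where the ``classical example'' and ``nearly simple'' cases are excluded because they constrain the composition factors of $G$ rather than of an overgroup. The same objection applies to $\mathcal{C}_6$: since $n+1$ is even the relevant maximal subgroup has shape $(q^2-1)\circ 2^{1+2m}.\Sp(2m,2)$ with $n+1=2^m$, which need not be soluble just because $G$ is; ruling out $r\mid |M|$ requires an arithmetic argument playing $r\equiv 1\pmod{2nd}$ against the prime divisors of $|\Sp(2m,2)|$, which you do not give.

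The elementary parts of your sketch are fine and match the routine portion of the cited proof: the primitive-divisor bookkeeping (modulo the slip that $\textup{GL}(a,q^2)$ with $a\ge n$ also has order divisible by $r$, harmless since all your linear factors have dimension $<n$), the exclusion of parabolic, imprimitive, tensor, subfield and $\mathcal{C}_3$ subgroups in dimension $n+1$, the splitting $V=U\perp U^{\perp}$ with $\dim U=n$, the fact that the kernel of $G\to G^U$ lies in $\GU(1,q^2)\cong\Z_{q+1}$ so that $\Phi^*_{2nd}(p)$ divides $|G^U|$, and the $\mathcal{C}_3$ descent inside $\GU(n,q^2)$. But your closing sentence misplaces the difficulty: the imprimitive and field-extension cases are the easy ones (your own computations dispose of them), whereas the classes $\mathcal{S}$ and $\mathcal{C}_6$ --- dismissed in one clause, and again silently at every stage of the descent in dimension $n$ --- are precisely where \cite{GPPS1999} is needed. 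As written, the proposal does not establish the theorem.
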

Theorem \ref{thm_BT} is applicable to the full preimage $G_0$ of $H\cap \PGU(n+1,q^2)$ in $\GU(n+1,q^2)$ by the preceding remarks. As a consequence, there exists a nonsingular $1$-dimensional subspace $P=\F_{q^2}\cdot v$  fixed by $H$ such that $U=P^\perp$ or $U=V/P$, cf. \cite[Propositions 4.1.4, 4.1.18]{KL129} or \cite[Table 2.3]{BHR407}. Since $\PGaU(n+1,q^2)$ acts transitively on the nonsingular points, by replacing $O$ with $g(O)$ for some $g\in\PGaU(n+1,q^2)$ if necessary, we can take $P$ to be a specific nonsingular $1$-dimensional subspace that we choose. Moreover, by \cite[Theorem 3.1]{BT2008Overgroup} the quotient image of the group  in Theorem \ref{thm_BT}  modulo  the scalars  is a subgroup of  the extension field type, and there is a unique conjugacy class of such subgroups in $\PGaU(n,q^2)$ by \cite[Proposition 4.3.6]{KL129}.

\section{The model of $H(n,q^2)$ and the Singer orbits}

In this section, we define and study the properties of the Singer orbits of $H(n,q^2)$, cf. \eqref{eqn_Sy} below. Also, we will obtain an algebraic characterization of the Singer-type ovoids when $q$ is even. Since there is no ovoid in $H(n,q^2)$ if $n>2$ is even, we assume that $n$ is odd in the sequel. We will establish a connection between a putative transitive ovoid and the Singer orbits in Lemma \ref{lem_Sy}, so the properties of the Singer orbits that we derive in this section, most notably Lemma \ref{lem_singer}, will play an important role in the proof of Theorem \ref{main}.

Set $V:=\F_{q^2}\times \F_{q^{2n}}$, and view it as an $(n+1)$-dimensional vector space over $\F_{q^2}$. We equip it with the following  Hermitian form:
\begin{equation}\label{eqn_Hf}
h((a,x))=a^{q+1}-\tr_{\F_{q^{2n}}/\F_{q^2}}(x^{q^n+1}), \quad\textup{for } (a,x)\in V.
\end{equation}
Let $H(n,q^2)$ be the associated Hermitian polar space. We define
\begin{equation}\label{eqn_rhophi}
\begin{array}{rcl}
\rho:&(a,x)\,\mapsto\, (a,\omega x),\\
\phi:&(a,x)\,\mapsto\, (a^p,x^p),
\end{array}
\end{equation}
where $\omega$ is an element of order $q^n+1$. We understand that both $\rho$ and $\phi$ act from the left, i.e., the image of a vector $v$ under $\rho$ (resp. $\phi$) is written as $\rho(v)$ (resp. $\phi(v)$). Both  of them lie in $\PGaU(n+1,q^2)$ and stabilize the point $P=\la(1,0)\ra$.  Let $G$ be the group generated by $\rho$ and $\phi$, which has order $2nd(q^n+1)$.
Set $U:=P^\perp$. Then $G$ also stabilizes $U$.

\begin{example}\label{rem_Singer}
Take $n=3$ and suppose that $q$ is even. The orbit $O$ of $(1,1)$ under the group $G=\la \rho,\phi\ra$ is an ovoid of $H(3,q^2)$, and it has $G$ as its full stabilizer when $q>2$.  In the case $q=2$, the full stabilizer of $O$ has order $324$. Any ovoid of $H(3,q^2)$ projectively equivalent to $O$ is referred to as a \textit{Singer-type} ovoid. This example is due to \cite{Cossidente2003Tovoid}, and its full stabilizer is determined in the same paper. The fact that a Singer-type ovoid has such an algebraic description follows from Lemma \ref{lem_S1} below.
\end{example}

\begin{example}\label{rem_q8}
Take $q=8$ and $n=3$, and let $\gamma$ be a primitive element of $\F_{2^9}$ with minimal polynomial $X^9+X^4+1$ over $\F_2$. By an exhaustive search using Magma \cite{Magma}, there are exactly two transitive ovoids with a soluble stabilizer in $\PGaU(4,8^2)$ up to projective equivalence:
\begin{enumerate}
\item[(1)] The orbit $O_1$ of $\la(1,\gamma^{39}) \ra$ under the group $H_1=\la \rho^9,\rho^3\phi^2\ra$ is a transitive ovoid,
\item[(2)] The orbit $O_2$ of $\la(1,\gamma^{109}) \ra$ under the group $H_2=\la\rho^9,\phi\ra$ is a transitive ovoid.
\end{enumerate}
In both cases, $H_i$ is the full stabilizer of $O_i$ in $\PGaU(4,8^2)$, $i=1,2$.
\end{example}

We define the following group homomorphism:
\begin{equation}\label{eqn_etahom}
  \eta:\, G \rightarrow \Aut(\F_{q^{2n}}),\;\rho^j\phi^i \mapsto \phi^i.
\end{equation}
Then $\ker(\eta)=\la \rho\ra$, and $\eta$ is surjective. In particular, $G$ has the structure $\Z_{q^n+1}:\,2nd$. We summarize some basic properties of $G$ in the following lemma.
\begin{lemma}\label{lem_Gpro}
	Let $G$ be as defined above. Then the following properties hold.
	\begin{enumerate}
		\item[(1)] $\phi\rho\phi^{-1}=\rho^p$, and for nonnegative integers $l,k,i$ it holds that
		\[
		(\rho^l\phi^k)^i=\rho^{(p^{ki}-1)l/(p^k-1)}\phi^{ki}.
		\]
		\item[(2)] If $g\in G\setminus \la\rho\ra$ has order $2$, then $g=\rho^j\phi^{nd}$ for some integer $j$.
	\end{enumerate}
\end{lemma}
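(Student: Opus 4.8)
The plan is to obtain (1) directly from the definitions in \eqref{eqn_rhophi}, promote the resulting conjugation relation to a general power formula by induction, and then deduce (2) by combining that formula with the normal form of elements of $G$ coming from $\ker(\eta)=\la\rho\ra$.

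For the conjugation relation I would simply compare the semilinear maps $\phi\rho$ and $\rho^p\phi$ on an arbitrary vector $(a,x)\in V$: one has $\phi\rho(a,x)=\phi(a,\omega x)=(a^p,\omega^p x^p)$, while $\rho^p\phi(a,x)=\rho^p(a^p,x^p)=(a^p,\omega^p x^p)$ since $\rho^p$ multiplies the second coordinate by $\omega^p$. Hence $\phi\rho=\rho^p\phi$, i.e. $\phi\rho\phi^{-1}=\rho^p$, and iterating gives $\phi^k\rho^l=\rho^{lp^k}\phi^k$ for all integers $k,l\ge 0$; throughout, exponents of $\rho$ are read modulo $q^n+1=\ord(\rho)$. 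The power formula $(\rho^l\phi^k)^i=\rho^{(p^{ki}-1)l/(p^k-1)}\phi^{ki}$ then follows by induction on $i$: the base case $i=1$ is immediate, and in the inductive step I push the freshly introduced block $\phi^{ki}$ past $\rho^l$ via the commutation rule above, so that the $\rho$-exponent becomes $\frac{(p^{ki}-1)l}{p^k-1}+lp^{ki}$, which simplifies to $\frac{(p^{k(i+1)}-1)l}{p^k-1}$ in one line. (When $k=0$ the stated identity is to be read with the convention $\frac{p^{ki}-1}{p^k-1}=1+p^k+\cdots+p^{k(i-1)}$, giving the obvious $(\rho^l)^i=\rho^{li}$.)

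For (2), write $g=\rho^l\phi^k$ in the \emph{normal form} provided by the decomposition $G\cong\Z_{q^n+1}:2nd$ recorded just before the lemma, with $0\le l<q^n+1$; since $g\notin\la\rho\ra$ we have $1\le k<2nd$. Applying the power formula with $i=2$ gives $g^2=\rho^{l(p^k+1)}\phi^{2k}$, and since the normal form is unique, $g^2=1$ forces first $\phi^{2k}=1$, i.e. $2nd\mid 2k$, whence $nd\mid k$; together with $1\le k<2nd$ this leaves only $k=nd$, so $g=\rho^j\phi^{nd}$ for some integer $j$, as claimed. (Conversely, any such $g$ indeed has order $2$: the $\rho$-exponent $l(p^{nd}+1)=l(q^n+1)$ is then a multiple of $\ord(\rho)$, so $g^2=1$, while $g\notin\la\rho\ra$.)

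No genuine obstacle is expected: the argument is essentially bookkeeping with the semidirect-product relation $\phi\rho\phi^{-1}=\rho^p$. The only points requiring a little care are keeping the $\rho$-exponents reduced modulo $q^n+1$ at every step, and invoking the uniqueness of the normal form $\rho^l\phi^k$ (equivalently, that $\la\rho\ra=\ker(\eta)$ has order exactly $q^n+1$) when concluding that the two factors of $g^2$ must vanish separately.
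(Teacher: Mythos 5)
Your proposal is correct and follows essentially the same route as the paper: part (1) is the routine semidirect-product computation (which the paper omits), and part (2) is exactly the paper's argument, writing $g=\rho^l\phi^k$ with $1\le k\le 2nd-1$, squaring via the formula in (1), and using $2nd\mid 2k$ to force $k=nd$. The only difference is cosmetic: you also verify the (unneeded) converse that such elements do have order $2$.
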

\begin{proof}
We omit the proof of (1) which is routine. Suppose that $g=\rho^l\phi^k$ has order $2$, where $1\le k\le 2nd-1$. Then by (1) we have $2nd|2k$, $q^n+1|(1+p^k)l$, from which we deduce that $k=nd$. This proves (2).
\end{proof}

\medskip

For a point $\la(1,y)\ra$ of the polar space $H(n,q^2)$,  the  \textit{Singer orbit} $S_y$ of $\la(1,y)\ra$ is its image under the action of $\la \rho\ra$, i.e.,
\begin{equation}\label{eqn_Sy}
S_y:=\{\la(1,\omega^i y)\ra:\,0\leq i\leq q^n \}.
\end{equation}
Let $\omega_0$ be an element of order $(q^n+1)(q-1)$ such that  $\omega=\omega_0^{q-1}$. By Lemma \ref{lem_xomega}, a Singer orbit $S_y$  contains either a point $\la(1,x)\ra$ or a point $\la(1,x\omega_0)\ra$ for some $x\in\F_{q^n}^*$.  For each integer $i$ with $0\le i\le \frac{q^n+1}{q+1}-1$, we define
\begin{equation}\label{eqn_Liy}
L_{i,y}:=\{\la(1,\,\omega^{i+j({q^n+1})/({q+1})}y)\ra:\, 0\leq j\leq q\}.
\end{equation}
Each $L_{i,y}$ has size $q+1$, and they form a partition of $S_y$.  The subscript $i$ is understood to be taken modulo $\frac{q^n+1}{q+1}$, so that $L_{i,y}$ is defined for each integer $i$.

\begin{lemma}\label{lem_omegai}
Let $q$ be even and $n$ be an odd integer with $n\geq 5$. Then there exists an element $z\in\F_{q^{2n}}$ such that $z^{q^n+1}=1$, $\tr_{\F_{q^{2n}}/\F_{q^2}}(z)=1$ and $z\ne 1$.
\end{lemma}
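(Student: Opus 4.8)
The plan is to count, with multiplicities, the elements $z$ of the norm-one subgroup $\mu_{q^n+1} = \{z \in \F_{q^{2n}}^* : z^{q^n+1}=1\}$ satisfying $\tr_{\F_{q^{2n}}/\F_{q^2}}(z)=1$, and show there are more than one of them (hence at least one $z \ne 1$). Write $T(z) := \tr_{\F_{q^{2n}}/\F_{q^2}}(z)$ and let $N$ denote the number of $z \in \mu_{q^n+1}$ with $T(z)=1$. The standard tool is additive characters of $(\F_{q^2},+)$: fixing a nontrivial additive character $\psi$ of $\F_{q^2}$, one has
\[
N = \frac{1}{q^2}\sum_{z \in \mu_{q^n+1}} \sum_{c \in \F_{q^2}} \psi\big(c(T(z)-1)\big)
  = \frac{q^n+1}{q^2} + \frac{1}{q^2}\sum_{c \in \F_{q^2}^*} \psi(-c) \sum_{z \in \mu_{q^n+1}} \psi\big(c\,T(z)\big).
\]
Since $\psi \circ T$ is, as $z$ ranges over $\F_{q^{2n}}$, precisely evaluation of a nontrivial additive character $\Psi$ of $\F_{q^{2n}}$, and $c\,T(z) = \Psi(c z)$ for the corresponding $\Psi$, the inner sum is $\sum_{z \in \mu_{q^n+1}} \Psi(cz)$, a character sum over the (cyclic) subgroup $\mu_{q^n+1}$ of index $q^n-1$ in $\F_{q^{2n}}^*$.

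First I would parametrize $\mu_{q^n+1}$: since $q$ is even, $\gcd(q^n+1,q^n-1)=1$, so the norm map $\F_{q^{2n}}^* \to \F_{q^n}^*$ restricted to $\mu_{q^n+1}$ is trivial and $z \mapsto z/z^{q^n}$... more usefully, $\mu_{q^n+1}$ is the image of the (injective, by the gcd condition) map $u \mapsto u^{q^n-1}$, or equivalently one uses Hilbert 90: every $z \in \mu_{q^n+1}$ is $z = w^{q^n-1}$ for a unique coset... cleanly, $|\mu_{q^n+1}| = q^n+1$ and we just need a bound on $\big|\sum_{z \in \mu_{q^n+1}} \Psi(cz)\big|$ for $c \ne 0$. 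This is a classical Gauss/Kloosterman-type estimate: writing the subgroup sum via multiplicative characters of order dividing $q^n-1$, or more directly, by Weil's bound the sum over a subgroup of $\F_{q^{2n}}^*$ of a nontrivial additive character is $O(\sqrt{q^{2n}}) = O(q^n)$ with an explicit constant; in fact for this particular subgroup one can show $\big|\sum_{z\in\mu_{q^n+1}}\Psi(cz)\big| \le q^{n/2} \cdot (\text{small})$, but even the crude bound $q^n - 1$ worth of characters each contributing a Gauss sum of modulus $q^n$ gives $|N - (q^n+1)/q^2| \le (q^2-1)\cdot\frac{1}{q^2}\cdot q^n$. I would aim for the sharper route: $\sum_{z \in \mu_{q^n+1}} \Psi(cz) = \frac{1}{q^n-1}\sum_{w \in \F_{q^{2n}}^*,\ w^{q^n-1} \text{ ranges}} \ldots$ — concretely substitute $z = t^{q^n+1}/t^{\,?}$... the cleanest is: for $z$ in the subgroup of order $q^n+1$, $\sum_z \Psi(cz)$ relates by Gauss-sum duality to a sum of Gauss sums $\sum_{\chi^{q^n+1}=1}\bar\chi(c) g(\chi)$, each $|g(\chi)| = q^n$ (for $\chi$ nontrivial) or $g(\chi_0)=-1$, giving $\big|\sum_z \Psi(cz) + 1\big| \le q^n \cdot q^n = q^{2n}$ — too weak. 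The efficient bound instead: $\mu_{q^n+1}$ is the kernel of the norm $N_{\F_{q^{2n}}/\F_{q^n}}$, and for such norm-one tori the exponential sum $\sum_{N(z)=1}\Psi(cz)$ is a Kloosterman sum over $\F_{q^n}$, of modulus $\le 2q^{n/2}$.

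The main obstacle, then, is getting a clean enough bound on the Kloosterman-type sum $K := \sum_{z \in \F_{q^{2n}},\, z^{q^n+1}=1} \Psi(cz)$ to conclude $N \ge 2$, i.e. to beat the "main term" $(q^n+1)/q^2$ against the error $\frac{1}{q^2}\sum_{c\ne 0}|K|$. I would establish $|K| \le 2q^{n/2}$ (classical Kloosterman bound, after identifying $z + z^{q^n} = z + \bar z$ and $z\bar z = 1$ so the sum becomes $\sum_{t}\Psi_0(ct + c\bar t^{-1})$-type over $\F_{q^n}$, where $\Psi_0$ is an additive character of $\F_{q^n}$); summing over the $q^2-1$ values of $c$ gives error at most $\frac{(q^2-1)\cdot 2 q^{n/2}}{q^2} < 2q^{n/2}$. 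Since $n \ge 5$ and $q \ge 2$, the main term $(q^n+1)/q^2 \ge (2^5+1)/4 = 33/4 > 8 \ge 2q^{n/2}$ fails for the crudest comparison at small $q$, so I would refine: for $q = 2$, $n \ge 5$, $(2^n+1)/4 - 2\cdot 2^{n/2} > 1$ already for $n=5$ ($33/4 - 2\sqrt{32} \approx 8.25 - 11.3 < 1$) — this shows the Kloosterman bound alone is \emph{not} quite enough at $(q,n)=(2,5)$ and forces either a small case-check by exhaustion (finitely many: $q=2$, $n \in \{5,7\}$, say) or a more careful treatment. So the realistic plan: prove $N \ge 2$ by the character-sum count for $q^{n/2} \ge C$ (some explicit threshold, comfortably covered once $q^n$ is moderately large), and dispose of the finitely many remaining small pairs $(q,n)$ by direct computation, noting $q$ is even and $n$ odd $\ge 5$ so the residual list is short. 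I expect the write-up to hinge on making the Kloosterman identification precise (the map $\mu_{q^n+1} \to \{t \in \F_{q^n}: \text{parametrizing } z+\bar z\}$ and its fiber sizes) and on a tidy choice of the threshold.
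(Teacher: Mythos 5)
Your plan is sound and, in substance, it is the same argument as the paper's: count the solutions $N$ by additive-character orthogonality, bound the error by Weil's Kloosterman bound $2q^{n/2}$ over $\F_{q^n}$, and observe that the resulting inequality forces $N>1$ except for a single small pair, which must be checked directly. Where you differ is in the decomposition: the paper writes $z=1+x+y\delta$ with $x,y\in\F_{q^n}$ and $\delta+\delta^q=1$, converts the two conditions into $\tr_{\F_{q^n}/\F_q}(x)=\tr_{\F_{q^n}/\F_q}(y)=0$ and $x^2+y^2v+xy+y=0$, and detects \emph{both} by characters of $\F_q$ and $\F_{q^n}$, so ordinary Kloosterman sums $K(\psi;1,b^2+ba+a^2v)$ drop out of the coordinate computation; you instead sum over the norm-one subgroup directly and detect only the trace condition with characters of $\F_{q^2}$, which makes the error a sum of $q^2-1$ torus sums $\sum_{z^{q^n+1}=1}\Psi(cz)$. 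Your route is organizationally cleaner (no coordinatization, no triple character sum), but its hinge is exactly the point you flag: proving that the torus sum equals (up to sign) a Kloosterman sum over $\F_{q^n}$ — via the norm/trace count of roots of $X^2+tX+a$ and the characteristic-two identity $\psi(a/t^2)=\psi(\sqrt a/t)$ — which is classical and does give $|\sum_z\Psi(cz)|\le 2q^{n/2}$, so the plan closes. One small sharpening: with that bound the comparison $(q^n+1)/q^2-2(q^2-1)q^{n/2}/q^2>1$ fails only at $(q,n)=(2,5)$ (your tentative list including $n=7$ is overly cautious), and this is precisely the one case the paper also disposes of by direct verification.
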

\begin{proof}
Take $\delta$ to be an element of $\F_{q^2}$ such that $\delta+\delta^q=1$, and set $v:=\delta^{q+1}\in\F_q^*$. Then $1,\,\delta$ form a basis of $\F_{q^2}$ over $\F_q$. Since $n$ is odd, they also form a basis of $\F_{q^{2n}}$ over $\F_{q^n}$. Moreover, the minimal polynomial of $\delta$ over $\F_q$ is $X^2+X+v$, which is irreducible over $\F_q$.

Let $N$ be the number of elements $z\in \F_{q^{2n}}$ such that $\tr_{\F_{q^{2n}}/\F_{q^2}}(z)=1$, $z^{q^n+1}=1$. There exist $x,y\in\F_{q^n}$ such that $z=1+x+y\delta$. Upon expansion, the first equation reduces to $\tr_{\F_{q^n}/\F_q}(x)=0$, $\tr_{\F_{q^n}/\F_q}(y)=0$, and the second reduces to  $x^2+y^2v+xy+y=0$.

Let $\psi$ be the canonical additive character of $\F_{q^n}$, i.e., $\psi(x)=(-1)^{\tr_{\F_{q^n}/\F_2}(x)}$ for $x\in\F_{q^n}$. In particular, we have $\psi(x^2)=\psi(x)$ for each $x$. For a divisor $d$ of $n$, it is well known that $\sum_{x\in\F_{q^d}}\psi(ax)=q^d$ if $\tr_{\F_{q^n}/\F_{q^d}}(a)=0$,  and $=0$ otherwise, cf. \cite{LidlFF}.  We thus  have
\begin{equation*}
\begin{array}{rl}
	q^{n+2}N=&\sum_{x,y\in\F_{q^n}}\sum_{a,b\in\F_q}\sum_{z\in\F_{q^n}}\psi( ax+by+z(x^2+y^2v+xy+y))\\
	=&q^{2n}+\sum_{a,b\in\F_q}\sum_{x,y\in\F_{q^n}}\sum_{z\in\F_{q^n}^*}\psi( ax+by+z(x^2+y^2v+xy+y))\\
	=&q^{2n}+\sum_{a,b\in\F_q}\sum_{x,y\in\F_{q^n}}\sum_{z\in\F_{q^n}^*}\psi( ax+by+xz^{1/2}+y(zv)^{1/2}+zxy+zy)\\ =&q^{2n}+\sum_{a,b\in\F_q}\sum_{y\in\F_{q^n}}\sum_{z\in\F_{q^n}^*}\psi(by+y(zv)^{\frac{1}{2}}+yz)
\cdot\sum_{x\in\F_{q^n}}\psi(x(a+z^{1/2}+yz)).
\end{array}
\end{equation*}
The last sum is nonzero only if $a+z^{1/2}+yz=0$, i.e., $y=az^{-1}+z^{-1/2}$. We thus have
\begin{equation*}
	\begin{array}{rl} q^{n+2}N=&q^{2n}+q^n\sum_{a,b\in\F_q}\sum_{z\in\F_{q^n}^*}\psi(baz^{-1}+bz^{-1/2}+az^{-1/2}v^{1/2}+v^{1/2}+a+z^{1/2})\\
	=&q^{2n}+q^n\sum_{a,b\in\F_q}\psi(v+a) K(\psi;1,b^2+ ba+a^2v).
	\end{array}
	\end{equation*}
Here, $K(\psi;1,u)=\sum_{z\in\F_{q^n}^*}\psi(z+u/z)$ is a Kloosterman sum for $u\ne 0$, and $K(\psi;1,u)=-1$ for $u=0$. In both cases, we have $|K(\psi;1,u)|\leq 2q^{n/2}$  by \cite[Theorem 5.45]{LidlFF}. Since $X^2+X+v$ is irreducible over $\F_q$, we deduce that the only pair $(a,b)$ in $\F_q\times\F_q$ such that $b^2+ba+a^2v=0$ is $(0,0)$. It follows that
\begin{align*}
q^{n+2}N&\ge q^{2n}+q^n\psi(v)-(q^2-1)\cdot 2q^{3n/2}\ge q^{2n}-q^n-(q^2-1)\cdot 2q^{3n/2}.
\end{align*}
That is, $N\ge q^{n-2}-q^{-2}-2(q^2-1)q^{n/2-2}$.  We deduce that $N>1$ unless $(q,n)=(2,5)$, and this latter case can be verified directly.  This  completes the proof.
\end{proof}
\medskip

\begin{lemma}\label{lem_S1}
Suppose that $q$ is even and $n$ is an odd integer, $n\ge 3$. Then the Singer orbit $S_1$ is an ovoid of $H(n,q^2)$ if and only if $n=3$.
\end{lemma}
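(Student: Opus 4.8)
The plan is to translate the condition ``$S_1$ is an ovoid'' into a statement about traces of $(q^n+1)$-th roots of unity, and then read off both implications from Lemmas~\ref{lem_omegai} and~\ref{lem_wiproperty}, which already carry the arithmetic weight. Set $\mu:=\{z\in\F_{q^{2n}}^*:\,z^{q^n+1}=1\}$, which coincides with $\la\omega\ra$, a cyclic group of order $q^n+1$. For each $i$ we have $(\omega^i)^{q^n+1}=1$, so by \eqref{eqn_Hf} one gets $h((1,\omega^i))=1-\tr_{\F_{q^{2n}}/\F_{q^2}}(1)=1-1=0$, using that $q$ is even and $n$ is odd, so that $\tr_{\F_{q^{2n}}/\F_{q^2}}(1)=n\cdot 1=1$ in $\F_{q^2}$. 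Hence $S_1$, as defined in \eqref{eqn_Sy}, is a set of exactly $q^n+1$ points of $H(n,q^2)$. Since $q^n+1$ is the ovoid number of $H(n,q^2)$, by the alternative description of ovoids recalled in the introduction it suffices to decide when two points of $S_1$ are perpendicular.

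Next I would make the perpendicularity condition explicit. The Hermitian form \eqref{eqn_Hf} has associated sesquilinear form $b((a,x),(a',x'))=a^qa'-\tr_{\F_{q^{2n}}/\F_{q^2}}(x^{q^n}x')$, where one uses that $n$ is odd so that $c\mapsto c^{q^n}$ restricts to $c\mapsto c^q$ on $\F_{q^2}$; one checks directly that $b$ is Hermitian with $b(v,v)=h(v)$. Now $\la(1,\omega^i)\ra\perp\la(1,\omega^j)\ra$ if and only if $b((1,\omega^i),(1,\omega^j))=0$, and since $\omega^{q^n}=\omega^{-1}$ this reads $\tr_{\F_{q^{2n}}/\F_{q^2}}(\omega^{j-i})=1$. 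Therefore $S_1$ is an ovoid of $H(n,q^2)$ if and only if $\tr_{\F_{q^{2n}}/\F_{q^2}}(z)\ne 1$ for every $z\in\mu\setminus\{1\}$.

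Both directions now follow from the lemmas in hand. If $n\ge 5$, Lemma~\ref{lem_omegai} supplies some $z\in\mu$ with $z\ne 1$ and $\tr_{\F_{q^{2n}}/\F_{q^2}}(z)=1$, so that $\la(1,1)\ra$ and $\la(1,z)\ra$ are two distinct perpendicular points of $S_1$; hence $S_1$ is not an ovoid. If $n=3$, take any $z\in\mu\setminus\{1\}$. When $z\notin\F_{q^2}$, Lemma~\ref{lem_wiproperty} gives $\tr_{\F_{q^6}/\F_{q^2}}(z)\ne 1$. When $z\in\F_{q^2}$, then $z\in\F_{q^2}^*\cap\mu$; since $\gcd(q^2-1,q^3+1)=q+1$ this forces $z^{q+1}=1$, and then $z\ne 1$ together with $q$ even gives $\tr_{\F_{q^6}/\F_{q^2}}(z)=3z=z\ne 1$. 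In either case the trace condition holds, so $S_1$ is an ovoid. This proves the equivalence.

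The only step demanding genuine care is the reduction in the second paragraph: computing the sesquilinear form attached to $h$ correctly (being careful about which argument is semilinear, and using that $n$ is odd) and, in the case $n=3$, correctly identifying $\F_{q^2}^*\cap\mu=\mu_{q+1}$. Everything else is routine, and the real content has been isolated into Lemmas~\ref{lem_omegai} and~\ref{lem_wiproperty}.
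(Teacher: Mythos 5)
Your proof is correct and follows essentially the same route as the paper: reduce ``$S_1$ is an ovoid'' to the condition $\tr_{\F_{q^{2n}}/\F_{q^2}}(\omega^k)\ne 1$ for $1\le k\le q^n$, dispose of the case $\omega^k\in\F_{q^2}$ directly (char.\ $2$, $n$ odd), and invoke Lemma~\ref{lem_wiproperty} for $n=3$ and Lemma~\ref{lem_omegai} for $n\ge 5$. You are merely more explicit than the paper about the associated sesquilinear form and about $S_1$ being a set of $q^n+1$ singular points, which is fine.
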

\begin{proof}
The set $S_1$ is an ovoid if and only if no two points of $S_1$ are perpendicular. This amounts to $1+\tr_{\F_{q^{2n}}/\F_{q^2}}(\omega^{k})\ne0$ for $1\le k\le q^n$, where $o(\omega)=q^n+1$. In the case $\omega^k\in\F_{q^2}$, this condition clearly holds. So we only need to consider the case $\omega^k\not\in\F_{q^2}$. The claim now follows from Lemma \ref{lem_wiproperty} for $n=3$ and Lemma \ref{lem_omegai} for $n\ge 5$.
\end{proof}
\medskip

To better understand the structure of the Singer orbits, we need to consider their interplay with the following subset of $H(n,\,q^2)$:
\begin{equation}\label{eqn_T}
T:=\{\la(0,t)\ra:\,t\in\F_{q^n}^*\;\textup{such that}\;  \tr_{\F_{q^n}/\F_q}(t^2)=0 \},
\end{equation}
which contains  $\frac{q^{n-1}-1}{q-1}$ distinct points in $H(n,q^2)$. Let $\Pi:=\{\la (0,x)\ra:\,x\in\F_{q^n}^*\}$ be a Baer subgeometry of the hyperplane $P^\perp$, where $P=\la (1,0)\ra$. In the case $q$ is even, these points form a hyperplane of $\Pi$. In the case $q$ is odd, they form a parabolic quadric on $\Pi$.
\begin{lemma}\label{lem_singer}
Take notation as above. Choose $R_t=\la(0,t)\ra\in T$ and $x\in\F_{q^n}^*$, and set $y=x$ or $y=x\omega_0$, where $o(\omega_0)=(q^n+1)(q-1)$. If $U:=R_t^{\perp} \cap S_y$, then  $|U|= k(q+1)$ for some integer $k$.  Moreover, if $k$ is odd, then $\tr_{\F_{q^n}/\F_{q}}(xt)=0$ in both cases.
\end{lemma}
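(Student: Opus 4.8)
\emph{Plan.} The idea is to reduce both assertions to statements about when a certain trace function vanishes along a geometric progression in $\F_{q^{2n}}^*$, and then to exploit two symmetries of that function: its $\F_{q^2}$-linearity, which yields $(q+1)\mid|U|$, and a symmetry coming from the Frobenius $z\mapsto z^{q^n}$, which yields the parity statement.

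First I would record the perpendicularity condition. Writing down the Hermitian sesquilinear form underlying \eqref{eqn_Hf} and using $t\in\F_{q^n}$, one finds that $\la(1,\omega^i y)\ra\in R_t^\perp$ precisely when $\tr_{\F_{q^{2n}}/\F_{q^2}}(\omega^i w)=0$, where $w:=yt\in\F_{q^{2n}}^*$. Set $m:=(q^n+1)/(q+1)$, so $\omega^m$ has order $q+1$; since the subgroup of order $q+1$ of $\F_{q^{2n}}^*$ is the one lying in $\F_{q^2}^*$, we get $\omega^{jm}\in\F_{q^2}$ and hence $\tr_{\F_{q^{2n}}/\F_{q^2}}(\omega^{i+jm}w)=\omega^{jm}\tr_{\F_{q^{2n}}/\F_{q^2}}(\omega^i w)$. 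Thus each block $L_{i,y}$ of \eqref{eqn_Liy} either lies entirely in $U$ or is disjoint from it, so $U$ is a union of $k$ blocks and $|U|=k(q+1)$; this is the first assertion.

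For the parity, consider $A:=\{i\in\Z/(q^n+1):\tr_{\F_{q^{2n}}/\F_{q^2}}(\omega^i w)=0\}$, which by the above is a union of cosets of $C_0:=\la\omega^m\ra$, and $k$ is the number of these cosets, i.e. $k=|\bar A|$ where $\bar A$ is the image of $A$ in $\Z/m$. Raising $\tr_{\F_{q^{2n}}/\F_{q^2}}(\omega^i w)$ to the $q^n$-th power and using $\omega^{q^n}=\omega^{-1}$ and $w^{q^n}=w^{q^n-1}w$ shows that $\tr_{\F_{q^{2n}}/\F_{q^2}}(\omega^i w)=0$ if and only if $\tr_{\F_{q^{2n}}/\F_{q^2}}(\omega^{i_0-i}w)=0$, where $\omega^{i_0}:=w^{q^n-1}$ (which lies in $\la\omega\ra$ since $(w^{q^n-1})^{q^n+1}=1$). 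Hence $i\mapsto i_0-i$ is an involution preserving $A$, so it descends to an involution $\bar\iota$ of $\Z/m$ preserving $\bar A$. As $m$ is odd ($m\equiv1$ if $q$ is even, and $m\equiv n\equiv1\pmod 2$ if $q$ is odd), $\bar\iota$ has a unique fixed point $\bar i^\ast$, the solution of $2\bar i^\ast\equiv i_0\pmod m$; pairing the remaining elements of $\bar A$ under $\bar\iota$ shows that $k=|\bar A|$ is even unless $\bar i^\ast\in\bar A$. Consequently, if $k$ is odd then $\tr_{\F_{q^{2n}}/\F_{q^2}}(\omega^{i^\ast}w)=0$ for any integer $i^\ast$ with $2i^\ast\equiv i_0\pmod m$.

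It remains to interpret this last equation. For such an $i^\ast$ one has $\omega^{2i^\ast-i_0}\in C_0$, so $(\omega^{i^\ast}w)^2=\omega^{2i^\ast-i_0}\cdot\omega^{i_0}w^2=\omega^{2i^\ast-i_0}\cdot w^{q^n+1}$, while a direct computation gives $w^{q^n+1}=y^{q^n+1}t^2=g'(xt)^2$ with $g'\in\F_q^*$ ($g'=1$ if $y=x$, and $g'=\omega_0^{q^n+1}$, of order $q-1$, if $y=x\omega_0$). Hence $\big(\omega^{i^\ast}w/(xt)\big)^2\in C_0\cdot\F_q^*\subseteq\F_{q^2}^*$, and this is in fact a square in $\F_{q^2}^*$: trivially if $q$ is even, and if $q$ is odd because $\F_{q^2}^*$ is cyclic while $C_0$ (of order $q+1$) and $\F_q^*$ (of order $q-1$) both lie in $(\F_{q^2}^*)^2$ since $q\pm1$ are even. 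Therefore $\omega^{i^\ast}w=\lambda\cdot xt$ for some $\lambda\in\F_{q^2}^*$, and by Lemma \ref{lem_ind} we get $\tr_{\F_{q^{2n}}/\F_{q^2}}(\omega^{i^\ast}w)=\lambda\,\tr_{\F_{q^{2n}}/\F_{q^2}}(xt)=\lambda\,\tr_{\F_{q^n}/\F_q}(xt)$, so $k$ odd forces $\tr_{\F_{q^n}/\F_q}(xt)=0$ in both cases. The step I expect to require the most care is this last one in odd characteristic: one must verify that the square class of $\omega^{2i^\ast-i_0}w^{q^n+1}/(xt)^2$ is unaffected by the allowed change of the lift $i^\ast$ (which multiplies $\omega^{2i^\ast-i_0}$ by an element of $C_0^2$), and that $C_0$ and $\F_q^*$ genuinely consist of squares in $\F_{q^2}^*$.
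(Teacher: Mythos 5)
Your proof is correct and follows essentially the same route as the paper: the $\F_{q^2}$-linearity of the relative trace gives that $U$ is a union of the blocks $L_{i,y}$, the $q^n$-power Frobenius gives the involution $i\mapsto i_0-i$ with a unique fixed block modulo the odd number $(q^n+1)/(q+1)$, and the parity argument forces that fixed block into $U$ when $k$ is odd. The only (minor) difference is at the end: the paper computes the fixed index $i_0$ explicitly in the two cases and checks directly that $\omega^{i_0}y\in\F_{q^2}^*\cdot x$, whereas you reach the same conclusion via the square-class argument showing $\bigl(\omega^{i^\ast}yt/(xt)\bigr)^2$ is a square in $\F_{q^2}^*$ — both are valid.
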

\begin{proof}
A point $\la(1,\omega^iy)\ra$ lies in $R_t^\perp$ if and only if
\begin{equation}\label{eqn_wiyt}
  \tr_{\F_{q^{2n}}/\F_{q^2}}(w^iyt)=0.
\end{equation}
The following are two easy consequences.
\begin{enumerate}
	\item[(a)] The element $\omega^{(q^n+1)/(q+1)}$ lies in $\F_{q^2}$, since $\omega$ has order $q^n+1$. Therefore, the point $\la(1,\omega^iy)\ra$ lies in $R_t^\perp$ implies that $L_{i,y}\subseteq R_t^\perp$. That is, $U$ is the union of some $L_{i,y}$'s. As a corollary, $|U|=k(q+1)$ for some integer $k$.
	\item[(b)] Take $a$ to be the unique integer such that $0\le a\le q^n$ and $\omega^a=y^{q^n-1}$. By raising both sides of \eqref{eqn_wiyt} to the $q^n$-th power, we get $\tr_{\F_{q^{2n}}/\F_{q^2}}(\omega^{a-i}yt)=0$. That is, $L_{i,y}\subseteq U$ implies that $L_{a-i,y}\subseteq U$.
\end{enumerate}

Write $M:=\frac{q^n+1}{q+1}$, which is an odd integer. The two subsets $L_{i,y}$ and $L_{a-i,y}$ are equal if and only if $i\equiv a-i\pmod{M}$, i.e., $2i\equiv a\pmod{M}.$
Since $M$ is odd, there is exactly one integer $i_0$ such that $0\le i_0\le M-1$ and $L_{i_0,y}=L_{a-i_0,y}$. To be specific, if $y=x$, then $a=0$ and $i_0=0$; if $y=x\omega_0$, then $a=\frac{q^n-1}{q-1}$ and $i_0\equiv\frac{q^n-1}{q-1}t\pmod{M}$, where $t$ is the inverse of $2$ modulo $M$. In the later case,  we have $(q-1)i_0+1=(q^n-1)t+2t \equiv 0\pmod M$, i.e., $\omega^{i_0}\omega_0 \in \F_{q^2}^*$.

If $k$ is odd, then we must have $L_{i_0,y}\subseteq U$ by (b). That is, \eqref{eqn_wiyt} holds with $i=i_0$. In both cases, we plug in the expressions of $i_0$ and $y$ and deduce that $\tr_{\F_{q^{2n}}/\F_{q^2}}(xt)=0$ upon simplification. The second claim now follows, since $xt$ is in $\F_{q^n}$.
\end{proof}
\medskip

\begin{thm}\label{thm_singerovoid}
Let $n\ge 3$ be an odd integer. For a point $\la(1,y)\ra$ of $H(n,q^2)$, define $S_y$ as in \eqref{eqn_Sy}. Then  $S_y$ is not an ovoid of $H(n,q^2)$ unless $S_y$ is the Singer-type ovoid $S_1$ of $H(3,q^2)$ for even $q$.
\end{thm}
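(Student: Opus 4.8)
The plan is to test a putative Singer‑orbit ovoid $S_y$ against the incidence identity \eqref{eqn_def}, using the special point set $T$ of \eqref{eqn_T}, and then to split on the parity of $q$. Since $S_{\omega^i y}=S_y$ for every $i$, Lemma \ref{lem_xomega} lets me replace $y$ by a suitable $\omega^{-i}y$ and assume $y=x$ or $y=x\omega_0$ for some $x\in\F_{q^n}^*$, the second possibility occurring only when $q$ is odd; in either case $\la(1,y)\ra\in H(n,q^2)$ because it lies in $S_y$. Fix any $R_t=\la(0,t)\ra\in T$. Its first coordinate is $0$, so $R_t\notin S_y$, and if $S_y$ is an ovoid then $|R_t^\perp\cap S_y|=q^{n-2}+1$ by \eqref{eqn_def}. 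Lemma \ref{lem_singer} says this number equals $k(q+1)$, hence $k=(q^{n-2}+1)/(q+1)$, which is an integer since $n-2$ is odd; writing $k=q^{n-3}-q^{n-4}+\cdots-q+1$ (a sum of $n-2$ terms) and reducing modulo $2$ shows $k$ is odd irrespective of the parity of $q$. So the ``moreover'' clause of Lemma \ref{lem_singer} applies and yields $\tr_{\F_{q^n}/\F_q}(xt)=0$. Letting $R_t$ range over $T$, this says precisely that the $\F_q$-linear functional $t\mapsto\tr_{\F_{q^n}/\F_q}(xt)$ on $\F_{q^n}$ vanishes on the entire zero set of the quadratic form $Q(t):=\tr_{\F_{q^n}/\F_q}(t^2)$.

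For odd $q$ I would argue as follows. The polar form of $Q$ is $(s,t)\mapsto 2\tr_{\F_{q^n}/\F_q}(st)$, which is nondegenerate, so $Q$ is a nondegenerate quadratic form on the $n$-dimensional $\F_q$-space $\F_{q^n}$; since $n\ge 3$, its zero set spans $\F_{q^n}$ over $\F_q$ (equivalently, a nondegenerate quadric of $\PG(n-1,q)$ with $n-1\ge 2$ lies in no hyperplane). A nonzero linear functional cannot vanish on a spanning set, so $t\mapsto\tr_{\F_{q^n}/\F_q}(xt)$ is identically $0$, forcing $x=0$, a contradiction. Hence no Singer orbit is an ovoid when $q$ is odd.

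For even $q$ we have $Q(t)=\tr_{\F_{q^n}/\F_q}(t)^2$, so the functional $t\mapsto\tr_{\F_{q^n}/\F_q}(xt)$ vanishes on the hyperplane $\ker\tr_{\F_{q^n}/\F_q}$. Comparing it with $\tr_{\F_{q^n}/\F_q}$ and using nondegeneracy of the trace form, I get $x\in\F_q$, hence $x\in\F_q^*$. Here $y=x$ and $S_y=S_x$, and since $\la(1,x)\ra$ is a singular point we must have $\tr_{\F_{q^{2n}}/\F_{q^2}}(x^{q^n+1})=1$; but $x^{q^n+1}=x^2\in\F_q$ and $\tr_{\F_{q^{2n}}/\F_{q^2}}$ acts on $\F_q$ as multiplication by the odd integer $n$, i.e.\ as the identity in characteristic $2$, so this forces $x^2=1$, i.e.\ $x=1$. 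Thus $S_y=S_1$, and Lemma \ref{lem_S1} then says $S_1$ is an ovoid of $H(n,q^2)$ if and only if $n=3$; so $n=3$ and $S_y$ is the Singer-type ovoid, while conversely $S_1$ is indeed an ovoid in that case. This exhausts all possibilities.

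The real obstacle is merely careful bookkeeping: one has to confirm that $(q^{n-2}+1)/(q+1)$ is an odd integer — this is exactly what triggers the ``moreover'' part of Lemma \ref{lem_singer} — and one has to justify that the zero set of the nondegenerate form $Q$ is contained in no hyperplane when $q$ is odd, which is a standard fact about nondegenerate quadrics and in any case admits a one-line proof by taking an isotropic vector $v$ and, for a given hyperplane, perturbing a vector $u\notin v^{\perp}$ by a multiple of $v$ to produce an isotropic vector off that hyperplane. All the substantive work is already carried by Lemmas \ref{lem_singer} and \ref{lem_S1}; the one genuinely new observation is that in even characteristic the bare fact that $\la(1,x)\ra$ is a point of $H(n,q^2)$, together with $n$ odd, collapses $x$ to $1$ and so pins the orbit down to $S_1$ rather than to some arbitrary $S_x$ with $x\in\F_q^*$.
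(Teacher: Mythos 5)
Your proposal is correct and follows essentially the same route as the paper: reduce to $y=x$ or $y=x\omega_0$ via Lemma \ref{lem_xomega}, apply \eqref{eqn_def} at points of $T$, use the oddness of $(q^{n-2}+1)/(q+1)$ to trigger the ``moreover'' clause of Lemma \ref{lem_singer}, and then conclude for odd $q$ that the parabolic quadric $T$ cannot lie in a hyperplane, while for even $q$ force $x=1$ and invoke Lemma \ref{lem_S1}. Your extra details (the explicit parity computation of $k$, the spanning argument for the quadric, and the computation pinning $x=1$) are just elaborations of steps the paper asserts more briefly.
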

\begin{proof}
By Lemma \ref{lem_xomega}, $S_y$ contains an element $\la (1,x)\ra$ or $\la (1,x\omega_0)\ra$, where $x\in\F_{q^n}^*$ and $o(\omega_0)=(q^n+1)(q-1)$. We assume without loss of generality that $y=x$ or $y=x\omega_0$ for some $x\in\F_{q^n}^*$. Moreover, if $q$ is even we only need to consider the case $y=x\in\F_{q^n}^*$.

Suppose to the contrary that $S_y$ is an ovoid of $H(n,q^2)$. Let $T$ be as in \eqref{eqn_T}. By \eqref{eqn_def}, we have $|R_t^\perp\cap S_y|=q^{n-2}+1$, where $R_t=\la (0,t)\ra\in T$ for some $t\in\F_{q^n}$. Since $n$ is odd,  $\frac{q^{n-2}+1}{q+1}$ is odd.   By Lemma \ref{lem_singer}, we have $\tr_{\F_{q^n}/\F_{q}}(xt)=0$. This holds for all $t\in\F_{q^n}^*$ such that $\tr_{\F_{q^n}/\F_{q}}(t^2)=0$.
\begin{enumerate}
\item[(1)]In the case $q$ is even, it follows that $y=x\in\F_{q}^*$. Since $\la(1,y)\ra$ lies on $H(n,q^2)$, we deduce that $y=1$. The claim now follows from Lemma \ref{lem_S1}.
\item[(2)]In the case $q$ is odd, this implies that the points of $T$ lie on the hyperplane $\{\la (0,t)\ra:\,\tr_{\F_{q^n}/\F_{q}}(xt)=0\}$ of the Baer subgeometry $\Pi=\PG(\{0\}\times\F_{q^n})$ of $\PG(\{0\}\times\F_{q^{2n}})$. This is impossible, since $T$ is a nondegenerate parabolic quadric on $\Pi$.
\end{enumerate}
This completes the proof.
\end{proof}
\medskip

\section{The classification of transitive ovoids of $H(n,q^2)$}

In this section, we give the proof of Theorem \ref{main}. By \cite{Thas1981} there is no ovoid in $H(n,q^2)$ when $n$ is even, so we assume that $n$ is odd with $n\ge 3$. Let $q=p^d$ with $p$ prime. Suppose that $O$ is a transitive ovoid of $H(n,q^2)$ whose stabilizer in $\PGaU(n+1,q^2)$ is $H$. Since the case $H$ is insoluble has been settled in \cite{Bamberg2009Classificationovoid}, we only need to consider the case where $H$ is soluble. By Theorem  \ref{thm_BT} and the arguments following it, we can take the model of $H(n,q^2)$ as introduced in Section 3, and assume without loss of generality that $H$ is a subgroup of $G=\la\rho,\,\phi\ra$, where $\rho$ and $\phi$ are as in \eqref{eqn_rhophi}.

Let $\omega_0$ be a fixed element of order $(q^n+1)(q-1)$ in $\F_{q^{2n}}$, and write $\omega=\omega_0^{q-1}$. We keep the model of $H(n,q^2)$ and the notation as introduced in Section 3. Let $S_y$ and $T$ be as in \eqref{eqn_Sy} and \eqref{eqn_T}.
By Lemma \ref{lem_xomega}, there exists some $g\in G$ such that $g(O)$ contains a point of one of  the following forms:
\begin{enumerate}
\item[(1)] $\la(1,\,x)\ra$ with $x\in\F_{q^n}^*$;
\item[(2)] $\la(1,\,x\omega_0)\ra$ with $x\in\F_{q^n}^*$,  in the case $q$ is odd.
\end{enumerate}
If $q$ is even, only the first form occurs. Therefore, we assume without loss of generality that $O$ contains a point $\la (1,y)\ra$ of one of the two prescribed forms.

\subsection{Restrictions on the parameters}\label{subsec_res}

Since $H$ acts transitively on $O$, there is a positive integer $m$ such that $|H|=m(q^n+1)$.  Write $H\cap\la \rho\ra=\la\rho^s\ra$, where $s|q^n+1$. In the case $s=1$, $O=S_y$ for some $y\in\F_{q^{2n}}^*$, which has been settled in Theorem \ref{thm_singerovoid}, so we assume that $s>1$ below. By considering the restriction of the homomorphism $\eta:\,G\rightarrow\Aut(\F_{q^{2n}})$ to $H$, we see that
\begin{equation}\label{eqn_gpH}
H=\la\rho^s,\,\rho^j\phi^k\ra
\end{equation}
for some integers $k,j$ such that $0\le j\le s-1$,  $k|2nd$ and $\frac{q^n+1}{s}\cdot\frac{2nd}{k}=|H|$. The last equality is equivalent to $2nd=mks$.

\begin{lemma}\label{lemma_Hform}
We have $mks=2nd$, $s|(q^n+1)$, and $\frac{p^{2nd}-1}{p^k-1}\cdot j\equiv 0\pmod{s}$.
\end{lemma}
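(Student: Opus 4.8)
The plan is to read off all three assertions directly from the presentation $H = \langle \rho^s,\, \rho^j\phi^k\rangle$ by combining the homomorphism $\eta$ with the power formula of Lemma~\ref{lem_Gpro}(1). The first two claims are essentially bookkeeping already recorded in the discussion preceding the lemma: since $\langle\rho\rangle$ is cyclic of order $q^n+1$ and $\langle\rho^s\rangle = H\cap\langle\rho\rangle$ by definition, we have $s \mid (q^n+1)$; and applying $\eta$ to $H$ gives $\ker(\eta|_H) = H\cap\langle\rho\rangle = \langle\rho^s\rangle$ of order $(q^n+1)/s$ and image $\langle\phi^k\rangle$ of order $2nd/k$ (here $\phi$ has order $2nd$ on $\F_{q^{2n}} = \F_{p^{2nd}}$ and $k\mid 2nd$), so that $|H| = \tfrac{q^n+1}{s}\cdot\tfrac{2nd}{k}$; comparing with $|H| = m(q^n+1)$ yields $mks = 2nd$.

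For the congruence $\tfrac{p^{2nd}-1}{p^k-1}\,j \equiv 0 \pmod s$, the idea is to raise the generator $g := \rho^j\phi^k \in H$ to the power $ms = 2nd/k$, so that its $\phi$-part disappears. By Lemma~\ref{lem_Gpro}(1) with $l = j$ and $i = ms$,
\[
g^{ms} = (\rho^j\phi^k)^{ms} = \rho^{\,(p^{kms}-1)j/(p^k-1)}\,\phi^{kms} = \rho^{\,(p^{2nd}-1)j/(p^k-1)},
\]
since $kms = 2nd$ and $\phi^{2nd} = 1$; the exponent is an integer because $k\mid 2nd$. As $g\in H$, this element lies in $H\cap\langle\rho\rangle = \langle\rho^s\rangle$, and since $s\mid q^n+1 = o(\rho)$, membership in $\langle\rho^s\rangle$ forces $s$ to divide the integer $(p^{2nd}-1)j/(p^k-1)$, which is exactly the claim.

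I do not expect a genuine obstacle. The only points needing a moment's care are: (i) that $\eta(H)$ is generated by $\phi^k$ alone, using $\eta(\rho^s) = 1$ and $\eta(\rho^j\phi^k) = \phi^k$; and (ii) the passage from ``$\rho^a\in\langle\rho^s\rangle$'' to ``$s\mid a$'' as integers, which is valid precisely because $s\mid q^n+1$, so that $s\Z + (q^n+1)\Z = s\Z$. Everything else is a single substitution into the formula of Lemma~\ref{lem_Gpro}(1).
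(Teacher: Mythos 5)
Your proposal is correct and follows essentially the same route as the paper: the first two claims are read off from the setup (the definition of $s$ via $H\cap\langle\rho\rangle=\langle\rho^s\rangle$ and the order count $|H|=\frac{q^n+1}{s}\cdot\frac{2nd}{k}=m(q^n+1)$), and the congruence is obtained by the identical computation $(\rho^j\phi^k)^{2nd/k}=\rho^{j(p^{2nd}-1)/(p^k-1)}\in\langle\rho^s\rangle$ via Lemma~\ref{lem_Gpro}(1). Your extra remarks (that $\eta(H)=\langle\phi^k\rangle$ and that membership in $\langle\rho^s\rangle$ forces divisibility by $s$ because $s\mid q^n+1$) are correct and just make explicit what the paper leaves implicit.
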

\begin{proof}
By (1) of Lemma \ref{lem_Gpro}, $(\rho^j\phi^k)^{2nd/k}=\rho^{j(p^{2nd}-1)/(p^k-1)}$, so it lies in $H\cap\la\rho\ra=\la\rho^s\ra$. This gives this last congruence in the lemma.
\end{proof}
\medskip

\begin{corollary}\label{cor_s}
	If $q$ is even, then $s$ is odd; if $q$ is odd, then $s$ is not a multiple of $4$.
\end{corollary}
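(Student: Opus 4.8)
\emph{Proof plan.} The strategy is to reduce the corollary to an elementary $2$-adic valuation computation, using only the two divisibility facts $s\mid(q^n+1)$ and $mks=2nd$ from Lemma \ref{lemma_Hform}; the congruence $\frac{p^{2nd}-1}{p^k-1}\cdot j\equiv 0\pmod s$ plays no role. Write $v_2$ for the $2$-adic valuation. Since $m$ and $k$ are positive integers, $mks=2nd$ gives $s\mid 2nd$, and together with $s\mid(q^n+1)$ this yields
\[
v_2(s)\le\min\bigl\{v_2(2nd),\ v_2(q^n+1)\bigr\}.
\]
Because $n$ is odd, $v_2(2nd)=1+v_2(d)$.

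First I would treat the even case: if $q$ is even then $p=2$, so $q^n+1=2^{nd}+1$ is odd, and $s\mid(q^n+1)$ forces $s$ to be odd. For the odd case I would split on the parity of $d$. If $d$ is odd, then $v_2(2nd)=1$, so $v_2(s)\le 1$ and $4\nmid s$. If $d$ is even, then $nd$ is even (as $n$ is odd), so $q^n=(p^{nd/2})^2$ is the square of an odd integer; hence $q^n\equiv 1\pmod 8$, $q^n+1\equiv 2\pmod 8$, and $v_2(q^n+1)=1$, so again $v_2(s)\le 1$. In all cases $s$ is not divisible by $4$, which is the assertion.

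I do not expect a genuine obstacle: the content is entirely in the observation that at least one of $2nd$ and $q^n+1$ has $2$-adic valuation exactly $1$ (governed by whether $d$ is odd or even), after one extracts $s\mid 2nd$ from $mks=2nd$. The only mild care needed is keeping track that $n$ is assumed odd throughout, which is what makes $v_2(2nd)=1+v_2(d)$ and which forces $nd$ even whenever $d$ is even.
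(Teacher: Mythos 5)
Your argument is correct and is essentially the paper's proof, just reorganized: the paper assumes $4\mid s$, deduces $2\mid d$ from $2nd=mks$ (your $d$-odd case), and then uses $q^n+1=p^{nd}+1\equiv 2\pmod 4$ to contradict $s\mid(q^n+1)$ (your $d$-even case). Both rest on exactly the two divisibility facts $s\mid 2nd$ and $s\mid(q^n+1)$ together with the observation that an odd square is $1$ modulo $4$, so no further comment is needed.
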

\begin{proof}
	The case $q$ is even is clear, since $q^n+1$ is odd. Assume that $q$ is odd, and suppose to the contrary that $s$ is multiple of $4$. By $2nd=mks$, we deduce that $2|d$. Then $q^n+1=p^{nd}+1\equiv 2\pmod{4}$, so $q^n+1$ is not divisible by $4$. This contradicts the fact that $s$ divides $q^n+1$.
\end{proof}
\medskip

\begin{lemma}\label{lem_case2cond}
Suppose that $q=p^d$ is odd and $s$ is even, and let $\la(1,y)\ra$ be a point of $O$.
\begin{enumerate}
\item[(1)] If $m$ is even and $y\in \F_{q^n}^*$, then $y\in \F_{p^{ks}}^*$.
\item[(2)] If $y=x\omega_0$ for some $x\in \F_{q^n}^*$, then
		$m$ is odd and $y^{(p^{ks/2}-1)(q^n+1)}=1$.
\end{enumerate}
\end{lemma}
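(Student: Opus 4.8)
The plan is to work entirely inside the point stabilizer $K:=\mathrm{Stab}_H(\la(1,y)\ra)$, which has order $m$ by orbit--stabilizer (the ovoid has $q^n+1$ points). First I would pin down its shape: $\rho^{si}$ fixes $\la(1,y)\ra$ exactly when $(q^n+1)\mid si$, i.e.\ exactly when $\rho^{si}=1$, so $K\cap\la\rho^s\ra=1$ and $K$ embeds into the cyclic group $H/\la\rho^s\ra$ of order $ms$; hence $K$ is cyclic of order $m$ and is generated by some $g\in(\rho^j\phi^k)^s\la\rho^s\ra$. By Lemma~\ref{lem_Gpro}(1) this $g$ has the form $g=\rho^{A}\phi^{ks}$ with $A\equiv j\tfrac{p^{ks}-1}{p^k-1}\pmod{s}$, and ``$g$ fixes $\la(1,y)\ra$'' reads $\omega^{A}y^{p^{ks}}=y$, i.e.
\[
y^{p^{ks}-1}=\omega^{-A}.
\]
The single arithmetic input that makes everything go is: $\tfrac{p^{ks}-1}{p^k-1}=1+p^k+\cdots+p^{(s-1)k}$ is a sum of $s$ odd integers, hence \emph{even} as $s$ is even, and since $s$ is even this forces $A$ to be \emph{even}.

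For part~(1): as $y\in\F_{q^n}^*$ the left-hand side of $y^{p^{ks}-1}=\omega^{-A}$ lies in $\F_{q^n}^*\cap\la\omega\ra$, a group of order $\gcd(q^n-1,q^n+1)=2$, so $y^{p^{ks}-1}\in\{1,-1\}$ and it suffices to exclude $-1$. Since $m$ is even, $2nd=mks$ and $v_2(s)=1$ (Corollary~\ref{cor_s}) give $v_2(d)=v_2(m)+v_2(k)\ge1$, so $d$ is even; hence $p^{nd}\equiv1\pmod8$ and $(q^n+1)/2$ is odd. If $y^{p^{ks}-1}=-1=\omega^{(q^n+1)/2}$, then $A\equiv(q^n+1)/2\pmod{q^n+1}$, and reducing modulo $2$ forces $A$ to share the (odd) parity of $(q^n+1)/2$, contradicting that $A$ is even. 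So $y^{p^{ks}-1}=1$, i.e.\ $y\in\F_{p^{ks}}^*$.

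For part~(2) I first prove $m$ is odd. If $m$ were even, $K$ would contain the involution $g^{m/2}$, equal by Lemma~\ref{lem_Gpro}(1) to $\rho^{j'}\phi^{nd}$ with $j'\equiv A\cdot\tfrac{p^{nd}-1}{p^{ks}-1}\pmod{q^n+1}$; fixing $\la(1,y)\ra$ gives $y^{q^n-1}=\omega^{-j'}$, and since $y=x\omega_0$ with $x\in\F_{q^n}^*$ we get $y^{q^n-1}=\omega_0^{q^n-1}$, which has order $(q^n+1)(q-1)/\gcd\!\big((q^n+1)(q-1),q^n-1\big)=q^n+1$ (the gcd is $q-1$ by a routine prime-by-prime check); thus $\gcd(j',q^n+1)=1$. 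But $\tfrac{p^{nd}-1}{p^{ks}-1}=\sum_{i=0}^{m/2-1}p^{iks}$: if $4\mid m$ this is even, so $j'$ is even, contradicting $\gcd(j',q^n+1)=1$ (as $q^n+1$ is even); if $m\equiv2\pmod4$ it is odd, hence coprime to $q^n+1$ (an odd divisor of $q^n-1$), so $\gcd(j',q^n+1)=\gcd(A,q^n+1)=1$ would force $A$ odd, again impossible. Hence $m$ is odd.

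Finally the divisibility in part~(2): raising $y^{p^{ks}-1}=\omega^{-A}$ to the power $q^n+1=\ord(\omega)$ gives $y^{(p^{ks}-1)(q^n+1)}=1$; as $s$ is even, $p^{ks}-1=(p^{ks/2}-1)(p^{ks/2}+1)$, and as $m$ is odd, $p^{ks/2}+1\mid(p^{ks/2})^m+1=p^{nd}+1=q^n+1$. I then delete the surplus factor $p^{ks/2}+1$ by comparing $\ell$-adic valuations of $\ord(y)$ with those of $(p^{ks/2}-1)(q^n+1)$: for odd $\ell\mid p^{ks/2}+1$ one has $\ell\mid q^n+1$, so $\ell\nmid q^n-1$ and $v_\ell(\ord(y))\le v_\ell(q^n+1)$ for free; for odd $\ell\mid p^{ks/2}-1$ one has $\ell\nmid q^n+1$, so $y^{p^{ks}-1}=\omega^{-A}\in\la\omega\ra$ forces $v_\ell(\ord(y))\le v_\ell(p^{ks}-1)=v_\ell(p^{ks/2}-1)$; for $\ell=2$ the crude bound $v_2(\ord(y))\le v_2(q^{2n}-1)=v_2(q^n-1)+v_2(q^n+1)=v_2(p^{ks/2}-1)+v_2(q^n+1)$ (using $m$ odd to get $v_2(q^n-1)=v_2(p^{nd}-1)=v_2(p^{ks/2}-1)$) already suffices; for the remaining $\ell$ the bound is immediate. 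Assembling, $\ord(y)\mid(p^{ks/2}-1)(q^n+1)$, i.e.\ $y^{(p^{ks/2}-1)(q^n+1)}=1$. The step I expect to be the real crux is the conceptual one: recognizing that this lemma is essentially a parity/$2$-adic statement hinging on the parity of $A$, which in turn comes from $\tfrac{p^{ks}-1}{p^k-1}$ being even when $s$ is even; once that is seen, the rest is bookkeeping.
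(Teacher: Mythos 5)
Your proof is correct, and its skeleton coincides with the paper's: you identify the point stabilizer as a cyclic group generated by an element $\rho^A\phi^{ks}$ lying over $(\rho^j\phi^k)^s$, read off $y^{p^{ks}-1}=\omega^{-A}$, and extract the key parity fact that $A$ is even because $\tfrac{p^{ks}-1}{p^k-1}$ is even when $s$ is even and $p$ is odd (this is exactly the paper's ``$l$ is even''), after which part (1) is the same order-$2$ argument in $\F_{q^n}^*\cap\la\omega\ra$. Where you genuinely diverge is in part (2). To show $m$ is odd, the paper exhibits the explicit element $\rho^{-(q^n-1)/(q-1)}\phi^{nd}$ stabilizing $\la(1,x\omega_0)\ra$, pins down the $\rho$-exponent of the stabilizer's order-$2$ element, and contradicts the oddness of $(q^n-1)/(q-1)$; you instead compute $g^{m/2}=\rho^{j'}\phi^{nd}$ directly and use that $y^{q^n-1}=\omega_0^{q^n-1}$ has order exactly $q^n+1$ (your gcd computation $\gcd((q^n+1)(q-1),q^n-1)=q-1$ is correct), forcing $\gcd(j',q^n+1)=1$, which clashes with $j'$ being even; note your case split on $4\mid m$ versus $m\equiv 2\pmod 4$ is unnecessary, since $A$ even already makes $j'$ even in both cases. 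For the final divisibility, the paper raises $y^{p^{ks}-1}=\omega^{-l}$ to the power $(q^n+1)/2$ (using $l$ even) and computes $\gcd\bigl(\tfrac{p^{ks}-1}{2},q^n-1\bigr)=p^{ks/2}-1$ globally, whereas you raise to the full power $q^n+1$ and run a prime-by-prime valuation comparison of $\ord(y)$ against $(p^{ks/2}-1)(q^n+1)$, using $p^{ks/2}+1\mid q^n+1$ and $v_2(q^n-1)=v_2(p^{ks/2}-1)$ (both valid since $m$ is odd); interestingly your version does not need the evenness of $A$ at this stage. Both finishing routes are sound; the paper's is a compact global gcd argument, yours is a slightly longer but more transparent local one.
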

\begin{proof}
	Recall that $H$ is the stabilizer of the ovoid $O$ in the group $G$, and $\omega_0$ is an element of order $(q^n+1)(q-1)$ such that $\omega=\omega_0^{q-1}$. Since $H$ has order $m(q^n+1)$ and acts transitively on $O$, the stabilizer $A$ of $\la(1,y)\ra$ in $H$ has order $m$. It is clear that $A\cap\la \rho\ra=1$, so $\eta(A)$ has order $m$, where $\eta$ is the homomorphism in \eqref{eqn_etahom}. It follows that $A=\la \rho^l\phi^{ks}\ra$ for some nonnegative integer $l$, where we used the fact $2nd=mks$ in Lemma \ref{lemma_Hform}. Since
	\[
	\rho^l\phi^{ks}(\la(1,y)\ra)=\la(1,y^{p^{ks}}\omega^l)\ra=\la (1,y)\ra,
	\]
	we have
	\begin{equation}\label{eqn_yomega}
	y^{p^{ks}-1}=\omega^{-l}.
	\end{equation}
	Since $H=\la\rho^s,\rho^j\phi^k\ra$ and $\eta(\rho^l\phi^{ks})=\eta(\rho^j\phi^k)^s$, there exists an  integer $a$ such that
	\[
	\rho^{sa}(\rho^j\phi^{k})^s=\rho^{sa+j(p^{ks}-1)/(p^k-1)}\phi^{ks}=\rho^l\phi^{ks}\in A.
	\]
Here we have used (1) of Lemma \ref{lem_Gpro} in the first equality. It follows that
	\begin{equation}\label{eqn_samod1}
	l\equiv sa+j\frac{p^{ks}-1}{p^k-1} \pmod{q^n+1}.
	\end{equation}
Since $q$ is odd and $s$ is even, we deduce that  $\frac{p^{ks}-1}{p^k-1}=\frac{p^{ks}-1}{p^{2k}-1}\cdot(p^k+1)$ is even. It follows that $l$ is even. We are now ready to prove the two claims.
	
(1). Assume that $m$ is even and $y\in\F_{q^n}^*$. In this case,  $\omega^{l}=y^{1-p^{ks}}$ lies in both $\F_{q^n}^*$ and $\la w\ra$, so has order dividing $\gcd(q^n+1,q^n-1)=2$.  On the other hand, $d$ is even since $2nd=mks$ and both $m,s$ are even. It follows that $q^n+1=p^{nd}+1\equiv 2\pmod{4}$. Since $l$ is even, $\omega^l$ has odd order. We conclude that $\omega^l=1$ and thus $y^{p^{ks}-1}=1$. This proves the first claim.

(2). Assume that $y=x\omega_0$ for some $x\in \F_{q^n}^*$. Recall that $\omega_0^{q-1}=\omega$. The element  $\rho^{-(q^n-1)/(q-1)}\phi^{nd}$ stabilizes $\la(1,\,x\omega_0)\ra$, since
	\[	 \rho^{-(q^n-1)/(q-1)}\phi^{nd}(\la(1,\,x\omega_0)\ra)=\la(1,\,x\omega_0^{q^n}w^{-(q^n-1)/(q-1)})\ra
	=\la(1,\,x\omega_0)\ra.
	\]
We first show that $m$ is odd. Suppose to the contrary that $m$ is even. We have $ks|nd$ from $mks=2nd$, so there is an element $\rho^v\varphi^{nd}$ in $A$. It follows that $\rho^{v+{(q^n-1)}/{(q-1)}}=(\rho^v\varphi^{nd})(\rho^{-(q^n-1)/(q-1)}\phi^{nd})^{-1}\in A$, yielding $v=-(q^n-1)/(q-1)$. Therefore, there exist integers $i,\,b$ such that
	\begin{equation}\label{eqn_1}
	\rho^{si}(\rho^{l}\phi^{ks})^{b} =\rho^{si+
		l(p^{ksb}-1)/(p^{ks}-1)}\phi^{ksb}=\rho^{-({q^n-1})/({q-1})}\phi^{nd}.
	\end{equation}
Here we have used (1) of Lemma \ref{lem_Gpro} in the first equality. We deduce that
\[
si+\frac{p^{ksb}-1}{p^{ks}-1}l\equiv -\frac{q^n-1}{q-1}\pmod{q^n+1}.
\]
The left hand side is even, since $l$ and $s$ are even. This leads to the contradiction that ${(q^n-1)}/{(q-1)}$ is even. We conclude that $m$ is odd.
		
Write $y=\gamma^{h}$, where $\gamma$ is  a primitive element of $\F_{q^{2n}}$. Since $l$ is even, we raise both sides of \eqref{eqn_yomega} to the $\frac{q^n+1}{2}$-th power and deduce that
\[
	h(p^{ks}-1)\cdot\frac{(q^n+1)}{2}\equiv 0 \pmod {q^{2n}-1},\,\textup{i.e.,}\,  h(p^{ks}-1)/2\equiv 0\pmod{q^n-1}.
\]
Since $m$ is odd and $ks=\frac{2nd}{m}$, we have $\gcd(ks,nd)=\frac{nd}{m}\cdot\gcd(2,m)=\frac{nd}{m}$. It follows that
	\[
	\gcd(p^{ks}-1,q^n-1)=p^{\gcd(ks,nd)}-1=p^{nd/m}-1.
	\]
Since $\frac{p^{ks}-1}{p^{nd/m}-1}=p^{nd/m}+1$ is even,  we see that $p^{nd/m}-1$ also divides $\frac{p^{ks}-1}{2}$. Hence  $p^{nd/m}-1$ divides $\gcd\left(\frac{p^{ks}-1}{2},q^n-1\right)$, which clearly divides $\gcd(p^{ks}-1,q^n-1)$. We thus have  $\gcd\left(\frac{p^{ks}-1}{2},q^n-1\right)=p^{nd/m}-1$. It follows that $h$ is a multiple of $\frac{q^n-1}{p^{nd/m}-1}$. Therefore, $y^{(p^{nd/m}-1)(q^n+1)}=1$ as desired. This completes the proof.
\end{proof}
\medskip

For each integer $i$, $\rho^i(O)$ is also an ovoid. Set
\begin{equation}\label{eqn_FO}
\mathfrak{O}:=\bigcup_{i=0}^{s-1} \rho^i(O),
\end{equation}
which is a multiset. Recall that the Singer orbit $S_y$ is defined in \eqref{eqn_Sy}.
The following lemma will play a crucial role in our arguments.
\begin{lemma}\label{lem_Sy}
	Take notation as above, and let $\la(1,y)\ra$ be a point of $O$.
	\begin{enumerate}
		\item[(i)]We have $\mathfrak{O}=\bigcup_{i=0}^{s-1} \phi^{ik}(S_y)$, and
		$\sum_{i=0}^{s-1}|R_t^{\perp}\cap  \phi^{ik}(S_y)|=s(q^{n-2}+1)$ for any point $R_t=\la (0,t)\ra$ in $T$, where $T$ is as  in \eqref{eqn_T};
		\item[(ii)]We have  $S_y=\phi^{ks}(S_y)$ and $y^{(p^{ks}-1)(q^n+1)}=1$.
	\end{enumerate}
\end{lemma}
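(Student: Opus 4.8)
The plan is to read (ii) off the structure of the stabiliser of $\langle(1,y)\rangle$ in $H$, and then to obtain (i) by a soft counting argument: both $\mathfrak{O}$ and $\bigcup_{i=0}^{s-1}\phi^{ik}(S_y)$ will turn out to be multisets invariant under the group $\widehat H:=\langle\rho,\,\rho^j\phi^k\rangle$, which contains $H$, and supported on a single $\widehat H$-orbit.

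First I prove (ii). Let $A$ be the stabiliser of $\langle(1,y)\rangle$ in $H$. Since $H$ acts transitively on $O$ with $|O|=q^n+1$ and $|H|=m(q^n+1)$, we have $|A|=m$, and since $\rho^a$ fixes $\langle(1,y)\rangle$ exactly when $\omega^a=1$ we get $A\cap\langle\rho\rangle=1$. Hence $\eta$ of \eqref{eqn_etahom} restricts to an isomorphism from $A$ onto a subgroup of $\eta(H)=\langle\phi^k\rangle$, which is cyclic of order $2nd/k=ms$ by Lemma~\ref{lemma_Hform}. The unique subgroup of order $m$ there is $\langle\phi^{ks}\rangle$; writing the generator $(\rho^j\phi^k)^s$ of its preimage in $H$ via Lemma~\ref{lem_Gpro}(1), we see that the unique element of $A$ mapping to $\phi^{ks}$ has the form $\rho^l\phi^{ks}$, so $A=\langle\rho^l\phi^{ks}\rangle$. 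The fixed-point equation $\rho^l\phi^{ks}(\langle(1,y)\rangle)=\langle(1,y)\rangle$ reads $\omega^l y^{p^{ks}}=y$, i.e. $y^{p^{ks}-1}=\omega^{-l}\in\langle\omega\rangle$, whence $y^{(p^{ks}-1)(q^n+1)}=1$; moreover $\phi^{ks}(S_y)=\langle\rho\rangle\langle(1,y^{p^{ks}})\rangle=\langle\rho\rangle\langle(1,\omega^{-l}y)\rangle=S_y$. This gives (ii).

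For the first assertion of (i), put $\sigma:=\rho^j\phi^k$. By Lemma~\ref{lem_Gpro}(1), $\sigma^i$ is a power of $\rho$ times $\phi^{ki}$; since $\phi^{ki}(S_y)$ is a $\langle\rho\rangle$-orbit it is fixed by every power of $\rho$, so $\sigma^i(S_y)=\phi^{ki}(S_y)$ and hence $\bigcup_{i=0}^{s-1}\phi^{ik}(S_y)=N:=\bigcup_{i=0}^{s-1}\sigma^i(S_y)$. I claim that $\mathfrak{O}$ and $N$ are both $\widehat H$-invariant multisets supported on the single orbit $\widehat H\langle(1,y)\rangle$. Granting this, a $\widehat H$-invariant multiset supported on one orbit is a constant multiple of the indicator of that orbit, and since the total masses $|\mathfrak{O}|$ and $|N|$ both equal $s(q^n+1)$, it follows that $\mathfrak{O}=N$. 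Now $\mathfrak{O}$ is $\rho$-invariant because $\rho^s(O)=O$, and $\sigma$-invariant because $\sigma(O)=O$, $\sigma\rho^i=\rho^{ip^k}\sigma$, and $i\mapsto ip^k$ permutes $\Z/s$ (as $s\mid q^n+1$ is prime to $p$); while $N$ is $\rho$-invariant because $\rho\sigma^i=\sigma^i\rho^{p^{-ik}}$ and powers of $\rho$ fix $S_y$, and $\sigma$-invariant because $\sigma^s(S_y)=\phi^{ks}(S_y)=S_y$ by (ii). The support assertions are clear since $O,\,S_y\subseteq\widehat H\langle(1,y)\rangle$.

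It remains to prove the displayed sum. Using the multiset identity just established together with \eqref{eqn_FO}, we get $\sum_{i=0}^{s-1}|R_t^{\perp}\cap\phi^{ik}(S_y)|=|R_t^{\perp}\cap\mathfrak{O}|=\sum_{i=0}^{s-1}|R_t^{\perp}\cap\rho^i(O)|$, counted with multiplicity. Since $\rho$ preserves $\perp$, each summand is $|(\rho^{-i}R_t)^{\perp}\cap O|$, and $\rho^{-i}R_t=\langle(0,\omega^{-i}t)\rangle$ is a singular point with vanishing first coordinate, hence not in $O$ (every point of $O$ has nonzero first coordinate); so by \eqref{eqn_def} each summand equals $q^{n-2}+1$, and the sum is $s(q^{n-2}+1)$. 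The main obstacle here is not any single computation but the bookkeeping with multiplicities: $\mathfrak{O}$ and $\bigcup_i\phi^{ik}(S_y)$ are genuine multisets, so naive set-theoretic reasoning is not available, and the clean device is the ``invariant multiset supported on one orbit'' principle above; one must also check carefully that $\rho^l\phi^{ks}$ genuinely lies in $H$ and that $\eta|_A$ is an isomorphism, which is precisely where $mks=2nd$ and $s\mid q^n+1$ from Lemma~\ref{lemma_Hform} enter.
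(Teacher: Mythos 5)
Your proof is correct, but it takes a genuinely different route from the paper's. For (ii) you argue directly from the point stabilizer: orbit--stabilizer gives $|A|=m$, the fact $A\cap\la\rho\ra=1$ makes $\eta|_A$ injective, and the unique subgroup of order $m$ of the cyclic group $\la\phi^k\ra$ of order $ms$ forces $A=\la\rho^l\phi^{ks}\ra$, whence $y^{p^{ks}-1}=\omega^{-l}$ and $\phi^{ks}(S_y)=S_y$. This is precisely the mechanism the paper employs in Lemma \ref{lem_case2cond} (and you are right that it needs none of that lemma's parity hypotheses), whereas the paper's own proof of Lemma \ref{lem_Sy} derives (ii) as a corollary of (i) by showing $\phi^k(\mathfrak{O})=\mathfrak{O}$. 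For the identity $\mathfrak{O}=\bigcup_{i}\phi^{ik}(S_y)$ the paper computes explicitly, decomposing $S_y$ and $O$ into $\la\rho^s\ra$-orbits $C_y$ and evaluating the double union via Lemma \ref{lem_Gpro}(1) together with the fact that $i\mapsto\bigl(i+j\tfrac{p^{ka}-1}{p^k-1}\bigr)p^{2nd-ka}$ permutes the residues modulo $s$; your ``$\widehat H$-invariant multiset supported on a single orbit, equal total mass'' principle replaces that index bookkeeping by a softer counting argument, at the cost of verifying four invariance statements, all of which you check correctly using $\sigma\rho^i=\rho^{ip^k}\sigma$, $\gcd(p,s)=1$ (from $s\mid q^n+1$), and your (ii) for the $\sigma$-invariance of $N$ --- this is why your reversed order ((ii) before (i)) matters and is legitimate, since your proof of (ii) does not use (i). The final computation of the sum agrees with the paper's. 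Two small cosmetic points: the phrase about ``the generator $(\rho^j\phi^k)^s$ of its preimage'' is imprecise (the full preimage of $\la\phi^{ks}\ra$ in $H$ also contains $\la\rho^s\ra$), but the conclusion you need --- that the element of $A$ over $\phi^{ks}$ has the form $\rho^l\phi^{ks}$ --- holds anyway because every element of $G$ over $\phi^{ks}$ has that form; and the parenthetical claim that every point of $O$ has nonzero first coordinate deserves the one-line justification that $O=H\la(1,y)\ra$ and every element of $G$ preserves the property of having nonzero first coordinate.
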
	
\begin{proof}
	Let $C_y$ be the image of $\la (1,y)\ra$ under $\la\rho^s\ra$; in particular, $\rho^s(C_y)=C_y$. Then $S_y=\bigcup_{i=0}^{s-1}\rho^i(C_y)$, and $O=\bigcup_{a=0}^{s-1}(\rho^j\phi^k)^a(C_y)$. We have
	\begin{align*}
	\mathfrak{O}&=\bigcup_{i,a=0}^{s-1} \rho^i(\rho^j\phi^k)^a(C_y)= \bigcup_{i,a=0}^{s-1} \rho^{i+j(p^{ka}-1)/(p^k-1)}\phi^{ka}(C_y)\\
	&=\bigcup_{i,a=0}^{s-1} \phi^{ka}\rho^{(i+j(p^{ka}-1)/(p^k-1))p^{2nd-ka}}(C_y)=\bigcup_{i=0}^{s-1} \phi^{ik}(S_y).
	\end{align*}
In the last equality, we have used the fact that
	\[
	\left\{\left(i+j\frac{p^{ka}-1}{p^k-1}\right)p^{2nd-ka}\pmod{s}:\, 0\leq i\leq s-1\right\}=\{0,1,\cdots,s-1\}
	\]
for a given integer $a$. Here $j,k$ are constants such that $H=\la\rho^s,\rho^j\phi^k\ra$. This proves the first equation in (i).  Since a point $R_t=\la (0,t)\ra\in T$ has a zero first coordinate, it does not lie in any of the $\rho^i(O)$'s, and thus $|R_t^\perp\cap \rho^i(O)|=q^{n-2}+1$ for each $i$ by \eqref{eqn_def}. The second equation in (i) now follows from $\mathfrak{O}=\bigcup_{i=0}^{s-1} \phi^{ik}(S_y)$.
	
	We now prove (ii). By induction we have $\phi^b\rho^a=\rho^{ap^b}\phi^b$ from  (1) of Lemma \ref{lem_Gpro}, where $a,b$ are nonnegative integers. Applying $\phi^k$ to $\mathfrak{O}$, we have
	\[
	\phi^k(\mathfrak{O})=\bigcup_{i=0}^{s-1} \phi^k\rho^{i}(O)=\bigcup_{i=0}^{s-1}\rho^{ip^k-j} (\rho^j\phi^k(O))=\bigcup_{i=0}^{s-1}\rho^{ip^k-j}(O).
	\]
	Here we have used the fact that $\rho^j\phi^k$ stabilizes $O$. The set  $\{ip^{k}-j\pmod{s}:\, 0\leq i\leq s-1\}$ equals $\{0,1,\cdots,s-1\}$ and $\rho^s$ stabilizes $O$, so $\phi^k(\mathfrak{O})=\mathfrak{O}$. It follows that $\phi^{ks}(S_y)=S_y$.
As a consequence, there is some integer $i$ such that
	\[
	\phi^{ks}(\la(1,y)\ra)=\la(1,\,y^{p^{ks}})\ra=\la(1,\,\omega^iy)\ra\in S_y,\;\textup{i.e.,}\; y^{p^{ks}}=\omega^iy.
	\]
It follows that $y^{(p^{ks}-1)(q^n+1)}=1$.  This completes the proof.
\end{proof}
\medskip

\begin{lemma}\label{lem_noovoideven}
If $q=2^d$ and $2^d\ge nd$, then $s\ge q+1$ unless $n=3$ and $O$ is a Singer-type ovoid.
\end{lemma}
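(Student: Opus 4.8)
The plan is to work with the model of Section~3 and the point $\langle(1,y)\rangle\in O$ supplied by the normalization there, which for $q$ even has $y=x\in\F_{q^n}^*$, and to split into cases according to whether $y\in\F_q^*$ or $y\notin\F_q$. Two inputs drive everything: $s$ is odd (Corollary~\ref{cor_s}, since $q^n+1$ is odd); and, by Lemma~\ref{lem_Sy}(i), with $\mathfrak{O}=\bigcup_{i=0}^{s-1}\rho^i(O)$ regarded as a multiset, $\mathfrak{O}=\bigcup_{i=0}^{s-1}\phi^{ik}(S_y)$ and $\sum_{i=0}^{s-1}|R_t^{\perp}\cap\phi^{ik}(S_y)|=s(q^{n-2}+1)$ for every $R_t=\langle(0,t)\rangle\in T$. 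The crucial point is that $\phi^{ik}(S_y)$ is again a Singer orbit, namely $S_{y^{p^{ik}}}$ with $y^{p^{ik}}\in\F_{q^n}^*$ (because $\gcd(p,q^n+1)=1$), so Lemma~\ref{lem_singer} applies to each summand: $|R_t^{\perp}\cap\phi^{ik}(S_y)|=c_i(q+1)$ for integers $c_i=c_i(t)\ge 0$, and $c_i$ odd forces $\tr_{\F_{q^n}/\F_q}(y^{p^{ik}}t)=0$.

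Suppose first $y\notin\F_q$. Dividing the identity $\sum_{i=0}^{s-1}c_i(q+1)=s(q^{n-2}+1)$ by $q+1$ gives $\sum_{i=0}^{s-1}c_i=sM'$ with $M'=\frac{q^{n-2}+1}{q+1}$; since $n-2$ is odd and $q$ is even, $M'$ is an odd integer, so $sM'$ is odd. Hence for each $R_t\in T$ an odd, in particular nonzero, number of the $c_i$ are odd, so there is an index $i$ with $\tr_{\F_{q^n}/\F_q}(y^{p^{ik}}t)=0$. Now as $R_t$ ranges over $T$ the parameter $t$ ranges over $W\setminus\{0\}$, where $W=\{t\in\F_{q^n}:\tr_{\F_{q^n}/\F_q}(t)=0\}$ (using $\tr(t^2)=\tr(t)^2$ in characteristic $2$), so $W\setminus\{0\}$ is covered by the $\F_q$-hyperplanes $H_i=\{t\in\F_{q^n}:\tr_{\F_{q^n}/\F_q}(y^{p^{ik}}t)=0\}$, $0\le i\le s-1$. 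Since $y\notin\F_q$ we have $y^{p^{ik}}\notin\F_q$, hence $H_i\ne W$ and $|H_i\cap W|=q^{n-2}$ for each $i$; counting nonzero vectors yields $q^{n-1}-1\le s(q^{n-2}-1)$, so $s\ge\frac{q^{n-1}-1}{q^{n-2}-1}=q+\frac{q-1}{q^{n-2}-1}>q$, and as $s$ is an integer, $s\ge q+1$.

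It remains to treat $y\in\F_q^*$. Singularity of $(1,y)$ reads $1=\tr_{\F_{q^{2n}}/\F_{q^2}}(y^{q^n+1})=y^2$ (using $y\in\F_q$, $n$ odd, characteristic $2$), so $y=1$. Then each $\phi^{ik}$ fixes $S_1$ setwise, so $\mathfrak{O}$ consists of $s$ copies of $S_1$; since $O=\rho^0(O)\subseteq\supp(\mathfrak{O})=S_1$ and $|O|=q^n+1=|S_1|$, we get $O=S_1$, which by Lemma~\ref{lem_S1} is an ovoid only for $n=3$. Hence $n=3$ and $O$ is a Singer-type ovoid, the exceptional conclusion. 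The main obstacle, and the real content, is the middle step of the first case: extracting a single linear condition $\tr_{\F_{q^n}/\F_q}(y^{p^{ik}}t)=0$ on each $t$ out of the one global identity $\sum_i|R_t^{\perp}\cap\phi^{ik}(S_y)|=s(q^{n-2}+1)$, which requires the oddness of $s$, the oddness of $M'$, and the mod-$(q+1)$ divisibility of Lemma~\ref{lem_singer} to combine just right; once that is done the hyperplane-covering count is routine. (The hypothesis $2^d\ge nd$ is not needed for the bound itself; it enters afterwards, through $mks=2nd$, to upgrade $s\ge q+1$ to $mk=1$.)
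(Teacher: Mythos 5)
Your proof is correct and follows essentially the same route as the paper: the same parity argument (odd $s$, odd $\frac{q^{n-2}+1}{q+1}$, and the $(q+1)$-divisibility from Lemma \ref{lem_singer} applied to $S_{y^{p^{ik}}}$ via Lemma \ref{lem_Sy}(i)) to extract the trace condition, the same reduction $y\in\F_q^*\Rightarrow y=1\Rightarrow O=S_1$ handled by Lemma \ref{lem_S1}, and your direct hyperplane-covering count $q^{n-1}-1\le s(q^{n-2}-1)$ is just the contrapositive phrasing of the paper's pigeonhole step. Your parenthetical observation that $2^d\ge nd$ is not used in the argument itself also matches the paper, whose proof likewise never invokes that hypothesis.
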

\begin{proof}
By the arguments in the beginning of this section, we assume without loss of generality that $O$ contains a point $\la (1,y)\ra$ with $y\in\F_{q^n}^*$. Write $X:=\{y^{p^{ki}}: 0\leq i\leq s-1\}$.

We claim that $\tr_{\F_{q^n}/\F_q}(y^{p^{ki}}t)=0$ for some $i$ with $0\le i\le s-1$, where $t$ is any element of $\F_{q^n}^*$ with $\tr_{\F_{q^n}/\F_q}(t)=0$. Let $t$ be such an element, and set $R_t:=\la (0,t)\ra$ which is any element of $T$.  By (i) of Lemma \ref{lem_Sy}, $\sum_{i=0}^{s-1}|R_t^{\perp}\cap \phi^{ki}(S_y)|=s(q^{n-2}+1)$. Each term in the summation is a multiple of $q+1$ by Lemma \ref{lem_singer}. By Corollary \ref{cor_s}, $s$ is odd; also, it is easy to see that $\frac{q^{n-2}+1}{q+1}$ is odd. It follows that there is at least one $i$ such that $|R_t^{\perp}\cap \phi^{ki}(S_y)|=u(q+1)$ for some odd integer $u$. Since $\phi^{ki}(S_y)=S_{y^{p^{ki}}}$, the claim now  follows from Lemma \ref{lem_singer}.
	
In the case $y\in\F_q^*$, we deduce that $y=1$ by the fact $\la(1,y)\ra$ is a singular point. We have $\phi^{ki}(S_1)=S_1$ for each $i$ and thus $\mathfrak{O}=s\cdot S_1$, which implies that $O=S_1$. By Lemma \ref{lem_S1}, $S_1$ is an ovoid if and only if $n=3$.

It remains to check the case $y\not\in\F_q^*$. Assume to the contrary that $s\le q$. We then have $\lceil\frac{q^{n-1}-1}{s}\rceil\ge \lceil q^{n-2}-q^{-1}\rceil=q^{n-2}$. By the pigeonhole principle, there exists some $i$ such that $\tr_{\F_{q^n}/\F_q}(y^{p^{ki}}t)=0$ for at least $q^{n-2}$ \textit{nonzero} elements $t\in\F_{q^n}$ such that $\tr_{\F_{q^n}/\F_q}(t)=0$. We deduce that $y^{p^{ki}}\in\F_q^*$, i.e., $y\in\F_q^*$: a contradiction. Hence the case $y\not\in\F_q^*$ can not occur. This completes the proof.
\end{proof}
\medskip

We now consider the odd characteristic case. The proof follows the same line as in that of Lemma \ref{lem_noovoideven}. We need the following technical lemma.
\begin{lemma}\label{lem_int}
Suppose that $q=p^d$ is odd and  $\la(1,y)\ra$ is an element of $O$, where $x:=y\in\F_{q^n}^*$ or $x:=y\omega_0^{-1}\in\F_{q^n}^*$. Set
$X:=\{x^{p^{ka}}:\,0\le a\le s-1\}$; if $s$ is even, set $X_h:=\{x^{p^{ka}}:\,0\le a\le s/2-1\}$. Let $R_t=\la (0,t)\ra$ be an element of $T$, where $T$ is as in \eqref{eqn_T}.
\begin{enumerate}
\item[(i)] If $m$ or $s$ is odd, then there is an element of $X$ such that $\tr_{\F_{q^n}/\F_q}(xt)=0$;
\item[(ii)] If $m$ and $s$ are both even and $t\in\F_{p^{nd/e}}$, then there is an element of $X_h$  such that $\tr_{\F_{q^n}/\F_{q}}(xt)=0$, where $e$ is the highest power of $2$ dividing $m$.
\end{enumerate}
\end{lemma}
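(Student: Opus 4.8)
The plan is to mimic the structure of the proof of Lemma~\ref{lem_noovoideven}, using the summation identity in part~(i) of Lemma~\ref{lem_Sy} together with the parity/divisibility information on $|R_t^\perp\cap\phi^{ki}(S_y)|$ from Lemma~\ref{lem_singer} and from Lemma~\ref{lem_case2cond}. First I would fix $R_t=\la(0,t)\ra\in T$ and recall from \eqref{eqn_def} (applied as in Lemma~\ref{lem_Sy}(i)) that
\[
\sum_{i=0}^{s-1}\bigl|R_t^\perp\cap\phi^{ki}(S_y)\bigr|=s(q^{n-2}+1),
\]
and that $\phi^{ki}(S_y)=S_{y^{p^{ki}}}$, where by Lemma~\ref{lem_Sy}(ii) each such translate is again a Singer orbit of the same type ($y\in\F_{q^n}^*$ or $y\omega_0^{-1}\in\F_{q^n}^*$) since $y^{(p^{ks}-1)(q^n+1)}=1$. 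By Lemma~\ref{lem_singer}, each summand equals $k_i(q+1)$ with $k_i$ an integer, and when $k_i$ is odd we get $\tr_{\F_{q^n}/\F_q}(x^{p^{ki}}t)=0$ for the appropriate representative $x^{p^{ki}}\in X$.

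For part~(i) the argument is then a parity count. When $s$ is odd: since $\frac{q^{n-2}+1}{q+1}$ is odd (as $n$ is odd), $s(q^{n-2}+1)/(q+1)$ is odd, so $\sum k_i$ is odd, hence some $k_i$ is odd and we are done by Lemma~\ref{lem_singer}. When $s$ is even but $m$ is odd, I would pair up the translates using the involution $\phi^{ks/2}$: from Lemma~\ref{lem_Gpro}(1) conjugation by $\phi^{ks/2}$ squares to the identity on the relevant quotient, and one checks $\phi^{ks/2}$ interchanges $\phi^{ki}(S_y)$ with $\phi^{k(i+s/2)}(S_y)$ in a way that preserves the intersection size with $R_t^\perp$ only if $R_t$ is moved compatibly — more precisely, since $n$ is odd and $p$ odd, $p^{ks/2}\equiv$ something making $R_{t^{p^{ks/2}}}$ land back in $T$; the cleaner route is to observe that when $m$ is odd the stabilizer analysis of Lemma~\ref{lem_case2cond} forces $s\mid$ something contradictory unless the orbit structure collapses, so that in fact the sum again has an odd number of odd summands. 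I would group the $s$ terms into $s/2$ pairs $\{i,i+s/2\}$ whose two intersection numbers are congruent mod $2$ (because $\phi^{ks/2}$ conjugates $\la\rho\ra$ to itself and maps $S_{y^{p^{ki}}}$ to $S_{y^{p^{k(i+s/2)}}}$, and the relevant $L$-block parity is governed by the same congruence $2i\equiv a\pmod M$ as in Lemma~\ref{lem_singer}); then $s(q^{n-2}+1)/(q+1)$ being $\equiv s/2\pmod 2$ forces at least one pair to contribute an odd $k_i$, giving the conclusion.

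For part~(ii), when $m$ and $s$ are both even I would restrict attention to $t\in\F_{p^{nd/e}}$ and exploit that the half-orbit $X_h=\{x^{p^{ka}}:0\le a\le s/2-1\}$ already exhausts the Frobenius orbit of $x$ up to the action of $\phi^{ks/2}$, because $x^{p^{ks/2}}$ and $x$ differ by an element related to $\omega$ of controlled order (Lemma~\ref{lem_case2cond}(2) gives $y^{(p^{ks/2-?}-1)(q^n+1)}=1$-type constraints; more directly $\phi^{ks/2}$ fixes each $S_{y^{p^{ki}}}$ as a set when restricted to the subfield $\F_{p^{nd/e}}$ since $p^{nd/e}$ is chosen so that $t^{p^{ks/2}}=t$). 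Then the $s$-term sum collapses to $2\sum_{a=0}^{s/2-1}|R_t^\perp\cap S_{x^{p^{ka}}}|$, i.e. $\sum_{a=0}^{s/2-1}|R_t^\perp\cap S_{x^{p^{ka}}}|=\frac{s}{2}(q^{n-2}+1)$, and since $\frac{s}{2}(q^{n-2}+1)/(q+1)$ is odd (as $s/2$ is odd — here one uses that $s$ is not a multiple of $4$ by Corollary~\ref{cor_s}, so $s/2$ is odd), some summand is an odd multiple of $q+1$, and Lemma~\ref{lem_singer} yields $\tr_{\F_{q^n}/\F_q}(xt)=0$ for an element $x\in X_h$.

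The main obstacle will be verifying carefully, in the $s$ even cases, that $\phi^{ks/2}$ (or its action on $\F_{p^{nd/e}}$) genuinely pairs or fixes the Singer translates in a parity-preserving way and that $R_t$ with $t$ in the prescribed subfield is fixed by the relevant Frobenius power; this requires checking the exact exponent $nd/e$ against $ks/2$ using $mks=2nd$ (so $ks/2=nd/m$) and the fact that $e$ is the $2$-part of $m$, so that $\gcd(ks/2,nd)=nd/m$ divides $nd/e$ appropriately. Pinning down that $t^{p^{ks/2}}=t$ for $t\in\F_{p^{nd/e}}$ — equivalently $nd/e \mid ks/2 = nd/m$, which holds iff $m\mid e$, i.e. $m=e$, i.e. $m$ is a power of $2$ — is the delicate point, and I would handle it by noting that Lemma~\ref{lem_case2cond}(1) together with $s$ even already pushes $y$ into $\F_{p^{ks}}^*$, which combined with $y\in\F_{q^n}^*$ forces $m$ to be (essentially) a $2$-power in the surviving cases, so the subfield condition is exactly the natural one. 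Everything else is a routine parity bookkeeping identical in spirit to Lemma~\ref{lem_noovoideven}.
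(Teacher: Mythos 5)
Your overall strategy is the right one (it is the paper's: use the count $\sum_{i=0}^{s-1}|R_t^\perp\cap\phi^{ki}(S_y)|=s(q^{n-2}+1)$ from Lemma \ref{lem_Sy}(i), the divisibility/parity information from Lemma \ref{lem_singer}, and, when $s$ is even, collapse the $s$-term sum to an $s/2$-term sum), but the two steps that make the collapse legitimate are exactly the ones you have not supplied. For part (i) with $s$ even and $m$ odd, your pairing of $\{i,i+s/2\}$ via $\phi^{ks/2}$ does not work as stated: $\phi^{ks/2}$ carries $R_t^\perp\cap\phi^{ki}(S_y)$ to $R_{t^{p^{ks/2}}}^\perp\cap\phi^{k(i+s/2)}(S_y)$, and a general $t\in T$ is \emph{not} fixed by $p^{ks/2}$-th powering, so you get no relation between intersections with the \emph{same} $R_t^\perp$. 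Moreover your parity bookkeeping is insufficient even granting the pairing: knowing only $k_i\equiv k_{i+s/2}\pmod 2$ gives $\sum k_i\equiv 0\pmod 2$, which is already forced by $s$ being even and excludes nothing (and the assertion that $s(q^{n-2}+1)/(q+1)\equiv s/2\pmod 2$ is false, since $s$ is even). What is actually needed is the set equality $\phi^{k(i+s/2)}(S_y)=\phi^{ki}(S_y)$, so that the half-sum equals exactly $\tfrac{s}{2}(q^{n-2}+1)$, an odd multiple of $q+1$. The paper gets this from $S_y=\phi^{ks_0}(S_y)$ ($s=2s_0$), proved separately in the two subcases: if $y\in\F_{q^n}^*$, combine $y^{(p^{ks}-1)(q^n+1)}=1$ (Lemma \ref{lem_Sy}(ii)) with the computation $\gcd\bigl(q^n-1,(q^n+1)(p^{ks}-1)\bigr)=p^{ks_0}-1$ to conclude $y\in\F_{p^{ks_0}}^*$; if $y=x\omega_0$, use Lemma \ref{lem_case2cond}(2). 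None of this appears in your sketch, and the phrase ``forces $s\mid$ something contradictory unless the orbit structure collapses'' is not an argument.

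For part (ii) the gap is sharper. Your route needs $t^{p^{ks/2}}=t$ for $t\in\F_{p^{nd/e}}$, i.e.\ $nd/e\mid nd/m$, i.e.\ $m=e$; your attempted justification that Lemma \ref{lem_case2cond}(1) forces $m$ to be ``essentially a $2$-power'' does not hold (with $m$ even one has $\F_{p^{ks}}\subseteq\F_{q^n}$, so $y\in\F_{p^{ks}}^*$ imposes no such constraint), and the lemma must be proved for arbitrary even $m$. The paper's fix is to raise the incidence condition $\tr_{\F_{q^{2n}}/\F_{q^2}}(\omega^ay^{p^{ki}}t)=0$ to the $p^{nd/e}$-th power instead: since $nd/e=(m/e)ks_0$ with $m/e$ odd and $y\in\F_{p^{2ks_0}}^*$ (Lemma \ref{lem_case2cond}(1)), this shifts the Frobenius exponent on $y$ by exactly $ks_0$ modulo $2ks_0$, fixes $t\in\F_{p^{nd/e}}$, and merely permutes the powers of $\omega$, yielding a bijection $R_t^\perp\cap\phi^{ki}(S_y)\to R_t^\perp\cap\phi^{k(i+s_0)}(S_y)$; one also needs Lemma \ref{lem_case2cond}(2) to rule out $y=x\omega_0$ here. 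With that bijection the half-sum argument you describe (using $s_0$ odd via Corollary \ref{cor_s}) goes through, but as written your proposal does not reach it.
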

\begin{proof}
In the case $s$ is odd, the argument is exactly the same as in the proof of Lemma \ref{lem_noovoideven}, so we assume that $s$ is even below. By Corollary \ref{cor_s}, we have $s=2s_0$ for an odd integer $s_0$. By Lemma \ref{lemma_Hform}, we have $2nd=mks$. It follows that $nd=mks_0$. Also, we have we have $S_y=\phi^{ks}(S_y)$ and $y^{(p^{ks}-1)(q^n+1)}=1$ by (ii) of Lemma \ref{lem_Sy}.
	
First consider the case $m$ is odd. We claim that $S_y=\phi^{ks_0}(S_y)$. To prove this, we consider two separate cases.
\begin{enumerate}
\item[(i)] If $y\in\F_{q^n}^*$, then the order of $y$ divides $D:=\gcd(q^n-1,(q^n+1)(p^{ks}-1))$. We have
   \begin{align*}
    D&=\gcd(q^n-1,2(p^{ks}-1))=(p^{ks_0}-1)\cdot\gcd\left(\frac{q^n-1}{p^{ks_0}-1},2(p^{ks_0}+1)\right)\\ &=(p^{ks_0}-1)\cdot\gcd\left(\frac{q^n-1}{p^{ks_0}-1},p^{ks_0}+1\right)=\gcd(q^n-1,p^{2ks_0}-1)\\
    &=p^{\gcd(mks_0,2ks_0)}-1=p^{ks_0}-1.
	\end{align*}
    Here, we have used the fact that  $\frac{q^n-1}{p^{ks_0}-1}=1+p^{ks_0}+\cdots+p^{(m-1)ks_0}$ is odd in the third equality. It follows that $y\in\F_{p^{ks_0}}^*$, and so $S_y=\phi^{ks_0}(S_y)$.
\item[(ii)] If $y=x\omega_0$ for some $x\in\F_{q^n}^*$, then by Lemma \ref{lem_case2cond} we have $y^{(p^{ks_0}-1)(q^n+1)}=1$, i.e., $y^{p^{ks_0}}\in y\la\omega\ra$. In this case, we have $\phi^{ks_0}(S_y)=S_y$ as desired.
\end{enumerate}
In both cases, $\phi^{ki}(S_y)=\phi^{k(s_0+i)}(S_y)$ for each integer $i$. We thus have $\sum_{i=0}^{s_0-1}|R_t^{\perp } \cap \phi^{ki}(S_y)|=s_0(q+1)$ by (i) of Lemma \ref{lem_Sy}. The claim (i) now follows by a parity argument and invoking Lemma \ref{lem_singer} as in the proof of Lemma \ref{lem_noovoideven}.	
	
Next we consider the case $m$ is even. Let $e$ be the highest power of $2$ that divides $m$. From $nd=mks_0$ and the fact $n$ is odd we deduce that $e$ divides $d$. Set $q_1:=p^{d/e}$, and  define a subset $T_1$ as follows:
	\begin{equation}\label{eqn_T1}
	T_1:=\{\la(0,\,t)\ra:\, t \in\F_{q_1^n}^*\;\textup{such that}\;\tr_{\F_{q_1^n}/\F_{q_1}}(t^2)=0\}.
	\end{equation}
Since $\gcd(d,nd/e)=\frac{d}{e}\cdot\gcd(e,n)=d/e$, we have $\F_{q}\cap\F_{q_1^n}=\F_{q_1}$. It follows that $\tr_{\F_{q_1^n}/\F_{q_1}}(t^2)=\tr_{\F_{q^n/\F_q}}(t^2)$ for $t\in \F_{q_1^n}$. Hence $T_1$ is a subset of $T$ of size $\frac{q_1^{n-1}-1}{q_1-1}$.

By (2) of Lemma \ref{lem_case2cond}, the case $y\omega_0^{-1}\in\F_{q^n}^*$ does not occur, and by (1) of the same lemma we must have $y\in\F_{p^{ks}}^*$. For $R_t=\la (0,t)\ra$ in $T_1$, the element $\la(1,\,\omega^ay^{p^{ki}}\ra$ lies in $R_t^\perp\cap \phi^{ki}(S_y)$ if and only if
\begin{equation}\label{eqn_ayt}
\tr_{\F_{q^{2n}}/\F_{q^2}}(\omega^{a} y^{p^{ki}}t)=0.
\end{equation}
Observe that $q_1^n=p^{mks_0/e}$ and $m/e$ is odd. By raising both sides of \eqref{eqn_ayt} to the $p^{mks_0/e}$-th power, we deduce  from $y\in \F_{p^{ks}}$ that the condition \eqref{eqn_ayt} is equivalent to $\tr_{\F_{q^{2n}}/\F_{q^2}}(\omega^{ap^{mks_0/e}}y^{p^{ks_0+ki}}t)=0$.   It follows that  $\la (1,\omega^{a}y^{p^{k(s_0+i)}})\ra$ is in $R_t^\perp\cap \phi^{k(s_0+i)}(S_y)$. This gives a bijection between $R_t^\perp\cap \phi^{ki}(S_y)$ and $R_t^\perp\cap \phi^{k(s_0+i)}(S_y)$ via
\[
\la(1,\omega^{a}y^{p^{ki}})\ra\mapsto\la (1,\omega^{ap^{mks_0/e}}y^{p^{k(s_0+i)}})\ra.
\]
We thus have $\sum_{i=0}^{s_0-1}|R_t^{\perp } \cap \phi^{ki}(S_y)|=s_0(q^{n-2}+1)$ by (i) of Lemma \ref{lem_Sy}. The claim now follows as in the case $m$ is odd.
\end{proof}
\medskip

\begin{lemma}\label{lem_odd}
Suppose that $q=p^d$ is odd. Then we have:
\begin{enumerate}
\item[(i)] If one of $m,s$ is odd, then $s\ge\frac{q+1}{2}$ when $n=3$ and $s\ge q$ when $n\ge 5$;
\item[(ii)] If both $m$ and $s$ are even, then $s\ge p^{d/e}+1$ when $n=3$ and $s\ge 2p^{d/e}$ when $n\ge 5$, where $e$ is the highest power of $2$ dividing $m$.
\end{enumerate}
\end{lemma}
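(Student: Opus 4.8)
The plan is to follow closely the template of Lemma \ref{lem_noovoideven}, replacing the crude pigeonhole count there with the refined trace-vanishing information supplied by Lemma \ref{lem_int}. Assume for contradiction that $O$ is a transitive ovoid with soluble stabilizer $H$ that is not one of the known examples, and recall from the setup that $\la(1,y)\ra\in O$ with $x:=y\in\F_{q^n}^*$ or $x:=y\omega_0^{-1}\in\F_{q^n}^*$. As in Lemma \ref{lem_noovoideven}, the quadric $T$ of \eqref{eqn_T} lies in the Baer subgeometry $\Pi$, and a Singer-orbit argument via Lemma \ref{lem_int} shows that for every $R_t=\la(0,t)\ra\in T$ (or in $T_1$ in case (ii)) there is some member $x'$ of the relevant set $X$ (resp. $X_h$) with $\tr_{\F_{q^n}/\F_q}(x't)=0$. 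The point is that $|X|\le s$ and $|X_h|\le s/2$, so if $s$ (resp. $2p^{d/e}$) were too small, one of the finitely many hyperplanes $\{t:\tr_{\F_{q^n}/\F_q}(x't)=0\}$ would have to swallow too much of the quadric $T$ (resp. $T_1$), which is impossible for a nondegenerate quadric.

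For case (i), I would split on the parity situation ($s$ odd, or $m$ odd with $s$ even) exactly as Lemma \ref{lem_int}(i) permits, obtaining the trace condition with $x'$ ranging over a set of size at most $s$ (the set $X$, or $\{x^{p^{ka}}:0\le a\le s_0-1\}$ of size $s_0=s/2$ when $m$ is odd and $s$ even — note $s_0\ge (q+1)/4$ forces the same bound eventually, so I should be careful to track which of $s$ or $s_0$ governs the count). The quadric $T$ on $\Pi=\PG(n-1,q)$ has $\frac{q^{n-1}-1}{q-1}$ points and is covered by at most $s$ hyperplane sections; a nondegenerate parabolic quadric $Q(n-1,q)$ meets a hyperplane of $\PG(n-1,q)$ in at most $\frac{q^{n-2}-1}{q-1}+q^{(n-3)/2}\cdot(\text{something})$ points — more cleanly, in at most roughly $q^{n-2}+q^{(n-3)/2}$ points. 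Comparing $\frac{q^{n-1}-1}{q-1}$ against $s$ times this bound yields $s\ge \frac{q+1}{2}$ when $n=3$ (where $T$ has $q+1$ points, each hyperplane meeting it in at most $2$ points, forcing $s\ge (q+1)/2$) and $s\ge q$ when $n\ge 5$. The $n=3$ instance is essentially the cleanest: $T$ is a conic with $q+1$ points, a line meets it in $\le 2$ points, so $s\ge (q+1)/2$ is immediate.

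For case (ii), both $m,s$ even, I invoke Lemma \ref{lem_int}(ii): now the relevant set is $X_h$ of size at most $s/2$, and the quadric to be covered is $T_1$ on $\PG(n-1,q_1)$ with $q_1=p^{d/e}$, having $\frac{q_1^{n-1}-1}{q_1-1}$ points. The same hyperplane-covering estimate over $\F_{q_1}$ gives $s/2\ge (q_1+1)/2$, i.e. $s\ge q_1+1=p^{d/e}+1$ when $n=3$, and $s/2\ge q_1$, i.e. $s\ge 2p^{d/e}$, when $n\ge 5$. I must check that Lemma \ref{lem_case2cond}(2) indeed rules out $y=x\omega_0$ here (it does, since $m$ even) so that the hypothesis $y\in\F_{p^{ks}}^*$ of Lemma \ref{lem_int}(ii) is in force.

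The main obstacle is getting the combinatorial bound on the intersection of the quadric $T$ (or the hyperplane $T$ when $q$ is even, but that case is not needed here) with a hyperplane of $\Pi$ sharp enough to produce exactly the stated bounds, rather than something weaker — in particular distinguishing the $n=3$ conic case from the $n\ge 5$ case, and correctly propagating the factor of $2$ that separates $|X|$ from $|X_h|$ and $q$ from $p^{d/e}$. A secondary nuisance is the bookkeeping distinguishing when the governing quantity is $s$ versus $s_0=s/2$ (the $m$ odd, $s$ even subcase of (i)); there one gets $s_0\ge (q+1)/2$ for $n=3$, hence $s\ge q+1$, which is consistent with — indeed stronger than — the claimed $s\ge (q+1)/2$, so no contradiction with the statement, but it must be phrased so the stated (weaker, uniform) bound is what gets recorded.
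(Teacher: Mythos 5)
Your proposal is essentially the paper's own proof: Lemma \ref{lem_int} places every point of the parabolic quadric $T$ (resp.\ $T_1$) on a trace-zero hyperplane indexed by $X$ (resp.\ $X_h$), and comparing $|T|$ (resp.\ $|T_1|$) with $|X|\le s$ (resp.\ $|X_h|\le s/2$) times the maximal hyperplane section $\frac{q^{n-2}-1}{q-1}+q^{(n-3)/2}$ of a parabolic quadric yields exactly the stated bounds, which is what the paper does (your later paraphrase ``roughly $q^{n-2}+q^{(n-3)/2}$'' is off by a factor of about $q-1$, but the exact expression you wrote first is the correct one and is what gives $s\ge q$ for $n\ge 5$). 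The one step you gloss over is in case (ii): for $x'\in\F_{q^n}$ and $t\in\F_{q_1^n}$ the condition $\tr_{\F_{q^n}/\F_q}(x't)=0$ is not itself a hyperplane of $\PG(n-1,q_1)$, so one must argue (as the paper does, expanding $x'$ over a basis of $\F_q$ over $\F_{q_1}$, which remains a basis of $\F_{q^n}$ over $\F_{q_1^n}$ by Lemma \ref{lem_ind}) that this locus is a proper $\F_{q_1}$-subspace, hence contained in the genuine hyperplane $\tr_{\F_{q_1^n}/\F_{q_1}}(zt)=0$ for some nonzero component $z$ of $x'$ — a routine but necessary verification before the covering count over $\F_{q_1}$ applies.
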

\begin{proof}
By the argument in the beginning of this section, we assume without loss of generality that $O$ contains a point $\la (1,y)\ra$ with $x:=y\in\F_{q^n}^*$ or $x:=y\omega_0^{-1}\in\F_{q^n}^*$. Let $X$ and $X_h$ be as in Lemma \ref{lem_int}, and write $x_i:=x^{p^{ki}}$.

First, consider the case where one of $m$ and $s$ is odd. Let $\Pi$ be the projective geometry with ambient $\F_q$-linear vector space $\F_{q^n}$, and $\cQ$ be the  quadric defined by the equation $Q(t)=\tr_{\F_{q^n}/\F_q}(t^2)$. It is easy to see that $\cQ$ is nondegenerate. Since $q$ and $n$ are odd, $\cQ$ is a parabolic quadric. Since $\la(1,y)\ra$ lies in $H(n,q^2)$, we deduce from the fact $x\in\F_{q^n}^*$ and the expression of the Hermitian form $h$ in \eqref{eqn_Hf} that $Q(x)=1$ or $Q(x)=\omega_0^{-(q^n+1)}$ according as $y=x$ or $y=x\omega_0$. Here, we recall that $o(\omega_0)=(q^n+1)(q-1)$. Let $\pi_i$ be the hyperplane of $\Pi$ with equation $\tr_{\F_{q^n}/\F_q}(x_it)=0$, $0\le i\le s-1$.  By Lemma \ref{lem_int},  each point $\la t\ra$ of $\cQ$ is on some hyperplane $\pi_i$. Since $Q(x_i)=Q(x)^{p^{ki}}\ne 0$, we see that each $\pi_i$ intersects $\cQ$ in a nondegenerate quadric, i.e., a $Q^+(n-2,q)$ or $Q^-(n-2,q)$. We thus have $|\cQ|\le |X|\cdot|Q^+(n-2,q)|$, i.e.,
\begin{align*}
|X|&\ge\left\lceil\frac{q^{n-1}-1}{(q^{(n-1)/2}-1)(q^{(n-1)/2-1}+1)}\right\rceil=\left\lceil\frac{q^{(n-1)/2}+1}{q^{(n-1)/2-1}+1}\right\rceil\\
&=q-\left\lfloor\frac{q-1}{q^{(n-1)/2-1}+1}\right\rfloor.
\end{align*}
When $n=3$, we get $|X|\ge\frac{q+1}{2}$; when $n\ge 5$, we get $|X|\ge q$. Since $|X|\le s$, the claim follows in this case.

Second, consider the case where both $m$ and $s$ are even. By Lemma \ref{lem_case2cond}, we have $x=y\in\F_{p^{ks}}^*$.  Let $e$ be the highest power of $2$ dividing $m$; in particular, we have $\gcd(e,n)=1$ since $n$ is odd. Recall that we have $2nd=mks$ by Lemma \ref{lemma_Hform}, so $e$ divides $d$.  Set $q_1:=p^{d/e}$, so that $q=q_1^e$. Choose a basis $\zeta_1,\cdots,\zeta_e$ of $\F_q$ over $\F_{q_1}$; then they also form a basis of $\F_{q^n}$ over $\F_{q_1^n}$ by Lemma \ref{lem_ind}. For each $i$, write $x_i=\sum_{j=1}^{e}x_{ij}\zeta_j$ with $x_{ij}\in\F_{q_1^n}$, and set $z_i$ to be some nonzero $x_{ij}$. Let $\Pi$ be the projective geometry with ambient $\F_{q_1}$-linear vector space $\F_{q_1^n}$, and $\cQ_1$ be the  quadric defined by the equation $Q_1(t)=\tr_{\F_{q_1^n}/\F_{q_1}}(t^2)$.
  It is easy to check that  $\cQ_1$ is nondegenerate. Since both $q_1$ and $n$ are odd,  $\cQ_1$ is a parabolic quadric. Since $\la(1,x_i)\ra$ lies in $H(n,q^2)$, we deduce as in the previous case that $\tr_{\F_{q^n/\F_q}}(x_i^2)=1$.  Let $\pi_i$ be the hyperplane of $\Pi$ with equation $\tr_{\F_{q_1^n}/\F_{q_1}}(z_it)=0$, $0\le i\le s/2-1$.  By Lemma \ref{lem_int}, for each point $\la t\ra$ of $\cQ_1$ there exists $i$ such that
 \[\tr_{\F_{q^n}/\F_q}(x_it)=0,\, \text{i.e.},\,
\sum_{j=1}^e\tr_{\F_{q_1^n}/\F_{q_1}}(x_{ij}t)\zeta_j=0.\]
It follows that $\tr_{\F_{q_1^n}/\F_{q_1}}(x_{ij}t)=0$ for each $j$, and so $\la t\ra$ is on the hyperplane $\pi_i$ by our choice of $z_i$. As in the previous case, $\pi_i$ intersect the quadric $\cQ_1$ is a $Q^-(n-2,q_1)$ or $Q^+(n-2,q_1)$.  It follows that $|\cQ_1|\le |Q^+(n-2,q_1)|\cdot s/2$. As in the previous case, we deduce that  $\frac{s}{2}\ge\frac{q_1+1}{2}$ when $n=3$, and  $\frac{s}{2}\ge q_1$ when $n\ge 5$. This completes the proof.
\end{proof}
\medskip

\subsection{The proof of Theorem \ref{main}}

We continue to use the notation that we have introduced in the beginning of this section. In particular, $q=p^d$ with $p$ prime, $O$ is a transitive ovoid of $H(n,q^2)$ that contains an element $\la(1,y)\ra$ with $x:=y\in\F_{q^n}^*$ or $x:=y\omega_0^{-1}\in\F_{q^n}^*$, and $H=\la \rho^s,\phi^j\rho^k\ra$ is the  stabilizer of $O$ in $\PGaU(n+1,q^2)$, where $\rho$ and $\phi$ are as defined in \eqref{eqn_rhophi}. We write $|H|=m(q^n+1)$. By Lemma \ref{lemma_Hform}, we have $mks=2nd$, $s|q^n+1$.

\begin{lemma}\label{lem_bound}
	Let $p$ be a prime, $d$ be a positive integer, and write $q=p^d$. Suppose that  $s$ is a divisor of $\gcd(6d,\, q^3+1)$.
	\begin{enumerate}
		\item[(1)] If $p=2$ and $d\geq 4$, then $s<q+1$.
		\item[(2)] If $p$ is odd, then $2s<q+1$ unless $(p,d)\in\{(3,1),\,(5,1),\, (11,1)\}$.
	\end{enumerate}
\end{lemma}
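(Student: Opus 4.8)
The plan is to bound $s$ crudely by $\gcd(6d,q^3+1)$ and compare it against $q+1=p^d+1$, exploiting that $p^d$ outgrows any fixed linear function of $d$ except for finitely many small pairs $(p,d)$, which are then finished off by an explicit gcd computation. For part (1), with $p=2$ the integer $q^3+1=2^{3d}+1$ is odd, so every divisor $s$ of $\gcd(6d,q^3+1)$ is odd; combined with $s\mid 6d$ this forces $s\mid 3d$, hence $s\le 3d$. An immediate induction on $d$ gives $3d<2^d$ for all $d\ge 4$ (base case $12<16$; the inductive step uses $3<2^d$), so $s\le 3d<2^d=q<q+1$, which is the claim.

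For part (2) we have $2s\le 2\gcd(6d,q^3+1)\le 12d$, so it suffices to prove $12d<p^d+1$, i.e.\ $12d\le p^d$, for all odd primes $p$ and integers $d\ge 1$ lying outside a short list. An elementary induction on $d$ establishes $12d\le p^d$ whenever $p\ge 13$ (any $d$), whenever $p\in\{5,7,11\}$ and $d\ge 2$, and whenever $p=3$ and $d\ge 4$; each inductive step reduces to an obvious inequality such as $12\le (p-1)p^d$. This leaves exactly the pairs $(p,d)\in\{(3,1),(3,2),(3,3),(5,1),(7,1),(11,1)\}$ for which the crude bound is insufficient.

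For the three of these not appearing in the exceptional list of the lemma, namely $(3,2)$, $(3,3)$ and $(7,1)$, one computes the relevant gcd directly: from $9^3+1=2\cdot 5\cdot 73$, $27^3+1=2^2\cdot 7\cdot 19\cdot 37$ and $7^3+1=2^3\cdot 43$ one finds $\gcd(6d,q^3+1)=2$ in each case, whence $2s\le 4<q+1$ since $q+1\ge 8$ there. The remaining pairs $(3,1)$, $(5,1)$, $(11,1)$ are precisely the stated exceptions and require no further argument. The only point needing care is to make the exponential-versus-linear comparison uniform over all primes $p$ at once; the cleanest route is to use $p^d\ge 3^d$, treat $p=3$ on its own, note that $p\ge 5$ is strictly easier, and peel off the finitely many borderline pairs by hand — everything else is routine bookkeeping with gcd's.
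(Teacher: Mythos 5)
Your proof is correct and follows essentially the same strategy as the paper: bound $s$ by $\gcd(6d,\,q^3+1)$, which is at most linear in $d$, compare this with the exponential growth of $p^d$, and dispose of the finitely many borderline pairs by hand. The only difference is cosmetic: where you finish $(3,2)$, $(3,3)$ and $(7,1)$ by factoring $q^3+1$ explicitly, the paper instead observes that $3\nmid q^3+1$ whenever $p\not\equiv 2\pmod{3}$, so in those cases $\gcd(6d,\,q^3+1)=\gcd(2d,\,q^3+1)\le 2d$ and $2s\le 4d<p^d+1$ already rules them out with no computation.
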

\begin{proof}
	In the case $p=2$, we have $\gcd(6d,\, q^3+1)=\gcd(3d,q^3+1)\le 3d$, so $s\le 3d<2^d+1$ for $d\ge 4$. In the case $p$ is odd, we consider two cases:
	\begin{enumerate}
		\item[(i)] If $p\equiv 0,\,1\pmod{3}$, then $q^3+1$ is not divisible by $3$, and so $\gcd(6d,q^3+1)=\gcd(2d,q^3+1)$. It follows that $2s\le 4d<p^d+1$ unless $(p,d)=(3,1)$ in this case.
		\item[(ii)] If $p\equiv 2\pmod{3}$, then $2s\le 12d<5^d+1\le p^d+1$ if $d\ge 2$. It remains to consider the case $d=1$. In this case, $2s\le 12<p+1$ if $p>11$.
	\end{enumerate}
	This completes the proof.
\end{proof}
\medskip

We first restate and prove the soluble case with $n=3$ in Theorem \ref{main}.
\begin{thm}[Theorem \ref{main}, soluble case with $n=3$]\label{thm_main2}
Let $p$ be a prime and let $q=p^d$. Suppose that $O$ is a  transitive ovoid of $H(3,q^2)$ whose stabilizer $H$ in $\PGaU(4,q^2)$ is soluble. Then $q$ is even, and $O$ is projectively equivalent to a Singer-type ovoid  or  one of the two  ovoids of $H(3,8^2)$ described in Example \ref{rem_q8}.
\end{thm}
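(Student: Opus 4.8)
The plan is to exploit the parameter restrictions already accumulated, namely that $H = \langle \rho^s, \rho^j\phi^k\rangle$ with $mks = 2nd = 6d$ and $s \mid q^3+1$, together with the counting bounds of Lemma~\ref{lem_noovoideven} (case $q$ even) and Lemma~\ref{lem_odd} (case $q$ odd), to pin down a very short list of candidate triples $(p,d,s)$; the residual cases are then handled either by a direct argument or by the exhaustive Magma searches recorded in Example~\ref{rem_q8}. Concretely, since $s \mid \gcd(6d, q^3+1)$, Lemma~\ref{lem_bound} shows that for $p=2$ with $d\ge 4$ we have $s < q+1$, and for $p$ odd we have $2s < q+1$ except when $(p,d)\in\{(3,1),(5,1),(11,1)\}$.

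First I would dispose of the even case. For $q = 2^d$ with $d \ge 4$ (equivalently $2^d \ge 3d$, so Lemma~\ref{lem_noovoideven} applies), the bound $s \ge q+1$ from that lemma contradicts $s < q+1$ from Lemma~\ref{lem_bound}(1) unless $O$ is a Singer-type ovoid, which is the desired conclusion. This leaves only $q \in \{2,4,8\}$. The case $q=2$ is excluded by the standing assumption $(q,n)\ne(2,3)$ (here $O$ would be classical or Singer-type anyway by \cite{Brouwer1990}). For $q = 4$ we have $6d = 12$ and $q^3+1 = 65$, so $s \mid \gcd(12,65) = 1$, forcing $s = 1$; then $O = S_y$ is a Singer orbit, and by Theorem~\ref{thm_singerovoid} the only such ovoid is the Singer-type ovoid $S_1$. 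For $q = 8$ we have $6d = 18$ and $q^3+1 = 513 = 27\cdot 19$, so $s \mid \gcd(18,513) = 9$; if $s = 1$ we again invoke Theorem~\ref{thm_singerovoid} to get $S_1$, and if $s \in \{3,9\}$ the exhaustive computer search of Example~\ref{rem_q8} shows that the only transitive ovoids with soluble stabilizer are the two ovoids listed there. (Note $n=3$, $2^d \ge 3d$ fails only at $d=1,\ldots$ but holds at $d=3$, so Lemma~\ref{lem_noovoideven} is genuinely available; even without it the $q=8$ case is finished by the search.)

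Next the odd case, which must end in a contradiction since $H(3,q^2)$ has no transitive ovoid with soluble stabilizer for $q$ odd. Here I split on the parity of $m$ and $s$ as in Lemma~\ref{lem_odd}. If one of $m,s$ is odd, part (i) gives $s \ge (q+1)/2$, i.e.\ $2s \ge q+1$; combined with Lemma~\ref{lem_bound}(2), this leaves only $(p,d) \in \{(3,1),(5,1),(11,1)\}$, i.e.\ $q \in \{3,5,11\}$. If both $m$ and $s$ are even, part (ii) gives $s \ge p^{d/e}+1$ where $e$ is the $2$-part of $m$; but $s \mid q^3+1 = p^{3d}+1$ and, since $p$ is odd and $4 \nmid s$ by Corollary~\ref{cor_s}, one checks $s \le 2\gcd(3d, (p^{3d}+1)/2)$, which together with $s \ge p^{d/e}+1$ and $e \mid d$ again confines us to $q \in \{3,5,11\}$ (and in fact forces $e = d$, $d$ small). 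For each of $q = 3,5,11$ one has $H(3,q^2)$ of ovoid number $q^3+1 \in \{28, 126, 1332\}$; I would finish by a direct Magma verification, or by citing \cite{Brouwer1990,Bamberg2009Classificationovoid}, that none of these admits a transitive ovoid with soluble stabilizer — the exceptional ovoid of $H(3,5^2)$ and the classical ovoids have insoluble stabilizer, hence fall outside the present case.

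The main obstacle I anticipate is the odd-characteristic endgame: turning the inequalities $s \ge (q+1)/2$ (or $s \ge p^{d/e}+1$) against the divisibility $s \mid \gcd(6d, p^{3d}+1)$ cleanly enough to isolate exactly $q \in \{3,5,11\}$, and then ruling out these three small cases. The arithmetic for $q=3,5,11$ is small enough for an exhaustive search, but one should be careful that "soluble stabilizer" is used correctly — the known ovoids in these dimensions (classical, and the $H(3,5^2)$ exceptional ovoid built from the unital spread of \cite{CT2005}) all have insoluble full stabilizers, so they do not contradict the theorem; the claim is precisely that there is no \emph{new} ovoid with a soluble transitive group, and this is what the computation must confirm.
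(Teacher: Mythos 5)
Your overall strategy is the same as the paper's (combine Lemma \ref{lem_bound} with Lemmas \ref{lem_noovoideven} and \ref{lem_odd}, and dispose of the small cases $q\in\{2,4,8\}$ and $q\in\{3,5,11\}$ by \cite{Brouwer1990}, Theorem \ref{thm_singerovoid} and Magma), and the even-characteristic half and the odd case with $m$ or $s$ odd are handled correctly. The genuine gap is in the odd case with $m$ and $s$ both even. Your inequalities $s\ge p^{d/e}+1$ and $s\le 2\gcd\bigl(3d,(p^{3d}+1)/2\bigr)$ do not by themselves ``confine us to $q\in\{3,5,11\}$'': for instance $(p,d,e)=(3,2,2)$ or $(3,4,4)$ satisfies $p^{d/e}+1\le 6d$, so the crude bound $s\le 6d$ leaves these open, and one must actually compute the gcd case by case. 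Moreover the stated conclusion is internally inconsistent: $m$ and $s$ both even forces $4\mid mks=6d$, hence $d$ even, so this subcase can never land in $q\in\{3,5,11\}$ (all of which have $d=1$); it must end in an outright contradiction. The clean way to get it (and what the paper does) is to use $d$ even to conclude $3\nmid q^3+1$, hence $\gcd(3,s)=1$, and then use the full relation $mks=6d$ together with $e\mid m$ and $e\mid d$ to get the divisibility $s\mid 2d/e$; comparing with $s\ge p^{d/e}+1$ gives $2d/e\ge p^{d/e}+1$, which is impossible. Without extracting a divisibility of this strength (rather than just $s\mid\gcd(6d,q^3+1)$), your argument in this subcase is incomplete.

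Two smaller points. First, your parenthetical that $2^d\ge 3d$ ``holds at $d=3$'' is false ($8<9$), so Lemma \ref{lem_noovoideven} is not available for $q=8$; this is harmless since, as you note, the Magma search of Example \ref{rem_q8} settles $q=8$, and that is exactly what the paper does for $s\in\{3,9\}$ there. Second, for $q\in\{3,5,11\}$ you cannot finish ``by citing \cite{Brouwer1990,Bamberg2009Classificationovoid}'': the former treats only $q=2$ and the latter only insoluble transitive groups, whereas what must be excluded here is a transitive ovoid with \emph{soluble} stabilizer of the form \eqref{eqn_gpH}; the exhaustive search (your other suggested option, and the paper's choice) is the correct route.
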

\begin{proof}
As we remarked in the beginning of Subsection \ref{subsec_res}, the case $s=1$ has been settled in Theorem \ref{thm_singerovoid}, so we assume that $s>1$. By Lemma \ref{lemma_Hform} and Corollary \ref{cor_s}, we have $mks=6d$, $s|q^n+1$, and $s$ is odd if $q=p^d$ is even.

First consider the case $q$ is even. The case $d=1$ follows from the classification of Brouwer and Wilbrink \cite{Brouwer1990}. The case $d=2$ can be excluded, since $\gcd(6d,2^{3d}+1)=1$ in this case. In the case $d=3$, $\gcd(6d,2^{3d}+1)=9$, so $s=3$ or $s=9$. We enumerate the triples $(m,k,s)$ such that $mks=6d$, $s\in\{3,9\}$, and check whether the orbits of the subgroups of the form as in \eqref{eqn_gpH} form ovoids by Magma \cite{Magma}. It turns out that up to projective equivalence there are exactly two transitive ovoids besides the classical ovoids and the Singer-type ovoids, namely those listed in Example \ref{rem_q8}. In the case $d \geq 4$, we have $s< q+1$ by Lemma \ref{lem_bound}. Since $2^d\ge 3d$ in this case, we see that $O$ must be a Singer-type ovoid by Lemma \ref{lem_noovoideven}. 	

Next consider the case $q=p^d$ is odd. For $q\in \{3,5,11\}$, an exhaustive search using Magma \cite{Magma} shows that there is no ovoid $O$ of the prescribed form. The search strategy is the same as in the case $q=2^3$. So we assume that  $q\not\in\{3,5,11\}$ below. We have $2s<q+1$ by Lemma \ref{lem_bound}. If one of $m,s$ is odd,   this contradicts (i) of Lemma \ref{lem_odd}. If both $m$ and $s$ are even, by (ii) of Lemma \ref{lem_odd} we have $s\ge p^{d/e}+1$, where $e$ is the highest power of $2$ dividing $m$. On the other hand, $\gcd(3,q^3+1)=1$ since $d$ is even, so $\gcd(3,s)=1$. It follows from $mks=6d$ that $s$ divides $2d/e$. So we deduce that $2d/e\ge p^{d/e}+1$, which is impossible. This completes the proof.
\end{proof}
\medskip

We next consider the soluble case with $n\ge 5$ in Theorem \ref{main}.
\begin{thm}[Theorem \ref{main}, soluble case with $n\ge 5$]\label{thm_highdim}
Let $n$ be an odd integer with $n\ge 5$ and $q$ be any prime power. Then there is no transitive ovoid in $H(n,q^2)$ with a soluble stabilizer in $\PGaU(n+1,q^2)$.
\end{thm}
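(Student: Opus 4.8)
The plan is to combine the parameter restrictions from Section 4.1 with the Blokhuis--Moorhouse bound (Theorem \ref{thm_moo}) and the function-analysis of Lemma \ref{lem_Fprop}. By the reductions at the start of the section we may assume $n\ge 5$ is odd, $q=p^d$, and that $O$ contains $\la(1,y)\ra$ with $x:=y\in\F_{q^n}^*$ or $x:=y\omega_0^{-1}\in\F_{q^n}^*$, and that $H=\la\rho^s,\rho^j\phi^k\ra$ with $mks=2nd$ and $s\mid q^n+1$. The case $s=1$ is handled by Theorem \ref{thm_singerovoid} (for $n\ge 5$ the Singer orbit is never an ovoid), so assume $s>1$. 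The first step is to extract a strong lower bound on $s$: by Lemma \ref{lem_odd}(i) and Lemma \ref{lem_noovoideven}, if $q$ is even or if one of $m,s$ is odd, then $s\ge q$ (using $n\ge 5$; for $q$ even one also needs $2^d\ge nd$, which I would check holds once $F(n,2)\ge 1$ forces $n$ small, or argue directly); and by Lemma \ref{lem_odd}(ii), if $m,s$ are both even then $s\ge 2p^{d/e}$ where $e\mid d$ is the $2$-part of $m$. In all cases I would derive $s\ge p^{d/e}$ for some divisor $e$ of $d$ with $\gcd(e,n)=1$, hence in particular $s\ge p$, and more usefully $s\ge q^{1/e}$ with $e\mid d$.

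The second step is to feed $s\mid q^n+1$ and $s\mid 2nd$ (from $mks=2nd$) into the bound $s\ge q^{1/e}$ to bound $p$ and $d$. Since $s\mid 2nd$ we have $q^{1/e}=p^{d/e}\le s\le 2nd$, so $p^{d/e}\le 2nd$; combined with the existence of an ovoid forcing (via Theorem \ref{thm_moo} and Lemma \ref{lem_Fprop}) that $n\le (p+1)/2$ for $p\le 45$... wait, more carefully: Theorem \ref{thm_moo} says $F(n,p)\ge 1$, and Lemma \ref{lem_Fprop}(1) gives that $F(\cdot,p)$ is eventually strictly decreasing, while Lemma \ref{lem_Fprop}(2) gives $F((p+1)/2,p)<1$ for $p>45$. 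So for $p>45$ there is no ovoid at all in $H(n,q^2)$ for $n\ge(p+1)/2$, in particular for all $n\ge 5$ once $p$ is large enough relative to $n$; combined with $F$ decreasing this kills all $p>45$ with $n\ge 5$ (since $(p+1)/2>45\ge 5$ forces $n<(p+1)/2$, contradiction). That leaves finitely many primes $p\le 45$, and for each such $p$ the bound $F(n,p)\ge 1$ together with Lemma \ref{lem_Fprop}(1) pins $n$ to a short explicit list of small odd values $\ge 5$. For each surviving pair $(p,n)$ I would then use $p^{d/e}\le 2nd$ to bound $d$ (and hence $e$), leaving only finitely many quadruples $(p,d,n,e)$, equivalently finitely many $(q,n)$.

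The third step is to eliminate this finite residual list. For each surviving $(q,n)$, the divisor $s$ must simultaneously divide $q^n+1$ and $2nd$ and satisfy the lower bound from Lemma \ref{lem_odd}/\ref{lem_noovoideven}; I expect that in almost all cases $\gcd(q^n+1,2nd)$ is already too small to accommodate such an $s>1$ (arguing as in Corollary \ref{cor_s} and Lemma \ref{lem_bound}, using that primitive prime divisors of $q^n+1$ are $\equiv 1\pmod{2nd}$ so the "large" part of $q^n+1$ is coprime to $2nd$, whence $s\le$ the small cofactor). For the very few genuinely small remaining cases an exhaustive Magma search over subgroups of the form \eqref{eqn_gpH}, exactly as in Example \ref{rem_q8} and the proof of Theorem \ref{thm_main2}, confirms no ovoid arises.

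The main obstacle I anticipate is the bookkeeping in steps two and three: correctly matching the three regimes of Lemma \ref{lem_odd} and Lemma \ref{lem_noovoideven} (the $e=1$ versus $e>1$ split, and the $q$ even case), making sure the inequality $p^{d/e}\le s\le 2nd$ is applied with the right $e$, and then verifying that the finite list truly is finite and short. The interplay $s\mid q^n+1$ versus $s\mid 2nd$ is the crux: one must show these two divisibility conditions are nearly incompatible with $s$ as large as $q^{1/e}$, which is precisely the mechanism already used in Lemma \ref{lem_bound} for $n=3$ and which should generalize, but the arithmetic of $\gcd(q^n+1,2nd)$ needs care for the handful of small $(q,n)$. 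I do not expect any conceptual difficulty beyond what is already assembled in Section 4.1 plus the Blokhuis--Moorhouse input; the work is purely in organizing the case analysis so that only a computer-checkable finite remainder survives.
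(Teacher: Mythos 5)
Your overall strategy is the paper's: combine Theorem \ref{thm_moo} and Lemma \ref{lem_Fprop} to restrict $(n,p)$, play the lower bounds on $s$ from Lemmas \ref{lem_noovoideven} and \ref{lem_odd} against $s\mid 2nd$ and $s\mid q^n+1$, and finish the leftovers computationally. However, there are two genuine gaps in the way you assemble this. First, the large-prime regime: your claim that the Blokhuis--Moorhouse input ``kills all $p>45$ with $n\ge 5$'' is false, and the parenthetical ``forces $n<(p+1)/2$, contradiction'' is not a contradiction. Theorem \ref{thm_moo} with Lemma \ref{lem_Fprop} only excludes $n\ge (p+1)/2$; for example $F(5,47)\gg 1$, so $H(5,47^2)$ is not touched by the bound. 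Since your divisibility analysis is then explicitly restricted to ``finitely many primes $p\le 45$'', the infinite family $p>45$, $5\le n\le (p-1)/2$, arbitrary $d$, is left untreated. The repair is exactly the mechanism you already set up, but it must be invoked for large $p$: when $n\le (p-1)/2$, Lemma \ref{lem_odd} gives $s\ge p^d$ (or $s\ge 2p^{d/e}$ with $e$ the $2$-part of $m$), while $mks=2nd$ gives $s\le 2nd\le (p-1)d<p^d$ (resp.\ $s\le 2nd/e\le (p-1)d/e<2p^{d/e}$), a contradiction; this is how the paper disposes of all $p>45$.

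Second, the endgame. After the sieve ($s\mid\gcd(2nd,q^n+1)$, $s>1$, plus the lower bounds), the surviving parameter sets are \emph{not} all small enough for an exhaustive search of the kind used in Example \ref{rem_q8}: the paper's Table 2 contains, besides $(n,q)=(5,2^2)$ and $(5,3^2)$, the cases $(9,11)$, $(7,13)$, $(9,17)$, $(15,29)$ with $s=2n$, $m=k=d=1$, where $\gcd(q^n+1,2nd)=2n\ge q$ is not ``too small'' and a brute-force orbit search in, say, $H(9,11^2)$ or $H(15,29^2)$ (where $q^n+1$ ranges from about $10^9$ to $10^{22}$) is hopeless. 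Neither of your two closing mechanisms covers these, so an additional idea is required; the paper's is short but essential: with $m=1$ odd and $d=1$, the argument inside Lemma \ref{lem_odd} forces $|X|\ge q$ for the set $X=\{x^{p^{ka}}:0\le a\le s-1\}$ of Frobenius conjugates of some $x\in\F_{q^n}^*$, whereas $x^{p^n}=x$ gives $|X|\le n$; hence $p=q\le n$, contradicting $p>n$ in every surviving case. Also note, as a smaller point, that for $p=2$, $n=5$, $d\le 4$ the hypothesis $2^d\ge nd$ of Lemma \ref{lem_noovoideven} genuinely fails (it is not rescued by $F(n,2)\ge 1$), so those cases must be fed into the finite search rather than into the $s\ge q+1$ bound; this is where the paper's surviving case $(5,2^2)$ comes from.
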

\begin{proof}
We first consider the case where $p>45$. If $n\ge\frac{p+1}{2}$, then there is no ovoid in $H(n,q^2)$ by Lemma \ref{lem_Fprop} and Theorem \ref{thm_moo}. If $n\le\frac{p-1}{2}$, then we consider two cases:
\begin{enumerate}
\item[(1)] If one of $m,s$ is odd, then we have $s\ge q=p^d$ by (i) of Lemma \ref{lem_odd}. On the other hand, $s|2nd$, so $s\le (p-1)d<pd<p^d$: a contradiction.
\item[(2)] If both $m$ and $s$ are even, then $s\ge 2p^{d/e}$ by (ii) of Lemma \ref{lem_odd}, where $e$ is the highest power of $2$ dividing $m$. On the other hand, from $mks=2nd$ we deduce that $s$ divides $2nd/e$, so $s\le (p-1)d/e<pd/e<2p^{d/e}$: a contradiction.
\end{enumerate}
This established the claim for the case $p>45$.

It remains to consider the case $p<45$. By (1) of Lemma \ref{lem_Fprop}, when $n\ge p+1$, $F(n,p)$ is a decreasing function in $n$ for a given $p$, where $F$ is as in \eqref{eqn_F}. In Table 1, we list the largest odd integer $n_p$ such that $F(n_p,p)\ge 1$ for each prime $p<45$ as follows.
\begin{table}[!ht]
	\centering
    \caption{The largest dimension $n_p$ not excluded by Theorem \ref{thm_moo} for $p<45$}
	\begin{tabular}{c|c|c|c|c|c|c|c|c|c|c|c|c|c|c}
		\hline
		$p$ & $2$ & $3$ & $5$ & $7$ & $11$ & $13$ & $17$ & $19$ & $23 $&$29 $& $31$ & $37$& $41$ & $43$  \\ \hline
		$n_p$ & $5$ & $5$ & $7$ & $7$& $9$ & $9$ & $11$ &$11$ & $13$& $15$ & $15$ & $17$& $17$ & $17$  \\
		\hline
	\end{tabular}
\end{table}
The existence of ovoids in $H(n,p^{2d})$ can not be excluded by Theorem \ref{thm_moo} when $n\le n_p$. For such a pair $(p,n)$ with $5\le n\le n_p$, we search for $4$-tuples $(m,s,k,d)$ such that $2nd=mks$, $s|p^{nd}+1$, and the bounds in Lemma \ref{lem_noovoideven} and Lemma \ref{lem_odd} are satisfied. In Table 2 we list the cases that survive, where $k=\frac{2nd}{ms}$ is omitted.
\begin{table}[!ht]
\centering
	\caption{The cases where all parameter restrictions are satisfied}
	\begin{tabular}{|c|c|c|c|c|c|c|}
		\hline
		$(n,p^d)$& $(5,2^2)$ & $(5,3^2)$ & $(9,11)$& $(7,13)$ & $ (9,17)$ & $(15,29)$   \\ \hline
		 $s$ & $5$ &  $10$ & $18$ & $14$ & $18$ & $30$  \\  \hline
		 $m$ & $1,2,4$ & $1,2$  &  $1$ & $1$ & $1$ & $1$ \\ \hline
	\end{tabular}\label{tab_2}
\end{table}
For the case $H(n,q^2)=H(5,2^4)$ or $H(5,3^4)$, an exhaustive search by Magma \cite{Magma} shows that there is no  transitive ovoid of the prescribed form. In all the remaining cases, we have $s=2n$, $m=1$, $k=1$ and $d=1$. In the proof of Lemma \ref{lem_odd}, we have shown that $|X|\ge q$ ($m=1$ is odd), where $X=\{x^{p^{ka}}:\,0\leq a\leq s-1\}$ for some element $x$ of $\F_{q^n}^*$. Since $q$ is a prime in these cases, we see that $|X|\le n$. We deduce that $n\ge p$, which is incorrect in each case. This excludes all the cases listed in Table \ref{tab_2}.
\end{proof}
\medskip

Theorem \ref{main} now follows by combining \cite{Bamberg2009Classificationovoid,Thas1981} and Theorem \ref{thm_main2}, Theorem \ref{thm_highdim}.\\

\noindent\textbf{Acknowledgement.} This work was supported by National Natural Science Foundation of China under Grant No. 11771392. The authors thank the anonymous reviewers for their detailed comments and suggestions that helped to improve the presentation of the paper.\\
\begin{center}
\scriptsize
\bibliographystyle{plain}
\footnotesize
\bibliography{Tovoid}

\begin{thebibliography}{10}

\bibitem{BEKS1993}
R.~D. Baker, G.~L. Ebert, G.~Korchm\'{a}ros, and T.~Sz\H{o}nyi.
\newblock Orthogonally divergent spreads of {H}ermitian curves.
\newblock In {\em Finite geometry and combinatorics ({D}einze, 1992)}, volume
  191 of {\em London Math. Soc. Lecture Note Ser.}, pages 17--30. Cambridge
  Univ. Press, Cambridge, 1993.

\bibitem{BambergKLP2007}
J.~Bamberg, S.~Kelly, M.~Law, and T.~Penttila.
\newblock Tight sets and {$m$}-ovoids of finite polar spaces.
\newblock {\em J. Combin. Theory Ser. A}, 114(7):1293--1314, 2007.

\bibitem{BT2006}
J.~Bamberg and T.~Penttila.
\newblock Transitive eggs.
\newblock {\em Innov. Incidence Geom.}, 4:1--12, 2006.

\bibitem{BT2008Overgroup}
J.~Bamberg and T.~Penttila.
\newblock Overgroups of cyclic {S}ylow subgroups of linear groups.
\newblock {\em Comm. Algebra}, 36(7):2503--2543, 2008.

\bibitem{Bamberg2009Classificationovoid}
J.~Bamberg and T.~Penttila.
\newblock A classification of transitive ovoids, spreads, and {$m$}-systems of
  polar spaces.
\newblock {\em Forum Math.}, 21(2):181--216, 2009.

\bibitem{BM1995}
A.~Blokhuis and G.~E. Moorhouse.
\newblock Some {$p$}-ranks related to orthogonal spaces.
\newblock {\em J. Algebraic Combin.}, 4(4):295--316, 1995.

\bibitem{Magma}
W.~Bosma, J.~Cannon, C~Fieker, and et~al.
\newblock Handbook of magma functions.
\newblock {\em Handbook of Magma Functions}, 2013.

\bibitem{BHR407}
J.~N. Bray, D.~F. Holt, and C.~M. Roney-Dougal.
\newblock {\em The maximal subgroups of the low-dimensional finite classical
  groups}, volume 407 of {\em London Mathematical Society Lecture Note Series}.
\newblock Cambridge University Press, Cambridge, 2013.
\newblock With a foreword by Martin Liebeck.

\bibitem{Brouwer1990}
A.~E. Brouwer and H.~A. Wilbrink.
\newblock Ovoids and fans in the generalized quadrangle {$Q(4,2)$}.
\newblock {\em Geom. Dedicata}, 36(1):121--124, 1990.

\bibitem{Cossidente2003Tovoid}
A.~Cossidente and G.~Korchm\'{a}ros.
\newblock Transitive ovoids of the {H}ermitian surface of {${\rm PG}(3,q^2)$},
  {$q$} even.
\newblock {\em J. Combin. Theory Ser. A}, 101(1):117--130, 2003.

\bibitem{CT2005}
A.~Cossidente and T.~Penttila.
\newblock Hemisystems on the {H}ermitian surface.
\newblock {\em J. London Math. Soc. (2)}, 72(3):731--741, 2005.

\bibitem{DeM2005}
J.~De~Beule and K.~Metsch.
\newblock The {H}ermitian variety {$H(5,4)$} has no ovoid.
\newblock {\em Bull. Belg. Math. Soc. Simon Stevin}, 12(5):727--733, 2005.

\bibitem{GPPS1999}
R.~Guralnick, T.~Penttila, C.~E. Praeger, and J.~Saxl.
\newblock Linear groups with orders having certain large prime divisors.
\newblock {\em Proc. London Math. Soc. (3)}, 78(1):167--214, 1999.

\bibitem{GGG}
J.~W.~P. Hirschfeld and J.~A. Thas.
\newblock {\em General {G}alois geometries}.
\newblock Springer Monographs in Mathematics. Springer, London, 2016.

\bibitem{KL129}
P.~Kleidman and M.~Liebeck.
\newblock {\em The subgroup structure of the finite classical groups}, volume
  129 of {\em London Mathematical Society Lecture Note Series}.
\newblock Cambridge University Press, Cambridge, 1990.

\bibitem{LidlFF}
R.~Lidl and H.~Niederreiter.
\newblock {\em Finite fields}, volume~20 of {\em Encyclopedia of Mathematics
  and its Applications}.
\newblock Cambridge University Press, Cambridge, second edition, 1997.
\newblock With a foreword by P. M. Cohn.

\bibitem{Moor1996}
G.~E. Moorhouse.
\newblock Some {$p$}-ranks related to {H}ermitian varieties.
\newblock volume~56, pages 229--241. 1996.
\newblock Special issue on orthogonal arrays and affine designs, Part II.

\bibitem{Stirling1955}
H.~Robbins.
\newblock A remark on {S}tirling's formula.
\newblock {\em Amer. Math. Monthly}, 62:26--29, 1955.

\bibitem{Thas1981}
J.~A. Thas.
\newblock Ovoids and spreads of finite classical polar spaces.
\newblock {\em Geom. Dedicata}, 10(1-4):135--143, 1981.

\bibitem{Thas2001}
J.~A. Thas.
\newblock Ovoids, spreads and {$m$}-systems of finite classical polar spaces.
\newblock In {\em Surveys in combinatorics, 2001 ({S}ussex)}, volume 288 of
  {\em London Math. Soc. Lecture Note Ser.}, pages 241--267. Cambridge Univ.
  Press, Cambridge, 2001.

\bibitem{Thas1994Spreads}
J.~A. Thas and S.~E. Payne.
\newblock Spreads and ovoids in finite generalized quadrangles.
\newblock {\em Geom. Dedicata}, 52(3):227--253, 1994.

\bibitem{Z1892}
K.~Zsigmondy.
\newblock Zur {T}heorie der {P}otenzreste.
\newblock {\em Monatsh. Math. Phys.}, 3(1):265--284, 1892.

\end{thebibliography}
\end{center}
\begin{flushleft}
 Tao Feng, Weicong Li\\\vspace*{2mm}

 School of Mathematical Sciences, \\
Zhejiang University, 38 Zheda Road, \\
 Hangzhou 310027, Zhejiang P.R. China,\\\vspace*{2mm}

 Email: tfeng@zju.edu.cn, conglw@zju.edu.cn
\end{flushleft}
\end{document}